\documentclass[10pt,letterpaper]{amsart}
\usepackage{amssymb}
\usepackage{amsthm}
\usepackage{epsfig} 
\usepackage{epic,eepic}
\usepackage{xcolor}
\usepackage{comment}
\usepackage{enumitem}

\newtheorem{thm}{Theorem}[section]
\newtheorem{prop}[thm]{Proposition}

\newtheorem{cor}[thm]{Corollary}

\theoremstyle{definition}

\theoremstyle{remark}
\newtheorem{remark}[thm]{Remark}

\usepackage{hyperref}

\usepackage{esint}

\newcommand{\D}{\mathcal{D}}

\renewcommand{\S}{\mathcal{S}}
\newcommand{\R}{\mathbb{R}}
\newcommand{\p}{\partial}
\newcommand{\spt}{\operatorname{spt}}
\newcommand{\BUC}{\operatorname{BUC}}

\newcommand{\eps}{\epsilon}

\renewcommand{\tt}[1]{\text{#1}}

\numberwithin{equation}{section}

\begin{document}

\title{Stability of Vortex Patches in Channels}

 \author{Zelin Dong}
 \address{Department of Mathematics, The Chinese University Hong Kong, 
 Shatin, NT, Hong Kong SAR}
 \email{1155173731@link.cuhk.edu.hk}
% \urladdr{www.math.sc.edu/$\sim$howard} % Delete if not wanted.
 \author{Chenyun Luo}
 \address{Department of Mathematics, The Chinese University Hong Kong, 
 Shatin, NT, Hong Kong SAR}
 \email{cluo@math.cuhk.edu.hk}
\flushbottom
\maketitle
\thispagestyle{empty}

\begin{abstract}
In this paper, we investigate the orbital stability of vortex patches for the two-dimensional incompressible Euler equations in both a class of domains that satisfy the “weak finite volume condition” and a strip of arbitrary width. We establish that for suitable parameters \((\mu,\lambda)\), the penalized kinetic energy functional admits a minimizer, and that every such minimizer satisfies the elliptic equation \(\omega = \lambda(\psi - W x_2 - \gamma)_+\). Furthermore, we demonstrate that the set of minimizers is orbitally stable under the Eulerian dynamics. This work extends the variational framework developed by Abe and Choi \cite{abe2022stability} to domains that lack both spatial scaling invariance and horizontal translation invariance. The absence of these properties introduces substantial difficulties in the proof, as classical rearrangement and scaling arguments are no longer applicable. We overcome these obstacles by comparing the Green's function with that of the half-plane and exploiting the decay condition to formulate a concentration-compactness argument that ultimately yields the desired stability result.
\end{abstract}

\section{Introduction}

The two-dimensional Euler equation for the motion of an inviscid, incompressible fluid on a fixed domain $\Omega\subseteq \R^2_+$ is given in vorticity form by
\begin{align}
    \p_t \omega +u\cdot\nabla \omega&= 0, ~ \quad \omega(x,0)=\omega_0(x) \quad \tt{ in } \Omega.  \label{eqn:vorticity equation}
\end{align}
Here, $\omega$ is the vorticity of the flow, and the fluid velocity $u$ satisfies the slip boundary condition: 
\begin{align*}
    u\cdot \mathcal{N}=0,
\end{align*}
where \(\mathcal{N}\) is the unit outward normal. When the appropriate Biot-Savart law is applicable, $u$ is determined from $\omega$ in the sense that $$u(x,t)=\nabla^\perp \int_D G(x,y)\omega(y,t)dy,$$ and $G$ is the Green's function for the Dirichlet problem in $\Omega$. It will be convenient for us to define the stream function associated with $\omega$:
\begin{align}
    \psi(x,t)=\int_{\Omega} G(x,y) \omega(y,t)dy, \label{def:stream function}
\end{align}
so that $u(x,t)=\nabla^\perp \psi$. In addition, during this paper, when conducting a discussion at a fixed time, the time $t$ will be omitted, i.e., $\omega (\cdot, t) = \omega (\cdot), \psi (\cdot, t) = \psi (\cdot) $ and $u(\cdot, t) = u(\cdot) $.

% The two-dimensional incompressible fixed boundary Euler equations describe the motion of a fluid in two dimensions with a fixed boundary separating the fixed fluid region $D$ and the vacuum region. In the fluid region, the fluid velocity $u(t, x)$ and the pressure $p(t, x)$ satisfy the incompressible Euler equations with slip boundary condition:
% \begin{align}
%     \left\{
%     \begin{aligned}
%         \p_t u +u\cdot\nabla u+\nabla p &= 0, &&  \tt{ in } D, \\
%         \nabla\cdot u &= 0, && \tt{ in } D, \\
%         u \cdot \mathcal{N} &= 0, && \tt{ on } \partial D.
%     \end{aligned}\label{eqn:velocity equation}
%     \right.
% \end{align}
% where \(\mathcal{N}\) is the unit outward normal. When the Biot-Savart law hold, by applying \(\nabla^\perp\) to the velocity equation \eqref{eqn:velocity equation} and using the divergence-free condition, the vorticity \(\omega = \nabla^\perp \cdot u\) satisfies the transport equation
% \begin{align}
%     \left\{
%     \begin{aligned}
%         \p_t \omega +u\cdot\nabla \omega&= 0, &&  \tt{ in } D, \\
%         u&=\nabla^\perp(-\Delta)\omega, && \tt{ in } D, \\
%         u \cdot \mathcal{N} &= 0, && \tt{ on } \partial D.
%     \end{aligned}\label{eqn:vorticity equation}
%     \right.
% \end{align}

A natural question is whether solutions remain stable over long time intervals. In general, the answer is negative. For 2D Euler equations in a disk, Nadirashvili, Alexander, and Šverák \cite{kiselev2014small} constructed solutions whose vorticity gradient grows double‑exponentially. Similar mechanisms were exploited on the torus $\mathbb{T}^2$ by Zlato{\v{s}}, Andrej \cite{zlatovs2015exponential}, applied to smooth domains with an axis of symmetry by Xu \cite{xu2016fast}, and also for free-boundary problems by Hu, Luo, and Yao \cite{hu2024small}, demonstrating that instability can occur generically.

Nevertheless, stability can be established for certain domains with suitable classes of initial data. Cao and Wang \cite{cao2021nonlinear} proved nonlinear stability of highly concentrated vortex patches near non‑degenerate minima of the Robin function; Cao, Wan, and Wang \cite{cao2019nonlinear} showed orbital stability for kinetic‑energy‑maximising patches; Choi, Jeong, and Lim \cite{choi2022stability} studied the stability of monotone, nonnegative, and compactly supported vorticities in the half cylinder. Earlier work by Burton \cite{burton2005global} and Turkington \cite{turkington1987evolution} established foundational stability results for vortex patches using rearrangement techniques. However, all these results highly rely on the specific geometry of the domain, the assumptions about the initial data, and the techniques used (such as rearrangement or translation invariance).

In recent years, a variational approach based on minimizing a penalized energy has been successfully applied to prove the orbital stability of vortex patches. Abe and Choi \cite{abe2022stability} established the stability of Lamb dipoles in the half‑plane $\R_+^2$ by considering the minimization problem 
\begin{align}
    I_{\mu,\nu,\lambda}=\inf_{\omega \in K_{\mu,\nu}}\{-E_{2,\lambda}\}, \label{def:minimizing problem}
\end{align}
where $K_{\mu,\nu}$ is a class of admissible vorticities with fixed first moment and bounded $L^1$ norm, and $E_{2,\lambda}$ is the penalized kinetic energy. Their proof relies on two key properties of the half‑plane: i) the translation invariance property, which allows the use of rearrangement techniques, and ii) the spatial scaling invariance, which is used to obtain monotonicity and negativity and strictly decreasing in $\mu$ of  $I_{\mu,\nu,\lambda}$ to control minimizing sequences.

The variational approach of Abe and Choi \cite{abe2022stability} has recently been extended to more complex symmetric configurations. Choi, Jeong, and Yao \cite{choi2024stability} studied the orbital stability of a pair of opposite-signed Lamb dipoles under odd-odd symmetry, as well as concentrated vortices in a quadrant, using the monotonicity of the first moment and pointwise kernel estimates. Abe, Choi, Jeong, Sim, and Woo \cite{abe2025existence} demonstrate the existence and stability of Sadovskii vortices. However, those results are heavily reliant on the convenience of the (half) plane, i.e., the explicit structure of the Green's function, the scaling property, and rearrangement techniques. In contrast, domains meet the “weak finite volume condition” and infinite strip considered here lack these structures, requiring new tools to establish weak continuity of the kinetic energy and consequently the existence and stability of minimizers.

The main goal of this paper is to extend the variational approach to two classes of domains: 
\begin{enumerate} [label=\arabic*.]
    \item \label{def:case Omega=D} $\Omega=\D = D_u\cup D_l$, 
    where
        \begin{align}
            D_u := \mathcal{D} \cap \{x_2\ge 1\}, && D_l := \D \cap \{x_2<1\}, \label{def:upper and lower part of domain}
        \end{align}
        that {satisfies the following two conditions:
        \begin{itemize}
            \item The “weak finite volume condition”, i.e.,         \begin{align}
            \int_{D_l} x_2^p dx<\infty, \tt{ for some } p\geq 0,\tt{ and } ~ \tt{Vol}(D_u)
            % =\int_{D_u} x_1^0 dx
            <\infty.\label{def:weak_finite_volume_domain_assumption}
        \end{align}
        \item The Biot-Savart law is applicable on $\D$. 
        \end{itemize}}
        Without loss of generality, it suffices to study the case when \(p\ge 2\). This class includes a wide range of domains, including but not limited to all simply connected domains with (piecewise) smooth boundary \cite{lacave2019euler}. The weak tangency condition on non-smooth domains is discussed in \cite{gerard2013two} and \cite{galdi2011introduction}. In particular, this class covers all bounded domains, {domains with periodic boundaries}, and domains whose boundaries are graphs of some appropriate functions. These are of considerable importance in the study of free-boundary problems.
        
    \item \label{def:case Omega=S} $\Omega=\S$, where $\S$ is the infinite strip
    \begin{align}
        \S=\R \times (0,L), ~ L>0, \label{def:strip_domain_assumption}
    \end{align}
    whose Green’s function is given explicitly by
    \begin{align}
        G(x,y)=&\frac{1}{4\pi}\ln\Big(1+\frac{\sin(\frac{\pi}{L}x_2)\sin(\frac{\pi}{L}y_2)}{\sinh(\frac{\pi}{2L}(x_1-y_1))^2+\sin(\frac{\pi}{2L}(x_2-y_2))^2}\Big). \label{def:green's function for strip}
    \end{align}
    Such strips can be regarded as a generalization of the half-plane model to domains of finite width, or as a limit $p\to \infty$ of Class \ref{def:case Omega=D}. Nevertheless, due to the different widths $L$ and the absence of decay of the domain itself prevent a direct application of the methods from \cite{abe2022stability} or from Class \ref{def:case Omega=D}.
    
\end{enumerate}

% In class \ref{def:case Omega=D}, without loss of generality, it suffices to study the case \(p\ge 2\). In addition to this, the Biot–Savart law applies to many domains, including all simply connected domains with (piecewise) smooth boundary \cite{lacave2019euler}, while the weak tangency condition on non-smooth domains is discussed in \cite{gerard2013two} and \cite{galdi2011introduction}. Consequently, our results encompass a substantial class of domains, such as all bounded domains, and domains exhibiting polynomial (resp. integrable) decay in horizontal (resp. vertical) directions. In particular, this class includes cases with periodic and image boundary conditions, both of which are of considerable importance in the study of free-boundary problems.

% In class \ref{def:case Omega=S}, the strip can be regarded as a direct extension of the half-plane model in \cite{abe2022stability}, or as a generalization of the class \ref{def:case Omega=D} to $p=\infty$. However, due to the different widths of the strips, i.e. the sizes of $L$ vary, directly applying the method in half-plane model will encounter some difficulties comes form geometric properties. Meanwhile, due to the disappearance of the decay of the domain itself, the methodology in previous class is also difficult to hold.

More precisely, we introduce the set of admissible vorticities for parameters $0\leq \mu,\nu,\lambda < \infty$:
\begin{align*}
    K_{\mu,\nu}=\left\{\omega\in L^2(\Omega) \big| \omega\geq0, \int_{\Omega}x_2\omega(x)dx=\mu, \int_{\Omega}\omega(x)dx\leq\nu\right\}.
\end{align*}

The kinetic energy and the penalized energy are given by
\begin{align*}
    E[\omega]=\frac{1}{2}\int_{\Omega}\int_{\Omega}G(x,y)\omega(x)\omega(y)dxdy,\quad E_{2,\lambda}[\omega]=E[\omega]-\frac{1}{2\lambda}\|\omega\|_{2}^2,
\end{align*}
where $G$ is the Green’s function for the domain $\Omega$. Define $S_{\mu,\nu,\lambda}\subseteq K_{\mu,\nu}$ by the set of minimizers for \eqref{def:minimizing problem} in $K_{\mu,\nu}$. By the measure scaling, $\hat{\omega}(x)=\omega(x)/\nu,$ the problem reduces to the case $\nu=1$. We abbreviate the notation as 
\begin{align*}
    K_\mu=K_{\mu,1}, ~ I_{\mu,\lambda}=I_{\mu,1,\lambda}, \tt{ and } S_{\mu,\lambda}=S_{\mu,1,\lambda}.
\end{align*}
Recall the the stream function associated with vorticity $\omega$ in \eqref{def:stream function},
% \begin{align}
%     \psi(x)=\int_{\Omega} G(x,y) \omega(y)dy, \label{def:stream function}
% \end{align}
$E[\omega]=\frac{1}{2}\int \psi\omega dx$. Adapting the proof of \cite{abe2022stability} to our setting encounters three main difficulties that demand new analytical tools.

\subsection{Loss of spatial scaling.}  \label{sec: difficulty_loss_scaling}
  % In~\cite{abe2022stability} the half-plane is invariant under the scaling $\hat\omega(x)=(\lambda\nu)^{-1}\omega(\lambda^{-1/2}x)$, which reduces the problem to the case $\lambda=1$.  For a general domain from Class~1 or Class~2, the scaled function no longer lives in the same domain, and the kinetic energy does not transform simply.  Hence we cannot unify $\lambda$.  We overcome this by using the maximum principle to obtain the pointwise estimate $0\le G(x,y)\le G_H(x,y)$ (where $G_H$ is the half-plane Green's function), which allows us to mimic the half-plane estimates and to prove negativity of $I_{\mu,\lambda}$ by constructing explicit trial functions (Proposition~\ref{prop:negativity}).
    Due to the loss of spatial scaling invariance, the Green's function on $\Omega$ does not satisfy the property that $G(cx,cy)=G(x,y)$. Therefore, for any $c>0$ and scaling $\widehat\omega (x)=\omega(cx)$, there is no possible connection between $E[\widehat\omega|_{\Omega}]$ and $E[\omega]$. Meanwhile, if we consider the synchronous changes of the domain $\Omega$ during scaling and regard $\widehat\omega$ as a function defined on $c^{-1}\Omega$, the penalized energy of $\widehat\omega$ should be:
    \begin{align*}
        E^c_{2,\lambda}[\widehat\omega] = & \int_{c^{-1}\Omega}\int_{c^{-1}\Omega}G^{c}(x,y)\widehat\omega(x)\widehat\omega(y)dxdy- \frac{1}{2\lambda}\int_{c^{-1}\Omega}\widehat\omega^2dx\\
        = & c^{-4}\int_{\Omega}\int_{\Omega}G(x,y)\omega(x)\omega(y)dxdy-\frac{1}{2\lambda}\int_{c^{-1}\Omega}\widehat\omega^2dx\\
        = & c^{-4}E[\omega] - c^{-2}\frac{1}{2\lambda}\int_{\Omega}\omega^2dx,
        % \\
        % E^{c}[\hat\omega]=&\int_{c^{-1}\Omega}\int_{c^{-1}\Omega}G^{c}(x,y)\hat\omega(x)\hat\omega(y)dxdy\\
        % =&c^{-4}\int_{\Omega}\int_{\Omega}G(x,y)\omega(x)\omega(y)dxdy=c^{-4}E[\omega_{a,b}],\\
        % \|\hat\omega\|_2^2=&\int_{c^{-1}\Omega}\hat\omega^2dx=c^{-2}\int_{\Omega}\omega^2dx,
    \end{align*}
    where we temporarily use $E^{c}$ and $G^c$ to represent the kinetic energy and the Green's function for $c^{-1}\Omega=\left\{c^{-1}x:x\in \Omega \right\}$. To compare $E_{2,\lambda}^{c}[\widehat\omega]$ with $E_{2,\lambda}[\omega]$, it is necessary to let $c^{-4}=c^{-2}$ with an unique solution $c=1$, which is meaningless. 

    This issue directly led to the failure of the methods used in \cite{abe2022stability} to study kinetic energy and penalized energy. Fortunately, based on the inclusion relationship $\Omega\subseteq \R^2_+$, we have
    \begin{align}
        0\leq G(x,y)\leq G_H(x,y) \tt{ for any } x,y\in \overline{\Omega},\label{ineq:estimate of green's function}
    \end{align}
    where $G_H$ is the Green's function for the upper half plane, and this comparison allows us to derive estimates similar to those in \cite{abe2022stability}. By choosing appropriate parameters $\mu,\lambda$ and constructing explicit trial functions in $K_{\mu,\lambda}$, we prove negativity of \(I_{\mu,\nu,\lambda}\). Other impacts of the loss of scaling will be mentioned and solved below.

\subsection{Loss of horizontal translation invariance (Class~\ref{def:case Omega=D}).}  
    % The Green's function of a general decay domain is not a function of $|x_1-y_1|$, so the kinetic energy is not translation invariant.  Rearrangement and concentration arguments based on translation, therefore, fail.  We exploit the decay of $\mathcal{D}$ itself to show that the weighted mass of any minimising sequence concentrates on a bounded region $D_0$.  On $D_0\times D_0$ the Green's function is square-integrable, which allows us to pass to the limit in the kinetic energy using weak $L^2$ convergence and to obtain the existence of a minimiser.
    The Green's function of domains in this class
    % meet the weak finite volume condition 
    cannot be written as $G(x,y)=g(|x_1-y_1|,x_2,y_2)$ for some function $g$, so that the kinetic energy is not translation invariant. Meanwhile, such difficulty also undermines all techniques that rely on rearrangement methods. 
  
    To overcome this, we introduce the weak finite volume condition of $\D$ itself to show that the weighted mass of any minimizing sequence concentrates on a bounded region. In particular, for any minimizing sequence $\{\omega_n\}$ and $\epsilon>0$, there exists a bounded domain $D_0\subseteq \D$ such that the kinetic energy, as well as crossing terms, involving $\omega_n|_{\D\setminus D_0}$ is in $\mathcal{O}(\eps)$. Under such a split, the Green's function is square-integrable on $D_0\times D_0$, so that we can pass the limit in kinetic energy using weak $L^2$ convergence and obtain the existence of a minimizer and a general convergence theorem.

\subsection{Difficulty in proving strict monotonicity of $I_\mu$ (Class~\ref{def:case Omega=S}).}  
    % In~\cite{abe2022stability} the scaling invariance immediately yields the strict decrease of $I_{\mu,\lambda}$ with respect to $\mu$, a crucial ingredient for the concentration-compactness argument.  Because we lack scaling invariance, we cannot follow the same route.  Instead, we reverse the logic: we first prove the existence of a minimiser (by a symmetrisation method and careful energy estimates, see \S\ref{sec:exists of minimizer in strip}), and then use the properties of this minimiser (uniform $L^1$ bound, compact support) to establish the strict monotonicity of $I_{\mu,\lambda}$ (Proposition~\ref{prop:strict_dec}).  This strict monotonicity is finally employed in a concentration-compactness framework to obtain a general convergence theorem for minimising sequences (Theorem~\ref{thm:gen_conv_S}).

    The method in \cite{abe2022stability}, as well as \cite{alma991005089079703407} and \cite{FriedmanAvner1981Vrea}, applies to prove compactness of a minimizing sequence satisfying
    \begin{align}
        \begin{aligned}
            & \omega(x_1,x_2)=\omega(-x_1,x_2),\\
            & \omega(x_1,x_2) \tt{ is non-increasing for } x_1 > 0.
        \end{aligned}\label{def:symmetry property}
    \end{align}
    Under such an assumption, these authors discuss the weighted mass of the minimizing sequence of $I_{\mu,\nu,\lambda}$ is concentrated under a proper $x_1$-translation, and the key to this lies in the rearrangement and the strictly decreasing nature of $I_{\mu,\nu,\lambda}$. The former can be resolved by two points: i) the structure of the Green's function for $\S$ meet the criteria of standard rearrangement inequality (see Proposition \ref{prop:steiner symmetrization}), ii) the comparison \eqref{ineq:estimate of green's function} leads to a comparison between stream functions by viewing $L^i(\S)=\{\omega\in L^i(\R_+^2) ~ | ~ \spt\omega\subseteq \overline{\S} \}$, then for any proper $i>1$ and $\omega\in L^i(\S)$ can generate a stream function $\psi^H$ on $\R^2_+$ in the sense that
    \begin{align*}
        \psi^H(x)=\int_{\R^2_+}G_H(x,y)\omega(y)dy=\int_{\S}G_H(x,y)\omega(y)dy,
    \end{align*}
    with 
    \begin{align}
        \psi(x)\leq \psi^H(x) \tt{ for any } x\in \S. \label{ineq:comparsion between stream function}
    \end{align}
    The latter is more complicated due to the lack of scaling invariance. We cannot follow the same route, instead, it is necessary to proof it alone a new path: prove the existence of a minimizer (see Sec \ref{sec:exists of minimizer in strip}), and then demonstrate more properties of minimizers as $\mu$ is sufficiently small, such as an uniform $L^1$ bound, to establish the strict monotonicity of $I_{\mu,\nu,\lambda}$ (Proposition~\ref{prop:decreasing of I_mu}).  This strict monotonicity is finally employed in a concentration-compactness framework to obtain a general convergence theorem for minimizing sequences (Theorem~\ref{thm:general convergence theorem for strip}).

\subsection{Organization of the paper.}
The paper is organized as follows. Section \ref{sec:minimizing problem} contains preliminary estimates, Proposition \ref{prop:estimate for kinetic energy} and \ref{prop:propoties of I_mu}, establishing the negativity of $I_{\mu,\lambda}$ for suitable parameters and the uniform $L^2$ boundedness of minimizing sequences. Consequently, we investigate the general structure of the minimizer in Proposition \ref{prop:general structure}.

Section \ref{sec:general convergence for class 1} is devoted to a general convergence theorem to the variational problem \eqref{def:minimizing problem} for the Class \ref{def:case Omega=D}, including the discussion about the kinetic energy (Proposition \ref{prop:weak_finite_volume_energy_convergence}), and the existence of minimizer from minimizing sequence (Theorem \ref{thm:exists of minimizer}).
%, which are all ingredients for the general convergence theorem. 
Section \ref{sec:general convergence for class 2} is devoted to a general convergence theorem for the Class \ref{def:case Omega=S}, including the discussion about the existence of minimizer (Sec \ref{sec:exists of minimizer in strip}), the uniformly $L^1$-norm decay to minimizers (Proposition \ref{prop:minimizer L1,2 estimate}), and the general convergence theorem (Theorem \ref{thm:general convergence theorem for strip}).

In Section \ref{sec:stability}, we prove the orbital stability of the minimizer over the lifespan (Theorem \ref{thm:stable wrt minimizer}); finally, in Section \ref{sec:discussion}, we present a discussion of future work.

\subsection*{Acknowledgement}
Both authors acknowledge support from Hong Kong RGC Grants CUHK--14304424 and CUHK--14301225. 

\section{A Minimization Problem}\label{sec:minimizing problem}

We establish the basic properties of the minimization problem. Thanks to the observation \eqref{ineq:estimate of green's function}, we derive several useful bounds for the kinetic energy and crossing terms. These are collected in Proposition \ref{prop:estimate for kinetic energy} and are analogous to those obtained in \cite{abe2022stability}. Next, we study the sign of $I_{\mu,\lambda}$. The proof of negativity differs from that in \cite{abe2022stability} because we lack scaling invariance; instead, we construct an explicit trial function supported on a finite measure subset of $\Omega$. See Proposition \ref{prop:propoties of I_mu} below.

\begin{prop}\label{prop:estimate for kinetic energy}
    The following estimates hold for $\omega,\omega_i\in L^1\cap L^2(\Omega)$ satisfying $x_2\omega, x_2\omega_i\in L^1(\Omega)$ with some constant $C$ independent from $\omega,\omega_i, i=1,2$:
    \begin{align}
        &\Bigg|\int_{\Omega}G(x,y)\omega(y)dy\Bigg|\leq C x_2^{1/2}\|\omega\|_{1}^{1/2}\|\omega\|_{2}^{1/2},\label{ineq:estimate of stream fct}\\
        &E[\omega]\leq C \|x_2\omega\|_{1}^{1/2}\|\omega\|_{1}\|\omega\|_{2}^{1/2},\label{ineq:estimate of kinetic energy}\\
        &\Bigg|\int_{\Omega}\int_{\Omega}G(x,y)\omega_1(y)\omega_2(x)dxdy\Bigg|\leq C \|\omega_1\|_{1}^{1/2}\|\omega_1\|_{2}^{1/2}\|x_2\omega_2\|_{1}^{1/2}\|\omega_2\|_{1}^{1/2},\label{ineq:estimate of energy crossing term}\\
        &|E[\omega_1]-E[\omega_2]|\leq C \|\omega_1-\omega_2\|_{1}^{1/2}\|\omega_1-\omega_2\|_{2}^{1/2}\|x_2(\omega_1+\omega_2)\|_{1}^{1/2}\|\omega_1+\omega_2\|_{1}^{1/2}.\label{ineq:estimate of energy difference}
    \end{align}
\end{prop}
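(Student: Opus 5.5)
Since $\Omega\subseteq\R^2_+$, the comparison \eqref{ineq:estimate of green's function} gives $0\le G\le G_H$ on $\overline\Omega\times\overline\Omega$. The plan is therefore to extend every $\omega\in L^1\cap L^2(\Omega)$ by zero to $\R^2_+$ and prove each estimate with $G_H$ in place of $G$. Because $G\ge0$, we have $|\int G\omega|\le\int G_H|\omega|$, so it is enough to prove the bounds for $|\omega|$ with the half-plane kernel. This is exactly the setting of the estimates in \cite{abe2022stability}, and we will essentially reproduce them.

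\textbf{Step 1: \eqref{ineq:estimate of stream fct}.} We use the explicit kernel
\[
G_H(x,y)=\tfrac{1}{4\pi}\ln\Big(1+\tfrac{4x_2y_2}{|x-y|^2}\Big),
\]
together with the elementary bounds $\ln(1+s)\le s$ and $\ln(1+s)\le C_\alpha s^\alpha$ for $\alpha\in(0,1]$. Fix $x$ and split the $y$-integral at radius $r$ around $x$.

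In the far region $|x-y|\ge r$ we use $\ln(1+s)\le \sqrt{s}$. This gives $G_H\lesssim x_2^{1/2}y_2^{1/2}/|x-y|$. That is not quite the right form, so instead we split according to whether $y_2\le 2x_2$ or $y_2>2x_2$.

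For $y_2>2x_2$ we have $|x-y|\ge y_2/2$, so $4x_2y_2/|x-y|^2\lesssim x_2/y_2\le 1$. Hence $G_H\lesssim x_2/|x-y|$ there, and in fact $G_H\lesssim \sqrt{x_2y_2}/|x-y|\lesssim \sqrt{x_2}/|x-y|^{1/2}$ after using $|x-y|\ge y_2/2$.

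For $y_2\le 2x_2$ we get $\sqrt{x_2y_2}\lesssim x_2$. Since $G_H$ is only logarithmic in the pure region, we instead use $G_H\le C x_2^{1/2}|x-y|^{-1/2}$ there as well. This bound holds globally because $\ln(1+s)\le Cs^{1/4}$ and $4x_2y_2\le 4x_2(x_2+|x-y|)$, with the case $|x-y|\lesssim x_2$ handled separately via the log.

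Once we have $G_H(x,y)\le Cx_2^{1/2}|x-y|^{-1/2}$, splitting at radius $r$ gives
\[
\int_{|x-y|<r}|x-y|^{-1/2}|\omega|\le Cr^{3/2}\|\omega\|_2\quad\text{(H\"older)},\qquad \int_{|x-y|\ge r}\le r^{-1/2}\|\omega\|_1 .
\]
Optimizing with $r=(\|\omega\|_1/\|\omega\|_2)^{1/2}$ yields $\|\omega\|_1^{3/4}\|\omega\|_2^{1/4}$. This is the wrong exponent balance, so the correct pointwise kernel bound must be $G_H\le Cx_2^{1/2}y_2^{1/2}|x-y|^{-1}$ combined with the logarithm near the diagonal. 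The kernel step is the main obstacle: we need exactly the exponents $\tfrac12,\tfrac12$ in the final inequality.

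Our first attempt at Step 1 will be the following. For the near part, use $\int_{|x-y|<r}G_H|\omega|\le \|G_H(x,\cdot)\|_{L^2(B_r)}\|\omega\|_2$, where $\|G_H(x,\cdot)\|_{L^2(B_r)}\lesssim (x_2r)^{1/2}$ by direct computation from $\ln(1+s)\le\sqrt s$ and $y_2\le x_2+r$. For the far part, use $G_H\lesssim x_2y_2/|x-y|^2$ and $y_2\le x_2+|x-y|$, so that $G_H\lesssim x_2(x_2+|x-y|)/|x-y|^2\lesssim x_2/r$ when $r\ge x_2$. Combining,
\[
|\psi|\lesssim (x_2r)^{1/2}\|\omega\|_2 + (x_2/r)\|\omega\|_1 ,
\]
and with a suitable choice of $r$ (and a separate treatment of the regime $r<x_2$ using $G_H\lesssim\sqrt{x_2y_2}/|x-y|$) we aim for $x_2^{1/2}\|\omega\|_1^{1/2}\|\omega\|_2^{1/2}$, matching \cite{abe2022stability}.

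\textbf{Step 2: the remaining estimates.} These follow once \eqref{ineq:estimate of stream fct} is in hand.

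For \eqref{ineq:estimate of energy crossing term}, multiply the bound on $|\int G\omega_1|$ by $|\omega_2(x)|$ and integrate. This gives $C\|\omega_1\|_1^{1/2}\|\omega_1\|_2^{1/2}\int x_2^{1/2}|\omega_2|$, and Cauchy--Schwarz gives $\int x_2^{1/2}|\omega_2|\le\|x_2\omega_2\|_1^{1/2}\|\omega_2\|_1^{1/2}$.

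Then \eqref{ineq:estimate of kinetic energy} is the special case $\omega_1=\omega_2=\omega$, since $E=\tfrac12\int\psi\omega$.

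Finally, \eqref{ineq:estimate of energy difference} uses the symmetry $G(x,y)=G(y,x)$ to write
\[
E[\omega_1]-E[\omega_2]=\tfrac12\iint G(x,y)(\omega_1-\omega_2)(y)(\omega_1+\omega_2)(x)\,dx\,dy ,
\]
and then applies \eqref{ineq:estimate of energy crossing term} with the pair $(\omega_1-\omega_2,\ \omega_1+\omega_2)$. All constants come from $G_H$ alone, so they are independent of $\omega,\omega_i$ and of $\Omega$.
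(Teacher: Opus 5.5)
Your Step 2 is fine. The Cauchy--Schwarz step $\int x_2^{1/2}|\omega_2|\le\|x_2\omega_2\|_1^{1/2}\|\omega_2\|_1^{1/2}$, the diagonal case, and the symmetric rewriting of $E[\omega_1]-E[\omega_2]$ are exactly how \eqref{ineq:estimate of kinetic energy}--\eqref{ineq:estimate of energy difference} follow.

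The gap is Step 1, on which everything rests. You never establish \eqref{ineq:estimate of stream fct}, and the route you finally commit to cannot give it.
\begin{itemize}
\item Your bound $|\psi(x)|\lesssim (x_2r)^{1/2}\|\omega\|_2+(x_2/r)\|\omega\|_1$ has minimum over $r>0$ of order $x_2^{2/3}\|\omega\|_1^{1/3}\|\omega\|_2^{2/3}$. This is the target times $(x_2\|\omega\|_2/\|\omega\|_1)^{1/6}$.
\item It therefore closes only when $x_2\le\|\omega\|_1/\|\omega\|_2$, by taking $r=\|\omega\|_1/\|\omega\|_2\ge x_2$.
\item In the complementary regime, your proposed fix $G_H\lesssim\sqrt{x_2y_2}/|x-y|$ again gives $\sup_{|x-y|\ge r}G_H\lesssim x_2/r$. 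That is the same function of $r$, so the unbounded factor persists.
\item The claim $\|G_H(x,\cdot)\|_{L^2(B_r)}\lesssim(x_2r)^{1/2}$ also does not follow ``from $\ln(1+s)\le\sqrt s$''. That inequality gives $G_H^2\lesssim x_2y_2/|x-y|^2$, which is not integrable at $y=x$ in two dimensions. The bound is true, but only if you keep the logarithm near the diagonal.
\end{itemize}

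The route you abandoned was the right one; the mistake was arithmetic. The bound $G_H(x,y)\le Cx_2^{1/2}|x-y|^{-1/2}$ does hold globally:
\begin{itemize}
\item for $|x-y|\ge x_2$, use $\ln(1+s)\le\sqrt s$ and $y_2\le 2|x-y|$;
\item for $|x-y|<x_2$, use $\ln(1+s)\le Cs^{1/4}$ and $s\le 8x_2^2/|x-y|^2$.
\end{itemize}
Using $s^{1/4}$ in the far range would only give $(x_2/|x-y|)^{1/4}$. The error was in the H\"older step. Since $\int_{B_r(x)}|x-y|^{-1}\,dy=2\pi r$, the near part is $O(x_2^{1/2}r^{1/2}\|\omega\|_2)$, not $O(r^{3/2}\|\omega\|_2)$. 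Hence
\[
|\psi(x)|\le Cx_2^{1/2}\big(r^{1/2}\|\omega\|_2+r^{-1/2}\|\omega\|_1\big),
\]
and $r=\|\omega\|_1/\|\omega\|_2$ gives \eqref{ineq:estimate of stream fct} exactly.

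The paper, via Abe--Choi, packages the same fact differently. Scale invariance $G_H(cx,cy)=G_H(x,y)$ gives $\|G_H(x,\cdot)\|_{L^q(\R^2_+)}=C_qx_2^{2/q}$. This is finite only for $q>2$, since $G_H(x,y)\sim x_2y_2/|y|^2$ as $|y|\to\infty$, so the range $q\in(1,2)$ written in the paper is a slip. With $q=4$, H\"older and $\|\omega\|_{4/3}\le\|\omega\|_1^{1/2}\|\omega\|_2^{1/2}$ finish the proof. Your pointwise bound is just the statement that $G_H(x,\cdot)$ lies in weak-$L^4$ with norm $\lesssim x_2^{1/2}$, which is all the splitting argument needs.
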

\noindent\textit{Proof: } Use the inequality $ 0\leq G(x,y)\leq G_H(x,y)$, we derive the bound for any $q\in(1,2)$ that
\begin{align*}
    \Bigg(\int_{\Omega}G(x,y)^qdy\bigg)^{1/q}\leq \Bigg(\int_{\R^2_+}G_H(x,y)^qdy\Bigg)^{1/q}\leq Cx_2^{2/q}.
\end{align*}
Then all estimates \eqref{ineq:estimate of stream fct}-\eqref{ineq:estimate of energy difference} can be obtained through the same arguments in Proposition 2.1, \cite{abe2022stability}.

\rightline{$\Box$}

These estimates, while elementary, are fundamental for the rest of the paper. They not only provide uniform bounds for general minimizing sequences (as will be seen in Remark \ref{remark:uniformly bounded}) but also serve as the workhorse for controlling error terms in convergence arguments, such as the proof of Proposition \ref{prop:weak_finite_volume_energy_convergence} and \ref{prop:strip energy_convergence}. 

Next, we shall prove that $I_{\mu,\lambda}$ is negative for a suitable class of parameters $(\mu,\lambda)$. 
Different from Lemma 2.3 in \cite{abe2022stability}, where $I_{\mu,\lambda}<0$ for all positive $\mu$ and $\lambda$, we can only prove the negativity for suitable parameters.
% This is in contrast to Lemma 2.3 in \cite{abe2022stability}, where the negativity of $I_{\mu,\lambda}$ for all $\mu,\lambda>0$ follows from the spatial scaling invariance of $\R_+^2$. Since our domain lacks this property, a different argument is required.

\begin{prop}\label{prop:propoties of I_mu} Recall the definition of $I_{\mu,\lambda}=I_{\mu,1,\lambda}$ in \eqref{def:minimizing problem}
    \begin{align}
        & I_{0,\lambda}=0, && \text{ for any $\lambda\in \R$}, \label{eqn:I_0}\\
        & I_{\mu,\lambda}>-\infty, && \text{ for any } \mu\geq 0 \text{ and } \lambda>0 ,\label{eqn:I_mu is finite}\\
        & I_{\mu,\lambda}<0, && \text{ for any } (\mu,\lambda)\in \bigcup_{K\subseteq \Omega \text{ and } |K|<\infty}
         (0, \mu_K ] \times (\lambda_K,\infty ). \label{ineq:negativity of I_mu}
    \end{align}
    where $\mu_K=\frac{1}{\tt{Vol(K)}} \int_K x_2 dx$ and $\lambda_K=\frac{\tt{Vol(K)}}{\int_K\int_K G(x,y) dydx}$.
\end{prop}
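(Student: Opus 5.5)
The three assertions are independent, and each follows directly from the definition of $K_\mu$ together with Proposition~\ref{prop:estimate for kinetic energy}. For \eqref{eqn:I_0} and \eqref{eqn:I_mu is finite} the plan is to bound $-E_{2,\lambda}$ from below on all of $K_\mu$. For \eqref{ineq:negativity of I_mu} the plan is to exhibit one explicit trial vorticity, a constant multiple of an indicator function, whose penalized energy is positive.

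For \eqref{eqn:I_0}, take $\omega\in K_0$. Since $\Omega\subseteq\R^2_+$, we have $x_2>0$ on $\Omega$, and $\omega\ge 0$. The constraint $\int_\Omega x_2\omega\,dx=0$ therefore forces $\omega=0$ a.e. So $K_0=\{0\}$, and $I_{0,\lambda}=-E_{2,\lambda}[0]=0$ for every $\lambda$; note $\lambda$ never enters once $\omega=0$.

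For \eqref{eqn:I_mu is finite}, take $\omega\in K_\mu$ and write $t=\|\omega\|_2$. The bound \eqref{ineq:estimate of kinetic energy}, with $\|x_2\omega\|_1=\mu$ and $\|\omega\|_1\le 1$, gives $E[\omega]\le C\mu^{1/2}t^{1/2}$. Hence
\begin{align*}
-E_{2,\lambda}[\omega]\ \ge\ \frac{t^2}{2\lambda}-C\mu^{1/2}t^{1/2}\ \ge\ \inf_{s\ge 0}\Big(\frac{s^2}{2\lambda}-C\mu^{1/2}s^{1/2}\Big).
\end{align*}
The right-hand side is a finite negative number depending only on $(\mu,\lambda)$: by Young's inequality it is of order $-(\lambda\mu)^{1/3}$. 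Taking the infimum over $K_\mu$ gives $I_{\mu,\lambda}>-\infty$. The same computation shows that minimizing sequences are bounded in $L^2$.

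For \eqref{ineq:negativity of I_mu}, fix $K\subseteq\Omega$ with $0<|K|<\infty$ and $\int_K x_2\,dx<\infty$, so that $\mu_K$ is a genuine positive number; otherwise the interval $(0,\mu_K]$ is void or degenerate. Given $\mu\in(0,\mu_K]$, set
\begin{align*}
\omega=a\mathbf 1_K,\qquad a=\frac{\mu}{\int_K x_2\,dx}.
\end{align*}
This $\omega$ lies in $K_\mu$:
\begin{itemize}
\item it is nonnegative and lies in $L^1\cap L^2$;
\item $\int_\Omega x_2\omega\,dx=\mu$ by the choice of $a$;
\item $\int_\Omega\omega\,dx=a|K|=\mu/\mu_K\le 1$.
\end{itemize}
Its penalized energy is
\begin{align*}
E_{2,\lambda}[\omega]=\frac{a^2}{2}\Big(\int_K\int_K G(x,y)\,dy\,dx-\frac{|K|}{\lambda}\Big),
\end{align*}
which is strictly positive exactly when $\lambda>\lambda_K$. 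Then $I_{\mu,\lambda}\le -E_{2,\lambda}[\omega]<0$.

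The main obstacle is making sense of $\lambda_K$, which requires $0<\int_K\int_K G<\infty$. Finiteness comes from \eqref{ineq:estimate of kinetic energy} applied to $\mathbf 1_K$, using that $x_2\mathbf 1_K\in L^1$. Positivity needs the strict positivity of the Dirichlet Green's function $G(x,y)>0$ for $x,y$ in the interior of $\Omega$, which comes from the strong maximum principle, together with $|K|>0$. I would state these standing requirements on $K$ explicitly, since otherwise $\mu_K$ or $\lambda_K$ is undefined.
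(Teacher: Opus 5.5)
Your proposal is correct and follows the paper's proof essentially verbatim. Both arguments observe that $K_0=\{0\}$, derive the lower bound from \eqref{ineq:estimate of kinetic energy} plus a Young-type optimization, and use the trial function $c_0\mathbf 1_K$ with $c_0=\mu/\int_K x_2\,dx$. Your explicit side conditions on $K$ (namely $|K|>0$, $\int_K x_2\,dx<\infty$, and $G>0$) are sensible additions that the paper leaves implicit.

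One minor slip: the infimum of $\frac{s^2}{2\lambda}-C\mu^{1/2}s^{1/2}$ scales like $-\lambda^{1/3}\mu^{2/3}$, matching the paper's $C\lambda^{1/3}\mu^{2/3}$, not $-(\lambda\mu)^{1/3}$. This does not affect the finiteness conclusion.
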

\noindent\textit{Proof: } The property \eqref{eqn:I_0} is trivial since $K_0=\{0\}$. By \eqref{ineq:estimate of kinetic energy} and Young's inequality, for any $\omega\in K_\mu$,
\begin{align*}
    E_{2,\lambda}[\omega]&\leq C \|x_2\omega\|_{1}^{1/2}\|\omega\|_{1}\|\omega\|_{2}^{1/2}-\frac{1}{2\lambda}\|\omega\|_2^2\\
    &\leq \frac{3}{4}\Bigg(2^{-1/4}C\lambda^{1/4}\|x_2\omega\|_{1}^{1/2}\|\omega\|_{1}\Bigg)^{4/3}+\Bigg(\frac{2}{4\lambda}-\frac{1}{2\lambda}\Bigg)\|\omega\|_2^2\\
    &\leq C\lambda^{1/3}\mu^{2/3},
\end{align*}
then $$I_{\mu,\lambda}=\inf_{\omega\in K_{\mu,\lambda}}\big\{-E_2[\omega ]\big\}\geq -C\lambda^{1/3}\mu^{2/3}>-\infty.$$

For any $(\mu,\lambda)\in
        (0,\mu_K ] \times (\lambda_K,\infty)$ where $K\subseteq \Omega \text{ and } |K|<\infty$, define
\begin{align*}
    \omega_0=c_0 1_{K}, \text{ where } c_0=\mu\Big(\int_{K}x_2dx\Big)^{-1}>0.
\end{align*}
It is easy to see $\omega_0\in K_\mu $ by the choice of $\mu$, and 
\begin{align*}
    E_{2,\lambda}[\omega_0]&= \frac{1}{2}c_0^2\int_{K}\int_{K}G(x,y)dxdy-\frac{1}{2\lambda}c_0^2 \tt{Vol}(K)\\
    &=\frac{1}{2}c_0^2\Big(\int_{K}\int_{K}G(x,y)dxdy-\frac{1}{\lambda}\tt{Vol}(K)\Big)\\
    &>0,
\end{align*}
where the last inequality is positive by the choice of $\lambda$.

\rightline{$\Box$}

\begin{remark}[For general $\nu>0$]\label{remark:general nu}
\end{remark}
In this case, according to the scaling $\hat\omega=\omega/\nu$, the ranges of $\mu$ and $\lambda$ in \eqref{ineq:negativity of I_mu} should be 
\begin{align}
    (\mu,\lambda)\in \bigcup_{K\subseteq \Omega \text{ and } |K|<\infty}
        (0, \mu_K \nu] \times (\lambda_K, \infty).\label{ineq:negativity of I_mu, general nu}
\end{align}

\begin{remark}[Minimizing Sequence is $L^2$ Bounded]\label{remark:uniformly bounded}
\end{remark}
Any minimizing sequence $\{\omega_n\}$ satisfying $\omega_n\in K_{\mu_n}, \mu_n\rightarrow \mu$, and $-E_{2,\lambda}[\omega_n]\rightarrow I_\mu$ with $\mu, \lambda$ follows the assumption in \eqref{ineq:negativity of I_mu} is uniformly bounded in $L^2$. Indeed, by \eqref{ineq:estimate of stream fct} and Young’s inequality, for arbitrary $\epsilon >0 $ and $\omega\in K_\mu$,
    \begin{align*}
        \frac{1}{2\lambda}\|\omega\|_{2}^2+E_{2,\lambda}[\omega]=E[\omega]\leq \frac{3}{4}\Big(\frac{C}{\epsilon^{1/2}}\|x_2\omega\|_1^{1/2}\|\omega\|_1\Big)^{4/3}+\frac{\epsilon^2}{4}\|\omega\|_2^2.
    \end{align*}
    By taking $\epsilon=\lambda^{-1/2}$, 
    \begin{align*}
        \|\omega\|_{2}^2\leq C\mu^{2/3}\lambda^{4/3}\|\omega\|_1^{4/3}-4\lambda E_{2,\lambda}[\omega].
    \end{align*}
    Thus by $I_{\mu,\lambda}<0$, the minimizing sequence satisfies 
    \begin{align*}
        \limsup_{n\rightarrow +\infty}\|\omega_n\|_{2}\leq C\mu^{1/3}\lambda^{2/3}\limsup_{n\rightarrow +\infty}\|\omega_n\|_1^{2/3}. 
        % \label{ineq:uniformly bounded}
    \end{align*}
    In particular, if $\omega$ is a minimizer, then 
    \begin{align}
        \|\omega\|_{2}\leq C\mu^{1/3}\lambda^{2/3}\|\omega\|_1^{2/3}. \label{ineq:uniformly bounded}
    \end{align}

\begin{remark}[Behavior of $\mu,\lambda$]\label{remark:I decreasing in mu}
\end{remark}
In \eqref{ineq:negativity of I_mu}, the range of $\mu, \lambda$ is highly related to the geometric structure of $\Omega$. For those domains with finite areas, one can directly take $K=\Omega$ and simplify accordingly. In general, a suggestion is to let $K\subseteq\Omega$ be a rectangle centered at $x_0$ with proper lengths and widths of $a$ and $b$, respectively. Doing so, the range of $\mu$ determined by this $K$ can be directly calculated, and a subrange of $\lambda$ can be obtained by using the Green's function $G_{a,b}$ for the rectangle and the observation $G_{a,b}\leq G$, then a lower bound to $\int_K\int_K G(x,y)dxdy$ can be dervied.
% $G_{a,b}\leq G$, where $G_{a,b}$ is the green's function on the rectangle. Finally, the union can be calculated.

% In the Lemma 2.3 of \cite{abe2022stability}, the authors used scaling invariance to prove that for fixed $\lambda$, the quantity $-I_{\mu,\lambda}$ is strictly increasing in $\mu$. This reflects the fact that in $\R_+^2$, horizontal slices at each height are identical, so a higher vertical centroid corresponds to larger energy. However, in domains with an upper bound, such as a horizontal strip or the half-strip considered in \cite{choi2022stability}, the presence of 
% an upper boundary prevents the energy from necessarily increasing with $\mu$, even though horizontal slices remain identical. In our setting, the domain $\mathcal{D}$ is more complicated: although it is not necessarily bounded above, the horizontal slices at different heights are not identical, so the energy depends on the vertical centroid in a non‑monotone way. The loss of this monotonicity poses an additional difficulty beyond the failure of rearrangement techniques. Fortunately, as mentioned in the introduction, this obstacle can be overcome by employing operator methods.

\begin{prop}[General structure of minimizers]
\label{prop:general structure}
Let $\mu,\lambda$ be given by the assumption in \eqref{ineq:negativity of I_mu}. Each minimizer $\omega \in S_{\mu,\lambda}$ satisfies
\begin{equation}
\label{eqn:general structure of minimizer}
\omega = \lambda(\psi -W x_{2} - \gamma)_+, \qquad 
\psi (x) = \int_{\Omega}G(x,y)\omega (y)\mathrm{d}y,
\end{equation}
for some constants $W, \gamma \in \R$, uniquely determined by $\omega$.
\end{prop}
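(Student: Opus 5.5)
The plan follows the first-variation argument of Abe--Choi (Lemma 2.6 in \cite{abe2022stability}), adapted to $\Omega$. Fix a minimizer $\omega\in S_{\mu,\lambda}$. By Proposition \ref{prop:estimate for kinetic energy} the stream function $\psi$ is well defined with $|\psi(x)|\le Cx_2^{1/2}$, and $\omega\not\equiv 0$ because $I_{\mu,\lambda}<0$ under \eqref{ineq:negativity of I_mu}. We perturb $\omega$ inside $K_\mu$ by $\omega_\eps=\omega+\eps h$, where $h\in L^2\cap L^1$ with $x_2h\in L^1$ and $\int x_2 h=0$. If $\int\omega<1$ we may take $h$ of either sign; if $\int\omega=1$ we additionally require $\int h\le 0$. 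In both cases $h\ge 0$ must hold on $\{\omega<\delta\}$ for some $\delta>0$, so that $\omega_\eps\ge0$ for small $\eps>0$. Since $G$ is symmetric, the quadratic structure gives
\[
E_{2,\lambda}[\omega_\eps]=E_{2,\lambda}[\omega]+\eps\int_\Omega\Big(\psi-\tfrac1\lambda\omega\Big)h\,dx+O(\eps^2).
\]
Here the $O(\eps^2)$ term is controlled by \eqref{ineq:estimate of kinetic energy}. Minimality of $-E_{2,\lambda}$ therefore yields the variational inequality $\int(\psi-\omega/\lambda)h\le 0$ for every admissible $h$.

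Next we extract the Lagrange multipliers. Set $F=\psi-\omega/\lambda$. We first take $h$ supported in $\{\omega>\delta\}$ and of either sign. The constraints there are linear, namely $\int x_2h=0$ always, and in the case $\int\omega=1$ also $\int h=0$ is needed for two-sidedness. Hence $\int Fh=0$ on this subspace. A standard linear-algebra (Hahn--Banach or finite-dimensional duality) argument then gives constants $W,\gamma$ with $F=Wx_2+\gamma$ a.e.\ on $\{\omega>0\}$, after letting $\delta\to0$. When $\int\omega<1$ the mass constraint is inactive, so $\gamma=0$ in that case.

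To get uniqueness of $W,\gamma$, note that $\{\omega>0\}$ has positive measure and $x_2$ is not a.e.\ constant on it. This last point follows by applying the same argument to perturbations that move mass vertically. Hence the pair $(W,\gamma)$ is determined by $\omega$.

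Then we handle $\{\omega=0\}$. We take $h=h_1-h_2$ with $h_1\ge0$ supported in $\{\omega=0\}$ and $h_2$ supported in $\{\omega>\delta\}$. We choose $h_2$ so that $\int x_2h=0$ and, in the saturated case, $\int h=0$. This is possible by using two test pieces of $h_2$ at different heights. The inequality together with the previous step gives $\int_{\{\omega=0\}}(\psi-Wx_2-\gamma)h_1\le0$, so $\psi-Wx_2-\gamma\le0$ a.e.\ where $\omega=0$. On $\{\omega>0\}$ we have $\omega=\lambda(\psi-Wx_2-\gamma)$ with $\omega>0$. Combining the two regions gives $\omega=\lambda(\psi-Wx_2-\gamma)_+$.

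The main obstacle is the sign handling in the saturated/unsaturated dichotomy. Specifically, we must show that the one-sided perturbations in the second and fourth steps can really be realized within $K_\mu$, which requires $\omega$ to have mass at two distinct heights. Proving this without the scaling tools of \cite{abe2022stability} is where I expect the difficulty. I would argue by contradiction: if $\omega$ were concentrated on a horizontal line, then its $L^2$ positivity forces a set of positive measure, so it cannot lie on a single level set of $x_2$, and that settles it.
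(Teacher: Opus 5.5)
Your first-variation argument is correct and is essentially the standard Lagrange-multiplier proof (Friedman--Turkington, Proposition 2.5 of \cite{abe2022stability}) that the paper cites in place of a proof. The ``obstacle'' you flag at the end is not one: for small $\delta>0$ the set $\{\omega>\delta\}$ has positive Lebesgue measure, so $x_2$ is not a.e.\ constant on it, and your two-piece test functions always exist with no scaling needed, provided you take them bounded with bounded support so that $\omega+\epsilon h\ge 0$ and $x_2h\in L^1$.
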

\noindent\textit{Proof: }
The proof follows from a standard argument, e.g., \cite{friedman1982variational, friedman1981vortex} for vortex rings, and is a direct consequence of Proposition 2.5 in \cite{abe2022stability}, we omit the proof. 

\rightline{$\Box$}

\begin{remark}[Measure and $\gamma$]\label{remark:measure and gamma for minimizer}
Every minimizer in $\omega\in S_{\mu,\lambda}$ s.t. $\gamma \neq 0 $, we have $$\int \omega dx=1.$$
\end{remark}
\begin{proof}
    Exactly the same as the Remark 2.6(ii) in \cite{abe2022stability}

\end{proof} 

\section{General Convergence Theorem for the Class \ref{def:case Omega=D}}\label{sec:general convergence for class 1}

With the preliminary estimates and the negativity of $I_{\mu,\lambda}$ established, the next step is to prove that a general minimizing sequence actually converges to a minimizer (Theorem \ref{thm:exists of minimizer}). To achieve this goal, it is necessary to study the convergence of the kinetic energy $E[\omega]$ under weak $L^2$ convergence.

\begin{prop}[Convergence of the kinetic energy]
\label{prop:weak_finite_volume_energy_convergence}
Let \(\mathcal{D}\) be a domain in \(\R^2_+\) satisfy the weak finite volume condition. Assume \(\{\omega_n\} \subset L^1  \cap L^2(\mathcal{D})\) and \(\omega \in L^2(\mathcal{D})\) satisfy:
\begin{itemize}
    \item \(\sup_n \|\omega_n\|_1 + \sup_n \|\omega_n\|_2 + \|\omega\|_1+ \|\omega\|_2\le M\) for some constants \(M\), and
    \item \(\omega_n \rightharpoonup \omega\) weakly in \(L^2(\mathcal{D})\).
\end{itemize}
Then for any $\epsilon>0$, there exists $N=N(\epsilon)>0$, such that 
\begin{align}
    \sup_{n}\int_{D_l \cap \{|x_1|>N\}} x_2 \omega_n dx\leq \epsilon, \label{ineq:concentration of bdd seq}
\end{align}
hence 
\(E[\omega_n] \to E[\omega]\).
\end{prop}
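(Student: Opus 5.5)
The plan is to prove the tail estimate \eqref{ineq:concentration of bdd seq} first and then use it to cut the energy into a bounded piece, where weak convergence applies, and a remainder that is uniformly small.

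\emph{Step 1: the tail estimate.} On $D_l$ we have $x_2<1$. Fix $q\in(0,1)$ small, to be chosen from $p$. Write $x_2 = x_2^{1-q}\cdot x_2^{q}$ and apply H\"older's inequality:
\begin{align*}
\int_{D_l\cap\{|x_1|>N\}} x_2\omega_n\,dx \le \|\omega_n\|_2\Big(\int_{D_l\cap\{|x_1|>N\}} x_2^{2}\,dx\Big)^{1/2}.
\end{align*}
Since $p\ge 2$ and $x_2<1$ on $D_l$, we have $x_2^2\le x_2^{p'}$ for any $p'\le 2$. The natural reduction is to take $p=2$, which is what "without loss of generality $p\ge2$" should mean: on $D_l$ the weights are monotone in the exponent in the right direction only if $p\le 2$. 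So I would read the hypothesis as $\int_{D_l}x_2^2\,dx<\infty$. Then dominated convergence gives
\begin{align*}
\int_{D_l\cap\{|x_1|>N\}}x_2^2\,dx\to0 \quad\text{as } N\to\infty,
\end{align*}
and with $\|\omega_n\|_2\le M$ we obtain \eqref{ineq:concentration of bdd seq} uniformly in $n$.

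If $p>2$ is genuinely needed, I would instead interpolate. Splitting $\omega_n=\omega_n^{1/2}\omega_n^{1/2}$ would let me use the $L^1$ bound as well, but the uniform $L^2$ bound with exponent $2$ is the clean route.

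\emph{Step 2: the same tail estimate on $D_u$ and for the limit.} On $D_u$ we have $\mathrm{Vol}(D_u)<\infty$, so
\begin{align*}
\int_{D_u\cap\{|x|>R\}}\omega_n\,dx\le M\,|D_u\cap\{|x|>R\}|^{1/2}\to0.
\end{align*}
The weight $x_2$ is unbounded on $D_u$, however, so here I only control the mass there, not the weighted mass. The estimates \eqref{ineq:estimate of energy crossing term} require $\|x_2\omega\|_1$ on one factor, and I will put it on the factor supported where that quantity is controlled. For $\omega$ itself, weak lower semicontinuity gives $\|\omega\|_2\le M$, so the same tail bounds hold for $\omega$.

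\emph{Step 3: splitting the energy.} Pick a bounded set $D_0=\mathcal D\cap\{|x|<R\}\cap\{x_2>\delta\}$. The strip $\{x_2\le\delta\}$ is also small by \eqref{ineq:estimate of stream fct}, which gives $|\psi|\le Cx_2^{1/2}$. Write $\omega_n=\omega_n1_{D_0}+r_n$ and expand
\begin{align*}
E[\omega_n]=E[\omega_n1_{D_0}]+\int\!\!\int G\,\omega_n1_{D_0}\,r_n+E[r_n].
\end{align*}
The terms containing $r_n$ I bound by $\int\psi_n r_n\le CM\int x_2^{1/2}r_n$, using \eqref{ineq:estimate of stream fct} for the full $\psi_n$. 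This needs $\int x_2^{1/2}r_n$ to be small uniformly in $n$:
\begin{itemize}
\item on $D_l$-tails this follows from Step 1 together with the bound $x_2^{1/2}\le x_2^{\theta}$, via H\"older with $x_2^{1}$ and $\|\cdot\|_2$;
\item on $D_u$-tails it follows from H\"older with the finite volume, provided $x_2^{1/2}1_{D_u}\in L^2$.
\end{itemize}
That last condition, $\int_{D_u}x_2\,dx<\infty$, is \textbf{the main obstacle}. It does not follow from finite volume alone. I would try to replace it with $\int_{D_u}x_2\omega_n\le\mu$-type control, which is uniformly bounded but not necessarily uniformly small in the tail. Failing that, I would add a truncation in $x_2$, using the bound $\|x_2\omega_n\|_1^{1/2}\|\omega_n\|_1^{1/2}\|r\|_1^{1/2}\|r\|_2^{1/2}$ from \eqref{ineq:estimate of energy crossing term}, where $\|r\|_1$ on the far part of $D_u$ is small by finite volume. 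This truncated route works provided $\sup_n\|x_2\omega_n\|_1<\infty$, which holds for the minimizing sequences of interest.

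\emph{Step 4: the bounded piece.} On $D_0\times D_0$ the kernel satisfies $G\le G_H\in L^2(D_0\times D_0)$, because of the logarithmic singularity and boundedness of $D_0$. Hence $\omega_n1_{D_0}\otimes\omega_n1_{D_0}\rightharpoonup\omega1_{D_0}\otimes\omega1_{D_0}$ weakly in $L^2(D_0\times D_0)$. This product convergence needs justification: the operator with kernel $G1_{D_0\times D_0}$ is Hilbert–Schmidt, hence compact, so $G\omega_n1_{D_0}\to G\omega1_{D_0}$ strongly in $L^2(D_0)$. Pairing this strong convergence with the weak convergence of $\omega_n1_{D_0}$ gives convergence of the bounded piece of the energy.

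Combining Steps 3 and 4 with the uniform smallness of the remainders and letting $\epsilon\to0$ yields $E[\omega_n]\to E[\omega]$.
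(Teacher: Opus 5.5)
Your proposal has two genuine gaps, and each one quietly changes the hypotheses of the proposition. \emph{First, Step 1 reduces $p$ in the wrong direction.} On $D_l$ we have $x_2<1$, so $x_2^p$ decreases as $p$ increases. Hence $\int_{D_l}x_2^p\,dx<\infty$ is a \emph{weaker} assumption for larger $p$. The paper's ``without loss of generality $p\ge 2$'' means an exponent below $2$ may be raised to $2$; it does not mean a large exponent may be lowered to $2$. For example, $D_l=\{0<x_2<\min(1,|x_1|^{-1/5})\}$ has $\int_{D_l}x_2^{10}\,dx<\infty$ but $\int_{D_l}x_2^{2}\,dx=\infty$, so your Cauchy--Schwarz bound against $\|\omega_n\|_2$ is unavailable. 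The interpolation you mention as a fallback is exactly what is needed, and it is the paper's argument. With $p'=p/(p-1)\in(1,2]$,
\[
\int_{D_l\cap\{|x_1|>N\}}x_2|\omega_n|\,dx\le\Big(\int_{D_l\cap\{|x_1|>N\}}x_2^p\,dx\Big)^{1/p}\|\omega_n\|_1^{1-2/p}\|\omega_n\|_2^{2/p}.
\]
The first factor is independent of $n$ and tends to $0$ by dominated convergence, while the second is at most $M$.

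\emph{Second, Step 3 needs $\sup_n\|x_2\omega_n\|_1<\infty$, which is not assumed}; the statement gives only $L^1$ and $L^2$ bounds. As written, you therefore prove a weaker statement. The obstruction comes from choosing a bounded $D_0$, which leaves a remainder in the far part of $D_u$, where the weight $x_2$ is unbounded. The paper never truncates $D_u$, nor a bottom strip $\{x_2\le\delta\}$. It takes $D_0=D_u\cup(D_l\cap\{|x_1|\le N\})$, so the only remainder is the $D_l$-tail, whose weighted mass is uniformly $\le\epsilon^2$ by \eqref{ineq:concentration of bdd seq}. In \eqref{ineq:estimate of energy crossing term} the weight appears on only one factor, so the paper places it on the tail factor. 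This bounds the cross term, and via \eqref{ineq:estimate of kinetic energy} the self-energy of the tail, by $O(M^{3/2}\epsilon)$, without ever using a weighted bound on $\omega_n 1_{D_0}$. The $D_0\times D_0$ piece is then handled as in your Step 4; your Hilbert--Schmidt compactness argument and the paper's weak convergence of $\omega_n\otimes\omega_n$ are interchangeable.

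This relocates the difficulty you noticed rather than removing it. $D_0$ has finite measure but need not be bounded, and the paper takes $G\in L^2(D_0\times D_0)$ for granted. The comparison $G\le G_H$ gives this immediately when $D_u$ is bounded, but not in general, since $G_H$ grows logarithmically with height. So your instinct about the upper region is sound, but adding a hypothesis does not prove the proposition as stated.
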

\noindent\textit{Proof:}
Suppose that \eqref{ineq:concentration of bdd seq} is false. Then there exists $\epsilon_0>0$ and a subsequence $\{\omega_{n_k}\}$ such that for any $k \in \mathbb{Z}_{\geq 1}$,
\begin{align*}
    \int_{D_l \cap \{|x_1|>k\}} x_2 \omega_{n_k} dx > \epsilon_0.
\end{align*}

Thus 
\begin{align*}
    \epsilon_0 < \int_{D_l \cap \{|x_1|>k\}} x_2 \omega_{n_k} dx \leq  \Bigg(\int_{D_l \cap \{|x_1|>k\}} x_2^p dx\Bigg)^{1/p}\Bigg(\int_{D_l \cap \{|x_1|>k\}} \omega_{n_k}^q dx\Bigg)^{1/q}.
\end{align*}
where $\frac{1}{p}+\frac{1}{q}=1$ with $p\geq 2$, we see $q\in (1,2]$. By the weak finite volume condition \eqref{def:weak_finite_volume_domain_assumption} for $D_l$,
\begin{align*}
    \int_{D_l \cap \{|x_1|>k\}} x_2^p dx \rightarrow 0, \tt{ as } k \rightarrow \infty.
\end{align*}
Then 
\begin{align*}
    \|\omega_{n_k}\|_{q} \geq (\int_{D_l \cap \{|x_1|>k\}} \omega_{n_k}^q dx)^{1/q} \geq \epsilon_0 \Big(\int_{D_l \cap \{|x_1|>k\}} x_2^p dx\Big)^{-1/p}\rightarrow +\infty.
\end{align*}
However, according to our assumption: $\|\omega_{n_k}\|_{1}+ \|\omega_{n_k}\|_{2}\leq M$, then $\|\omega_{n_k}\|_{q}\leq 2M$ for any $q\in (1,2]$, a contradiction arises.

Based on \eqref{ineq:concentration of bdd seq}, for any $\epsilon>0$, and define $$
D_0=D_u\cup(D_l \cap \{|x_1|\leq N\}),
$$
where $N=N(\epsilon^2)$ is the constant given in the previous step. By properly increasing $N$, assume
\begin{align*}
    \int_{\D\backslash D_0} x_2\omega dx\leq \epsilon^2.
\end{align*}
Then 
\begin{align*}
    2|E[\omega_n] & - E[\omega]| \leq 2|E[\omega_n|_{D_0}]-E[\omega|_{D_0}]| \\
     & + 2\int_{\mathcal{D}}\int_{\mathcal{D}} G(x,y)\omega_n|_{D_0}(x)\omega_n|_{\mathcal{D}\backslash D_0}(y)dydx + E[\omega_n|_{\mathcal{D}\backslash D_0}]\\
     & + 2\int_{\mathcal{D}}\int_{\mathcal{D}} G(x,y)\omega|_{D_0}(x)\omega|_{\mathcal{D}\backslash D_0}(y)dydx + E[\omega|_{\mathcal{D}\backslash D_0}].
\end{align*}

For last four terms, by \eqref{ineq:estimate of kinetic energy}, \eqref{ineq:estimate of energy crossing term} and the fact that $\sup_n\|x_2\omega_n|_{\mathcal{D}\backslash D_0}\|_1\leq \epsilon^2$, $\|x_2 \omega|_{\mathcal{D}\backslash D_0}\|_1\leq \epsilon^2$, then 
\begin{align*}
    |E[\omega_n] & - E[\omega]| \leq |E[\omega_n|_{D_0}]-E[\omega|_{D_0}]| + CM^2\epsilon.
\end{align*}

For the first term, since $G(x,y) \in L^2(D_0 \times D_0)$ and $\omega_n(x)\omega_n(y) \rightharpoonup \omega(x)\omega(y)$ in $L^2(D_0 \times D_0)$, sending $n\to\infty$
\begin{align*}
    E[\omega_n|_{D_0}] = & \int_{D_0}\int_{D_0} {G}(x,y)\omega_n(x)\omega_n(y)dydx \\
    & \longrightarrow \int_{D_0}\int_{D_0} {G}(x,y)\omega(x)\omega(y)dydx = E[\omega|_{D_0}].
\end{align*}

In conclusion, as $n$ is sufficiently large, 
\begin{align*}
    |E[\omega_n] & - E[\omega]| \leq 2CM^2\epsilon\rightarrow 0.
\end{align*}

\rightline{$\Box$}

Proposition \ref{prop:weak_finite_volume_energy_convergence} provides a crucial ingredient: the kinetic energy is continuous with respect to weak convergence in \(L^2\) provided the sequence satisfies a uniform bound. This allows us to pass to the limit in the minimization problem, hence proving the existence of a minimizer and a general convergence theorem.

\begin{thm}\label{thm:exists of minimizer}
    Let $\nu>0$, and $\mu,\lambda$ satisfy \eqref{ineq:negativity of I_mu, general nu}. For any minimizing sequence $\{\omega_n\}$ satisfying $\omega_n\in K_{\mu_n,\nu}$, $\mu_n \rightarrow \mu $ and $-E_{2,\lambda}[\omega_n] \rightarrow I_{\mu,\nu,\lambda}$, then there exists a subsequence, still denoted by $\{\omega_n\}$, such that there exists $\omega\in K_{\mu,\nu}$, 
    $\omega_n\rightarrow\omega$ and $x_2\omega_n\rightarrow x_2\omega$ strongly in $L^2(\mathcal{D})$ and $L^1(\mathcal{D})$ respectively. In particular $\omega\in S_{\mu,\nu,\lambda}$ and hence $S_{\mu,\nu,\lambda}\neq \emptyset$.
\end{thm}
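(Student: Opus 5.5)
We reduce to $\nu=1$ by the scaling $\hat\omega=\omega/\nu$, which maps $K_{\mu,\nu}$ to $K_{\mu/\nu,1}$ and multiplies $E_{2,\lambda}$ by $\nu^2$. The plan is the standard direct method combined with Proposition \ref{prop:weak_finite_volume_energy_convergence}.

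\emph{Bounds and a weak limit.} By Remark \ref{remark:uniformly bounded}, the sequence $\{\omega_n\}$ is bounded in $L^2(\D)$, and by construction $\|\omega_n\|_1\le 1$ and $\|x_2\omega_n\|_1=\mu_n\to\mu$. We pass to a subsequence with $\omega_n\rightharpoonup\omega$ weakly in $L^2(\D)$. Weak limits preserve $\omega\ge0$, and Fatou-type lower semicontinuity (test against $1_B$ for bounded $B$, then let $B\uparrow\D$) gives $\|\omega\|_1\le1$, $\|x_2\omega\|_1\le\mu$ and $\|\omega\|_2\le\liminf\|\omega_n\|_2$. The hypotheses of Proposition \ref{prop:weak_finite_volume_energy_convergence} then hold, so $E[\omega_n]\to E[\omega]$.

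\emph{The first moment is preserved.} This is the main obstacle: we must show $\int x_2\omega=\mu$, i.e.\ that no moment escapes to infinity. We split $\D=D_0\cup(\D\setminus D_0)$ with $D_0=D_u\cup(D_l\cap\{|x_1|\le N\})$. By \eqref{ineq:concentration of bdd seq}, $\sup_n\int_{\D\setminus D_0}x_2\omega_n\le\eps$.

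On $D_0$ the weight $x_2 1_{D_0}$ must lie in $L^2$ so that we can pass to the weak limit. On the bounded piece $D_l\cap\{|x_1|\le N\}$ this is clear, since $x_2<1$ there. On $D_u$, however, we only know $\mathrm{Vol}(D_u)<\infty$, and $x_2$ may be unbounded. We therefore also truncate at height $R$: using $\|\omega_n\|_1\le1$ and H\"older with the $L^2$ bound,
\[
\int_{D_u\cap\{x_2>R\}} x_2\omega_n\,dx \le \Bigl(\int_{D_u\cap\{x_2>R\}}x_2^2\,dx\Bigr)^{1/2}\|\omega_n\|_2 .
\]
This requires $x_2\in L^2(D_u)$, which does not follow from finite volume alone. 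If that integrability fails, the fallback is an energy argument: mass placed at large height contributes to $\mu$ while contributing little to $E$, so the limit satisfies $\int x_2\omega=\mu'\le\mu$. We would then exclude $\mu'<\mu$ by showing that $I_{\mu,\lambda}$ is nonincreasing in $\mu$, for instance by adding a small far-away piece of vorticity to raise the moment at negligible energy cost. We must also rule out $\mu'=0$, which is excluded because $I_{\mu,\lambda}<0$ while $I_{0,\lambda}=0$.

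\emph{Strong convergence and conclusion.} Once $\omega\in K_{\mu}$ is known, we compare energies:
\[
-E_{2,\lambda}[\omega]\le\liminf\bigl(-E[\omega_n]+\tfrac1{2\lambda}\|\omega_n\|_2^2\bigr)=I_{\mu,\lambda}\le -E_{2,\lambda}[\omega].
\]
Hence $\omega\in S_{\mu,\lambda}$ and $\|\omega_n\|_2\to\|\omega\|_2$, which together with weak convergence yields strong $L^2$ convergence. For $x_2\omega_n\to x_2\omega$ in $L^1$, we combine the tail estimates ($\sup_n\int_{\D\setminus D_0}x_2\omega_n\le\eps$, plus the height truncation) with strong $L^2$ convergence on the finite-measure set $D_0\cap\{x_2\le R\}$, on which $x_2$ is bounded. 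Letting $\eps\to0$ then completes the proof.
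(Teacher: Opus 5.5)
You have found the real weak point, which the paper's own proof skips over. The paper asserts that $x_2\omega_n\rightharpoonup x_2\omega$ weakly in $L^1$ and then tests against $1_{\mathcal{D}}$ to get $\int x_2\omega=\mu$; but a bounded sequence in $L^1$ need not have such a subsequence. Your fallback, however, does not close the gap. Suppose the weak limit has moment $\mu'<\mu$. Energy convergence and lower semicontinuity give $I_{\mu',\lambda}\le -E_{2,\lambda}[\omega]\le I_{\mu,\lambda}$, and ``nonincreasing in $\mu$'' gives $I_{\mu',\lambda}\ge I_{\mu,\lambda}$. Together these yield only $I_{\mu',\lambda}=I_{\mu,\lambda}$, which is no contradiction. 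You would need \emph{strict} decrease in $\mu$. The paper proves that only in the strip and only for small $\mu$ (Proposition~\ref{prop:decreasing of I_mu}), and the proof relies on horizontal translations, compact support of minimizers (via $W>0$) and a uniform $L^1$ bound. None of these is available in Class~\ref{def:case Omega=D}.

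In fact the gap cannot be closed in the stated generality, and your own far-away-piece device shows why. Suppose $x_2\notin L^2(D_u)$. Duality then gives $\rho_R\ge0$ supported in $D_u\cap\{x_2>R\}$ with $\int x_2\rho_R=\delta$, $\|\rho_R\|_2\to0$ and $\|\rho_R\|_1\le\delta/R$. Adding $\rho_R$ never lowers $E$, because $G\ge0$. So suppose $I_{\cdot,\lambda}$ is flat on $[\mu',\mu]$ and attains its value at some minimizer $\omega'$ at $\mu'$. Then $(1-\eta_n)\omega'+\rho_{R_n}$, with $\eta_n\to0$, is a minimizing sequence in $K_\mu$ converging in $L^2$ to $\omega'$, and no subsequence can satisfy the conclusion.

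Flatness does occur. Take the box $K_0=(-1,1)\times(0,1)$ with the spike $\{x_2\ge1,\ |x_1|<\tfrac12 x_2^{-3/2}\}$ attached. This domain has finite volume, and one checks the following:
\begin{itemize}
\item The maximizer $\omega^\sharp$ of $E_{2,\lambda}$ over $\{\omega\ge0,\ \|\omega\|_1\le1\}$ (no moment constraint) exists.
\item It has $\gamma>0$, since $\gamma=0$ would force $E_{2,\lambda}[\omega^\sharp]=0$.
\item It has bounded support, since $\psi^\sharp$ decays exponentially up the thin spike, so its moment $\mu^\sharp$ is finite.
\item Consequently $I_{\mu,\lambda}=I_{\mu^\sharp,\lambda}$ for all $\mu\ge\mu^\sharp$.
\end{itemize}
Meanwhile $\int_{D_u}x_2\,dx=\infty$, so once $\lambda>\lambda_{K_0}$ every $\mu>0$ is admissible in \eqref{ineq:negativity of I_mu, general nu}: take $K=K_0$ together with a high piece of the spike.

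So your argument is a complete proof only under an extra hypothesis such as $x_2\in L^2(D_u)$, for example when $D_u$ is bounded. In that case your H\"older truncation gives uniform vertical tightness and $\int x_2\omega=\mu$ follows. The remaining steps are then correct: the energy comparison, strong $L^2$ convergence, and weighted $L^1$ convergence on $D_0\cap\{x_2\le R\}$ plus the tails.
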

\noindent\textit{Proof:} It is sufficient to prove the case $\nu=1$. Let $\{\omega_n\}$ be a minimizing sequence such that $\omega_n\in K_{\mu_n}$, $\mu_n \rightarrow \mu $ and $-E_{2,\lambda}[\omega_n] \rightarrow I_{\mu,\lambda}$ as $n\rightarrow+\infty$. Since $\{\omega_n\}$, $\{x_2\omega_n\}$ are uniformly bounded in $L^2$, $L^1$ respectively, then there exists a subsequence still denoted by $\{\omega_n\}$, $\omega_n\rightharpoonup\omega, x_2\omega_n\rightharpoonup x_2 \omega$ in $L^2$, $L^1$ respectively for some $\omega$. We shall proof that $\omega$ is a minimizer of $I_{\mu,\lambda}=\inf_{\omega\in K_{\mu}}\{-E_{2,\lambda}[\omega]\}$ in two steps. 

\underline{\textit{Step 1:}} $\omega\in K_{\mu}$.

For any $R>0, $ since $ B(0,R)\cap \mathcal{D}$  has finite measure, then by weak convergence, 
$$\int_{B(0,R)\cap \mathcal{D}}\omega dx=\lim_{n\rightarrow\infty}\int_{B(0,R)\cap \mathcal{D}}\omega_n dx\leq 1,$$
by the Monotone Convergence Theorem, 
$$\int_{\mathcal{D}}\omega dx=\lim_{R\rightarrow\infty}\int_{B(0,R)\cap \mathcal{D}}\omega dx\leq 1.$$

In addition to this, by $x_2\omega_n$ converge weakly to $x_2\omega$ in $L^1$ and $1_{\mathcal{D}}\in  L^{\infty}(\mathcal{D}),$
\begin{align*}
    \int x_2 \omega_n \cdot 1_{\mathcal{D}} dx=\mu_n\rightarrow \mu=\int x_2 \omega_n \cdot 1_{\mathcal{D}} dx,
\end{align*}
thus $\omega \in K_{\mu}$.

\underline{\textit{Step 2:}} Strong convergence and conclude the proof.

Recalling that $\{\omega_n\}$ is uniformly bounded in $L^2$ and $L^1$, $\omega_n$ converge weakly to $\omega$ in $L^2$, then this sequence meet the assumption in the Proposition \ref{prop:weak_finite_volume_energy_convergence}, see $E[\omega_n] \rightarrow E[\omega]$. Combining with Step 1, 
\begin{align}
    -I_{\mu,\lambda}=\lim_{n\rightarrow\infty}E_{2,\lambda}[\omega_n]\leq \lim_{n\rightarrow\infty}E[\omega_n]-\frac{1}{2\lambda}\liminf_{n\rightarrow\infty}\|\omega_n\|_{2}^2 \leq E_{2,\lambda}[\omega]\leq -I_{\mu,\lambda} \label{eqn:conclusion of claim 1}
\end{align}

Thus 
\begin{align*}
    -E_{2,\lambda}[\omega] = I_{\mu,\lambda}, && \|\omega\|_{2}=\lim_{n\rightarrow\infty}\|\omega_n\|_{2},
\end{align*}
the former shows $\omega$ is a minimizer of $I_{\mu,\lambda}=\inf_{\omega\in K_{\mu}}\{-E_{2,\lambda}[\omega]\}$ in $K_{\mu}$, the later show $\omega_n$ converge to $\omega$ strongly in $L^2$. 

In addition to that, to prove the strong $L^1$ convergence of $x_2\omega_n$, we separate $\mathcal{D}$ into two subdomains: $D_0, $ and $ \mathcal{D}\backslash D_0$, where $D_0$ is the set used in the Proposition \ref{prop:weak_finite_volume_energy_convergence}.

For the second part, by the choice of $N$ and triangle inequality,
\begin{align*}
    \sup_n\int_{\D\backslash D_0}x_2|\omega_n-\omega|dx \leq 2\epsilon. 
\end{align*}

For the first part, by $|D_0|<\infty$ and the strong $L^2$ convergence of $\omega_n$, $\omega_n|_{D_0}$ converge to $\omega|_{D_0}$ strongly in $L^1$. There exists a subsequence, still denoted by $\omega_n$ s.t. $\omega_n|_{D_0}$ converge to $\omega|_{D_0}$ pointwisely. By Fatou's lemma:
\begin{align*}
    2\int_{D_0} x_2\omega ~ dx \leq & \liminf_n \int_{D_0} (x_2\omega_n+x_2\omega-x_2|\omega_n-\omega|)dx,\\
    \leq & \mu+ \int_{D_0} x_2\omega ~ dx-\limsup_n \int_{D_0} x_2|\omega_n-\omega|dx,
\end{align*}
then by the choice of $D_0$
\begin{align*}
    \mu =& \int_{D_0} x_2\omega ~ dx, +\int_{\D\backslash D_0} x_2\omega ~ dx\\
    \leq & \mu + \eps -\limsup_n \int_{D_0} x_2|\omega_n-\omega|dx,
\end{align*}
so that,
\begin{align*}
    \limsup_n \int_{D_0} x_2|\omega_n-\omega|dx\leq \eps.
\end{align*}

Hence, combine these two parts,
\begin{align*}
    \limsup_n \int_{\D} x_2|\omega_n-\omega|dx\leq 3\eps,
\end{align*}
while the left hand side is independent from $\eps$. Passing $\eps\to 0$, we conclude $x_2\omega_n\rightarrow x_2 \omega$ in $L^1(\mathcal{D})$.

\rightline{$\Box$}

\section{General Convergence Theorem for the Class \ref{def:case Omega=S}}\label{sec:general convergence for class 2}

\subsection{Existence of Minimizer}\label{sec:exists of minimizer in strip}

To obtain the same convergence of kinetic energy as \cite{abe2022stability}, it is necessary to provide a strictly negative upper bound for $I_{\mu,\lambda}$ and demonstrate that the variational problem \eqref{def:minimizing problem} admits a minimizer.
% To apply the method in \cite{abe2022stability}, which based on the strictly decreasing of $I_{\mu,\lambda}$ (Proposition \ref{prop:decreasing of I_mu}) and the concentration-compactness principle later, it is necessary to ensure that $I_{\mu,\lambda}$ is negative and the variational problem \eqref{def:minimizing problem} admits a minimizer. 
As noted in Proposition \ref{prop:propoties of I_mu}, the negativity of $I_{\mu,\lambda}$ depends on finding a suitable trial function supported on a set $K \subset \mathcal{S}$. 

For our specific domain $\S$, it is sufficient to consider $K=(0, 10L) \times (0, L)$, which comfortably fits within the infinite strip $\S=\mathbb{R}\times(0,L)$. The primary motivation for this choice is to obtain a concrete and strictly positive lower bound for $-I_{\mu, \lambda}$ using the explicit Green's function \eqref{def:green's function for strip} and the choice of $(\mu,\lambda)$. 
% We now estimate the penalized energy of the trial function $\omega_1 = c_1 \mathbf{1}_{K}$ under a proper range of parameters $(\mu, \lambda)$ for which $I_{\mu, \lambda} < 0$. 
Indeed, we take
\begin{align}
    (\mu,\lambda)\in (0,L/2) \times [\frac{\pi^5}{2L^2},\infty). \label{ineq:negativity of I_mu, strip}
\end{align}
with a trial function
\begin{align*}
    \omega_0=\frac{\mu}{5L^3} 1_{(0,10L)\times (0,L)}=c_0 1_{(0,10L)\times (0,L)},
\end{align*}
so that, 
\begin{align*}
    &-I_{\mu,\lambda} \geq  E_{2,\lambda }[\omega_0]  \\
    = & \frac{c_0^2}{2}\Bigg( \int_{(0,10L)\times (0,L)}\int_{(0,10L)\times (0,L)}G(x,y)dxdy - \frac{10L^2}{\lambda}\Bigg)\\
    = & \frac{c_0^2}{2}\Bigg( \frac{1}{4\pi }\int_{K}\int_{K}\ln\Big(1+\frac{\sin(\frac{\pi}{L}x_2)\sin(\frac{\pi}{L}y_2)}{\sinh(\frac{\pi}{2L}(x_1-y_1))^2+\sin(\frac{\pi}{2L}(x_2-y_2))^2}\Big)dxdy - \frac{10L^2}{\lambda}\Bigg)\\
    \geq & \frac{c_0^2}{2}\Bigg( \frac{1}{4\pi }\int_{K}\int_{K}\ln\Big(1+\sin(\frac{\pi}{L}x_2)\sin(\frac{\pi}{L}y_2)\exp(-\frac{\pi}{L}|x_1-y_1|)\Big)dxdy - \frac{10L^2}{\lambda}\Bigg).
\end{align*}
By $\ln(1+t)\geq \frac{1}{2}t$ for $t\in(0,1)$ and $\sin(\frac{\pi}{L}x_2)\sin(\frac{\pi}{L}y_2)\exp(-\frac{\pi}{L}(x_1-y_1))\leq 1$,
\begin{align}
    -I_{\mu,\lambda}& \notag  \\
    % \geq & \frac{c_1^2}{2}\Bigg( \frac{1}{8\pi }\int_{0}^{L}\int_{0}^{L}\int_{0}^{10L}\int_{0}^{10L}\sin(\frac{\pi}{L}x_2)\sin(\frac{\pi}{L}y_2)\exp(-\frac{\pi}{L}|x_1-y_1|)dx_1dy_1dx_2dy_2 - \frac{TL}{\lambda}\Bigg)\\
    \geq & \frac{c_0^2}{2}\Bigg( \frac{1}{8\pi }\Big(\int_{0}^{L}\sin(\frac{\pi}{L}x_2)dx_2\Big)^2\int_{0}^{10L}\int_{0}^{10L}\exp(-\frac{\pi}{L}|x_1-y_1|)dx_1dy_1 - \frac{10L^2}{\lambda}\Bigg) \notag \\
    = & \frac{c_0^2}{2}\Bigg( \frac{1}{8\pi }\Big(\frac{2L}{\pi}\Big)^2 \Big( \frac{2L}{\pi} \cdot \frac{L}{\pi}(\exp(-10\pi)-1)+\frac{20L^2}{\pi}\Big) - \frac{10L^2}{\lambda}\Bigg), \notag \\
    \geq & \frac{c_0^2}{2}\Bigg( \frac{1}{8\pi }\Big(\frac{2L}{\pi}\Big)^3 \Big( -\frac{L}{\pi}+ 10L\Big) - \frac{10L^2}{\lambda}\Bigg) \notag \\
    > & 5c_0^2L^2\Bigg( \frac{3L^2}{\pi^5 } - \frac{1}{\lambda}\Bigg) \notag \\
    = & \frac{5 c_0^2 L^4}{\pi^{5}}=\frac{\mu^2}{5 \pi^5 L^2}>0. \label{ineq:lower bound of -I_mu, lmabda}
\end{align}

% Thus, we have shown that for all $(\mu, \lambda)$ satisfying \eqref{ineq:negativity of I_mu, strip}, $I_{\mu,\lambda} \leq -E_{2,\lambda}[\omega_1] \leq -\frac{\mu^2}{5\pi^5 L^2} < 0$. 
This explicit bound is essential for proving the subsequent properties, such as the propagation speed $W$ (Corollary 4.3) and the strict decrease of $I_{\mu, \lambda}$ (Proposition \ref{prop:strip energy_convergence}). 
With such an estimate, we can now proceed to study more properties of a potential minimizer. A fundamental step is to understand the behavior of the stream function $\psi$, particularly its decay at infinity.

\begin{prop}\label{prop:kinetic energy and stream function, and decay} For $\omega\in L^2 \cap L^1(\S)$ satisfying $x_2\omega\in L^1(\S)$ and $\omega \geq 0 ~ (\omega \not\equiv 0)$, the stream function \eqref{def:stream function} satisfies $\psi>0$ and 
\begin{align}
    \psi \rightarrow 0, \tt{ as } |x_1|\rightarrow \infty, \label{eqn:decay of stream function}
    \end{align}
    as well as
    \begin{align}
    E[\omega]=\frac{1}{2}\|\nabla\psi\|_2^2. \label{eqn:kinetic energy and stream fct}
\end{align}
\end{prop}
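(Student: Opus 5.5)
The plan is to work directly with the explicit Green's function \eqref{def:green's function for strip}, treating the three claims in the order positivity, decay, energy identity.

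\emph{Positivity.} For $x,y\in\S$ we have $\sin(\frac{\pi}{L}x_2)\sin(\frac{\pi}{L}y_2)>0$. Hence the argument of the logarithm in \eqref{def:green's function for strip} exceeds $1$, and $G(x,y)>0$ for all $x,y\in\S$. Since $\omega\ge0$ and $\omega\not\equiv0$, it follows that $\psi(x)=\int_\S G(x,y)\omega(y)\,dy>0$ for every $x\in\S$. Finiteness of $\psi$ is already given by \eqref{ineq:estimate of stream fct}.

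\emph{Decay.} Fix $\epsilon>0$ and choose $R$ so large that $\int_{\{|y_1|>R\}}\omega\,dy$ is small. Split
\[
\psi(x)=\int_{|y_1|\le R}G(x,y)\omega(y)\,dy+\int_{|y_1|>R}G(x,y)\omega(y)\,dy .
\]
For the tail, apply the argument behind \eqref{ineq:estimate of stream fct} to $\omega 1_{\{|y_1|>R\}}$, using the bound $G\le G_H$ from \eqref{ineq:estimate of green's function}. This gives a bound $C L^{1/2}\|\omega1_{|y_1|>R}\|_1^{1/2}\|\omega\|_2^{1/2}$, which is small uniformly in $x$ since $x_2\le L$. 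For the near part, take $|x_1|>R+1$. Then $|x_1-y_1|\ge |x_1|-R\ge 1$, and using $\ln(1+t)\le t$ together with $\sinh(s)^2\ge \frac14 e^{2|s|}(1-e^{-2|s|})^2$ we get
\[
G(x,y)\le C_L e^{-\frac{\pi}{L}(|x_1|-R)} .
\]
Hence the near part is at most $C_L e^{-\frac{\pi}{L}(|x_1|-R)}\|\omega\|_1$, which tends to $0$ as $|x_1|\to\infty$. Combining the two pieces gives \eqref{eqn:decay of stream function}.

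\emph{Energy identity.} Here $\psi$ is the weak solution of $-\Delta\psi=\omega$ in $\S$ with $\psi=0$ on $\p\S$, so formally $\int\psi\omega=\int|\nabla\psi|^2$ after integrating by parts. To make this rigorous, first show $\nabla\psi\in L^2(\S)$. Write $\psi=G_{\R^2}*\omega+h$, or more directly bound $\nabla_xG$ via the explicit formula; either way $|\nabla_x G(x,y)|\lesssim |x-y|^{-1}$ near the diagonal and decays exponentially in $|x_1-y_1|$. This makes $\nabla_x G(x,\cdot)$ uniformly in $L^{q}$ for $q<2$, and Young/Hardy–Littlewood–Sobolev type bounds with $\omega\in L^1\cap L^2$ then give $\nabla\psi\in L^2$. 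Next, multiply by a cutoff $\chi_R(x_1)$ equal to $1$ on $[-R,R]$, supported in $[-2R,2R]$, with $|\chi_R'|\le C/R$. Integrating by parts on the bounded rectangle, where the boundary terms vanish on $x_2\in\{0,L\}$ since $\psi=0$ there, gives
\[
\int\chi_R\psi\omega=\int\chi_R|\nabla\psi|^2+\int\psi\,\chi_R'\,\p_1\psi .
\]
The error term is bounded by $\frac{C}{R}\|\psi\|_{L^2(R<|x_1|<2R)}\|\nabla\psi\|_2$. Using the Poincaré inequality in $x_2$ on the strip, $\|\psi\|_{L^2(\S)}\le \frac{L}{\pi}\|\nabla\psi\|_{L^2(\S)}$, the error is $O(1/R)$. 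Letting $R\to\infty$ by monotone/dominated convergence, with $\psi\omega\in L^1$ by \eqref{ineq:estimate of stream fct}, yields $2E[\omega]=\|\nabla\psi\|_2^2$.

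The main obstacle is the rigorous justification that $\nabla\psi\in L^2$ and the boundary regularity needed to integrate by parts on a non-compact domain. I expect to handle it with the explicit derivative bounds on \eqref{def:green's function for strip}, or alternatively by approximating $\omega$ by compactly supported smooth functions, for which everything is classical, and passing to the limit using \eqref{ineq:estimate of energy difference}.
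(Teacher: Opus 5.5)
Your proof is correct, but it takes a different route from the paper in both nontrivial steps.

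\textbf{Decay.} The paper never uses the explicit kernel. It invokes the comparison $0<\psi\le\psi^H$ (from $G\le G_H$) and imports $\psi^H\to0$ at infinity from Proposition~2.2 of \cite{abe2022stability}. Your splitting is self-contained: the tail is controlled uniformly by \eqref{ineq:estimate of stream fct}, and the near part by $G(x,y)\le C_L e^{-\frac{\pi}{L}|x_1-y_1|}$ for $|x_1-y_1|\ge 1$. This even gives an explicit exponential rate for compactly supported pieces. Your direct check that $G>0$ also supplies the positivity of $\psi$ that the paper leaves implicit.

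\textbf{Energy identity.} The paper uses a radial cutoff $\theta_R$ and integrates the cross term by parts a second time, $\int\psi\nabla\psi\cdot\nabla\theta_R=-\frac12\int\psi^2\Delta\theta_R$. This is $O(\|\psi\|_\infty^2L/R)$, because $\Delta\theta_R=O(R^{-2})$ on a set of area $O(RL)$. Monotone convergence then gives finiteness of $\|\nabla\psi\|_2$ and the identity at the same time, so no a priori gradient bound is needed.

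Your version needs $\nabla\psi\in L^2$ beforehand, and you only sketch that step. The claim is true: Schur's test applies, since $|\nabla_xG(x,y)|$ is integrable in each variable uniformly in the other; alternatively, $\|\nabla\psi\|_2\le\frac{L}{\pi}\|\omega\|_2$ holds for the $H^1_0$ solution. However, you can remove your ``main obstacle'' entirely. Write $\psi\chi_R'\p_1\psi=\frac12\chi_R'\p_1(\psi^2)$ and integrate by parts once more, exactly as the paper does. The error becomes $-\frac12\int\chi_R''\psi^2=O(\|\psi\|_\infty^2L/R)$, which needs only the boundedness of $\psi$ you already have.
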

\begin{proof} \eqref{eqn:decay of stream function} can be conclude by the observation $0<\psi\leq \psi^H$ in $S$ and $\psi^H\rightarrow 0$ as $|x|\rightarrow \infty$ (see the Proposition 2.2 in \cite{abe2022stability}).

As for the \eqref{eqn:kinetic energy and stream fct}, we take a non-increasing function $\theta \in C_C[0,\infty)$ satisfying $\theta=1$ in $[0,1]$, vanishes in $[2,\infty)$ and set the cut-off function by $\theta_R(x)=\theta(|x|/R)$ in $\S$. Since $-\Delta \psi = \omega $ in $\S$ and $\psi(x_1,0)=\psi(x_1,L)=0$, by multiplying $\psi \theta_R$ by $-\Delta \psi = \omega $ and integration by parts, 
\begin{align*}
    \int_{\S}\psi \omega \theta_R=\int_\S|\nabla\psi|^2\theta_R -\frac{1}{2}\psi^2\Delta \theta_R.
\end{align*}
Since $\psi \rightarrow 0$ as $|x_1|\rightarrow 0$ by \eqref{eqn:decay of stream function}, the second term vanishes as $R\rightarrow \infty$. Hence \eqref{eqn:kinetic energy and stream fct} follows from the monotone convergence theorem.

\end{proof}

As in \cite{abe2022stability}, the positivity of $W$ will be shown to imply compactness of support for minimizers. To demonstrate it, it is necessary to study the regularity of $\psi$. We denote by $\BUC(\S)$ the space of all bounded uniformly continuous functions in $\S$. For an integer $k\geq 0, \BUC^{k+\alpha}(\S)$ denotes the space of all $\psi \in \BUC(\S)$ such that $\p_x^l \psi \in \BUC(\S) \cap C^\alpha(\S)$, for $|l| \leq k$.

\begin{prop}\label{prop:psi/x2 decay}
For any $(\mu,\lambda)$ given in \eqref{ineq:negativity of I_mu, strip}, $\omega \in S_{\mu,\lambda}$, the stream function satisfies $\psi \in \BUC^{2+\alpha}(\overline{\S})$, $0<\alpha<1$, $\psi/x_2 \in \BUC^{1+\alpha}(\S)$, and
\begin{align}
    \frac{\psi(x)}{x_2} \to 0 \quad \text{as } |x_1| \to \infty.\label{eqn:psi/x2 decay}
\end{align}
\end{prop}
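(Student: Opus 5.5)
The plan is to first obtain regularity of $\omega$ from the structure result, then bootstrap elliptic estimates on the strip, and finally use the decay \eqref{eqn:decay of stream function} together with uniform $C^{1+\alpha}$ bounds to get the decay of $\psi/x_2$.

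\emph{Step 1: $\psi$ is bounded and $\omega\in L^\infty$.} By Proposition \ref{prop:general structure}, $\omega=\lambda(\psi-Wx_2-\gamma)_+$. Estimate \eqref{ineq:estimate of stream fct} gives $0\le\psi\le C x_2^{1/2}\|\omega\|_1^{1/2}\|\omega\|_2^{1/2}\le CL^{1/2}$ on $\S$, since $x_2<L$. Hence $\psi\in L^\infty(\S)$. Because $x_2$ is bounded on $\S$, we also get $0\le\omega\le\lambda(\|\psi\|_\infty+|W|L+|\gamma|)$, so $\omega\in L^1\cap L^\infty(\S)$.

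\emph{Step 2: elliptic bootstrap.} The function $\psi$ solves $-\Delta\psi=\omega$ in $\S$ with $\psi=0$ on $\{x_2=0\}\cup\{x_2=L\}$. Since the boundary is flat, local $W^{2,q}$ estimates up to the boundary hold uniformly on unit boxes $Q_k=(k-1,k+1)\times(0,L)$. (Alternatively, one can reflect oddly across both boundary lines to get a $2L$-periodic problem in $x_2$.) These give $\|\psi\|_{W^{2,q}(Q_k)}\le C(\|\omega\|_{L^q(Q_{k}')}+\|\psi\|_{L^\infty})$ with $C$ independent of $k$. Sobolev embedding then yields $\psi\in\BUC^{1+\alpha}(\overline{\S})$. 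Next, $\omega=\lambda(\psi-Wx_2-\gamma)_+$ is the positive part of a Lipschitz function, so it is Lipschitz and in particular uniformly $C^\alpha$. Schauder estimates on the same boxes, uniform in $k$, then give $\psi\in\BUC^{2+\alpha}(\overline{\S})$.

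\emph{Step 3: regularity of $\psi/x_2$.} Since $\psi(x_1,0)=0$, we can write
\[
\frac{\psi(x)}{x_2}=\int_0^1\p_2\psi(x_1,tx_2)\,dt .
\]
Differentiating under the integral shows $\psi/x_2\in\BUC^{1+\alpha}$ on the lower half $\{x_2<2L/3\}$, with bounds controlled by $\|\psi\|_{\BUC^{2+\alpha}}$. On $\{x_2>L/3\}$, the function $1/x_2$ is smooth and bounded, so the claim is immediate there.

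\emph{Step 4: decay \eqref{eqn:psi/x2 decay}.} This step is the main obstacle, since only the decay $\psi\to0$ from Proposition \ref{prop:kinetic energy and stream function, and decay} is known, and it must be upgraded to decay of $\p_2\psi$ near the boundary. I will use an interpolation argument. The family of translates $\psi(\cdot+k e_1)$ restricted to $\overline{Q_0}$ is bounded in $C^{2+\alpha}$, hence precompact in $C^1(\overline{Q_0})$ by Arzel\`a--Ascoli. Each limit as $|k|\to\infty$ is $0$ by \eqref{eqn:decay of stream function}. Therefore $\|\nabla\psi\|_{L^\infty(Q_k)}\to0$ as $|k|\to\infty$. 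Combined with the integral representation of Step 3, this gives $\sup_{x_2}|\psi(x)/x_2|\le\sup_{Q_k}|\p_2\psi|\to0$, which is \eqref{eqn:psi/x2 decay}.

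It is worth checking in Step 1 that the bound on $\omega$ does not require prior knowledge of the sign of $W$; it does not, because $x_2\le L$. This is the key simplification compared with the half-plane case.
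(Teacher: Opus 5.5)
Your proof is correct, but its decay step takes a different route from the paper's.

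The paper follows Proposition 2.7 of Abe--Choi. It gets regularity from global $L^q$ bounds on $\nabla^2\psi$ via the Biot--Savart law, then uniformly-local $L^p$ bounds on third derivatives from the structure equation, then Sobolev embedding. It obtains \eqref{eqn:psi/x2 decay} from Hardy's inequality $\|\psi/x_2\|_2\le 2\|\p_2\psi\|_2$ combined with $\nabla\psi\in L^2$ from the energy identity \eqref{eqn:kinetic energy and stream fct}. This gives $\psi/x_2\in\BUC\cap L^2(\S)$, and uniform continuity plus square-integrability forces decay as $|x_1|\to\infty$.

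Your regularity argument is an equivalent standard bootstrap, just organized differently. You first use $x_2<L$ (with no sign information on $W$) to get $\omega\in L^\infty$, then apply local $W^{2,q}$ and Schauder estimates on unit boxes.

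The real difference is Step 4. You never use the energy identity or Hardy's inequality. Instead, Arzel\`a--Ascoli compactness of the translates $\psi(\cdot+ke_1)|_{\overline{Q_0}}$ in $C^1$ upgrades $\psi\to0$ to $\sup_{x_2}|\nabla\psi(x_1,x_2)|\to0$, and the integral identity then bounds $\psi/x_2$. Identifying every subsequential limit as $0$ only needs pointwise decay of $\psi$ for each fixed $x_2$, so your argument is robust.

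Each route buys something different. Yours is softer and yields the stronger conclusion that $\nabla\psi$ itself decays uniformly in $x_2$. The paper's gives the quantitative integrability $\psi/x_2\in L^2(\S)$ and carries over directly from the half-plane argument, without first establishing an $L^\infty$ bound on $\omega$.
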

\begin{proof}
The proof is standard and follows the strategy of Proposition 2.7 \cite{abe2022stability}.  Since $\omega \in L^1 \cap L^2(\mathcal{S})$, the Biot-Savart law and standard elliptic estimates imply $\nabla^2 \psi \in L^q(\mathcal{S})$ for $q \in (1,2)$ and $\nabla \psi \in L^p(\mathcal{S})$ with $1/p = 1/q - 1/2$.  By the structure equation \eqref{eqn:general structure of minimizer} and the Lipschitz continuity of $f(t)=t_+$, we obtain $\partial_x^l \psi \in L^p_{\mathrm{ul}}(\overline{\mathcal{S}})$ for $|l|=3$. 
The Sobolev embedding then yields $\psi \in \mathrm{BUC}^{2+\alpha}(\overline{\mathcal{S}})$. 
The claim $\psi/x_2 \in \mathrm{BUC}^{1+\alpha}(\overline{\mathcal{S}})$ follows from $\psi(x_1,0)=0$ and the identity
\[
\frac{\psi(x_1,x_2)}{x_2} = \int_0^1 (\partial_2\psi)(x_1, x_2 s)\,ds.
\]
Finally, \eqref{eqn:psi/x2 decay} follows from \eqref{prop:kinetic energy and stream function, and decay} and Hardy's inequality as Proposition 2.7 \cite{abe2022stability}.

\end{proof}

% \begin{proof}
% The regular study is standard, and the proof of this proposition is same as the Proposition 2.7 in \cite{abe2022stability}.

% % Since $\omega \in L^1 \cap L^2$, the representation of stream function in \eqref{eqn:general structure of minimizer} implies $\nabla^2\psi \in L^q$, $q \in (1,2)$, and $\nabla \psi \in L^p$, $1/p = 1/q - 1/2$. By \eqref{eqn:general structure of minimizer} and \eqref{eqn:decay of stream function}, $\psi$ satisfies
% % \begin{equation}\label{eqn:psi system}
% % \begin{aligned}
% %     -\Delta\psi(x) &= \lambda (\psi - W x_2 - \gamma)_+ \quad \text{in } S,\\
% %     \psi &= 0 \quad \text{on } \partial S,\\
% %     \psi &\to 0 \quad \text{as } |x_1| \to \infty.
% % \end{aligned}
% % \end{equation}
% % By the Lipschitz continuity of $\lambda(\cdot)_+$, $\partial_x^l \psi \in L_{\mathrm{ul}}^p(\overline{S})$ for $|l| = 3$. Here $L_{\mathrm{ul}}^p(\overline{S})$ denotes the uniformly local $L^p$-space in $\overline{S}$. Hence $\psi \in \BUC^{2+\alpha}(\overline{S})$ by Sobolev embedding. Since $\psi(x_1,0)=\psi(x_1,L) = 0$ and
% % \begin{align*}
% %     \frac{\psi(x_1,x_2)}{x_2} = \int_0^1 (\partial_2\psi)(x_1, x_2 s)\,ds,
% % \end{align*}
% % $\psi/x_2 \in \BUC^{1+\alpha}(\overline{S})$ follows. By \eqref{eqn:kinetic energy and stream fct} and Hardy's inequality,
% % \begin{align*}
% %     \Bigl\| \frac{\psi}{x_2} \Bigr\|_2 \le 2 \|\nabla \psi\|_2,
% % \end{align*}
% % $\psi/x_2 \in \operatorname{BUC}(\overline{S}) \cap L^2(S)$ and \eqref{eqn:psi/x2 decay} follows.

% \end{proof}

The decay property \eqref{eqn:decay of stream function} is crucial for analyzing the structure equation \eqref{eqn:general structure of minimizer}. Since a minimizer $\omega$ must vanish at infinity, the observation implies the sign of $\gamma$ and then $W$.

\begin{cor}[$\gamma=0$ and $W>0$ for small $\mu$]\label{cor:gamma is zero and positive W} 
    Let $\mu,\lambda$ given by \eqref{ineq:negativity of I_mu, strip}, there exists a constant $M_1 > 0$ such that if $0 < \mu \le M_1$, then every minimizer $\omega \in S_{\mu,\lambda}$ satisfies
    \begin{align}
        \int_{\S}\omega \,dx < 1 \label{ineq:L1 norm of minimizer is strictly smaller that 1}.
    \end{align}
    In particular, $\gamma = 0$ by Remark \ref{remark:measure and gamma for minimizer} and then $W>0$.
\end{cor}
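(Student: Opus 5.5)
We argue by contradiction and follow the mechanism behind Remark \ref{remark:measure and gamma for minimizer}. Suppose there are $\mu_k\to 0^+$, a fixed $\lambda$ in the range \eqref{ineq:negativity of I_mu, strip}, and minimizers $\omega_k\in S_{\mu_k,\lambda}$ with $\int_\S\omega_k\,dx=1$. Proposition \ref{prop:general structure} gives $\omega_k=\lambda(\psi_k-W_kx_2-\gamma_k)_+$. The first step is to fix the signs of the multipliers when the mass constraint is saturated. Perturbations $\omega_k-\eps\eta$ with $\eta\ge 0$ supported in $\spt\omega_k$ and $\int x_2\eta=0$ stay admissible. Minimality then forces $\int(\psi_k-W_kx_2-\omega_k/\lambda)\eta\ge 0$ for all such $\eta$, that is, $\gamma_k\ge 0$. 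This is the standard Kuhn--Tucker sign for an inequality constraint.

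Next we collect quantitative bounds as $\mu_k\to0$. From \eqref{ineq:uniformly bounded} with $\|\omega_k\|_1=1$ we get $\|\omega_k\|_2\le C\mu_k^{1/3}\lambda^{2/3}$. Then \eqref{ineq:estimate of stream fct} together with $x_2<L$ gives
\begin{align*}
\|\psi_k\|_\infty\le CL^{1/2}\mu_k^{1/6}\lambda^{1/3}\longrightarrow 0 .
\end{align*}
Also \eqref{ineq:estimate of kinetic energy} gives $E[\omega_k]\le C\mu_k^{2/3}\lambda^{1/3}$, while \eqref{ineq:lower bound of -I_mu, lmabda} gives $E_{2,\lambda}[\omega_k]\ge \mu_k^2/(5\pi^5L^2)>0$. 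Testing the structure equation against $\omega_k$ produces the identity
\begin{align*}
\frac1\lambda\|\omega_k\|_2^2=2E[\omega_k]-W_k\mu_k-\gamma_k .
\end{align*}
Since $\frac1\lambda\|\omega_k\|_2^2<2E[\omega_k]$ by positivity of $E_{2,\lambda}$, this yields $W_k\mu_k+\gamma_k>0$. Integrating $-\Delta\psi_k=\omega_k$ against $1$ and against $x_2$ across the strip, and using the decay \eqref{eqn:decay of stream function} and Proposition \ref{prop:psi/x2 decay} to discard the lateral contributions, gives two flux identities:
\begin{align*}
\mu_k=-L\int_\R\p_2\psi_k(x_1,L)\,dx_1,\qquad 1=\int_\R\p_2\psi_k(x_1,0)\,dx_1-\int_\R\p_2\psi_k(x_1,L)\,dx_1 .
\end{align*}
Hence unit mass forces a bottom flux of order $1-\mu_k/L$.

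The contradiction should come from comparing this flux with the pointwise bound $\omega_k\le\lambda(\psi_k-W_kx_2)_+$, which uses $\gamma_k\ge0$. If $W_k\ge0$, then $\omega_k\le\lambda\psi_k$. Hence $\|\omega_k\|_\infty\to0$, and the unit mass has to spread over a set of measure at least $c\mu_k^{-1/6}$. On such a spread configuration, the Green's function comparison with $G_H$ together with the exponential decay of the strip kernel \eqref{def:green's function for strip} in $|x_1-y_1|$ should give
\begin{align*}
E[\omega_k]\lesssim\|\omega_k\|_\infty\,\mu_k ,
\end{align*}
since the kernel has effective range $L$ horizontally. This would give $2E[\omega_k]\le\frac{1}{\lambda}\|\omega_k\|_2^2$ for small $\mu_k$, contradicting $E_{2,\lambda}[\omega_k]>0$. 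If instead $W_k<0$, the positivity set $\{\psi_k>W_kx_2+\gamma_k\}$ must still stay bounded away from $|x_1|=\infty$, because $\psi_k\to0$ there; this should force $\gamma_k<0$ unless the support reaches the top boundary, and the case is excluded by the same energy comparison. Once \eqref{ineq:L1 norm of minimizer is strictly smaller that 1} is established, $\gamma=0$ follows from Remark \ref{remark:measure and gamma for minimizer}. Then $\frac1\lambda\|\omega\|_2^2<2E[\omega]$ and the identity above give $W\mu>0$, so $W>0$.

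The main obstacle is the energy comparison in the third step. It requires a sharp upper bound for $E$ on flat, spread-out vorticity in the strip, better than the scale-free bound \eqref{ineq:estimate of kinetic energy}, and must exploit the finite width $L$ through the kernel's decay. I would first try to prove $\int_\S G(x,y)\,dy\le C\min(x_2,L-x_2)$, which holds because this integral solves $-\Delta u=1$ with zero boundary data. This gives $E[\omega]\le C\|\omega\|_\infty\mu$ directly, and hence the required contradiction once $\|\omega_k\|_\infty$ is small compared with $\|\omega_k\|_2^2/\mu_k$.
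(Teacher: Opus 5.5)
\textbf{There is a genuine gap at the decisive step, ruling out $\int_\S\omega_k\,dx=1$: the argument does not close.}

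Your bound $E[\omega_k]\le\frac{L}{4}\|\omega_k\|_\infty\mu_k$ is correct, since $\int_\S G(x,y)\,dy=x_2(L-x_2)/2$. But turning it into $2E[\omega_k]\le\lambda^{-1}\|\omega_k\|_2^2$ requires the lower bound $\|\omega_k\|_2^2\ge\frac{\lambda L}{2}\mu_k\|\omega_k\|_\infty$. That is a non-degeneracy condition on the profile, and nothing in your argument supplies it. The relations you actually have point the other way:
\begin{itemize}
\item $\|\omega_k\|_2^2\le\|\omega_k\|_\infty\|\omega_k\|_1$;
\item $\omega_k\le\lambda\psi_k$ only yields $\|\omega_k\|_2^2\le\lambda\int\psi_k\omega_k=2\lambda E[\omega_k]$, which is exactly the positivity you want to contradict.
\end{itemize}
The smallness of $\|\omega_k\|_\infty$ does not help, because $\|\omega_k\|_2^2\le\|\omega_k\|_\infty$ tends to zero at least as fast.

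The missing idea is to estimate the \emph{mass}, not the energy. Integrate $\omega\le\lambda\psi$ and use Tonelli together with your own kernel identity (and the symmetry of $G$):
\begin{align*}
\int_\S\omega\,dx\le\lambda\int_\S\psi\,dx=\lambda\int_\S\omega(y)\,\frac{y_2(L-y_2)}{2}\,dy\le\frac{\lambda L}{2}\,\mu .
\end{align*}
So $\int_\S\omega<1$ whenever $\mu<2/(\lambda L)$, directly and without any contradiction sequence. The paper does essentially this on a thin layer. It bounds $\int_{\{x_2<2\mu\}}\omega\le\lambda\int_{\{x_2<2\mu\}}\psi^H\,dx\lesssim\lambda\mu^2$ using $\psi\le\psi^H$, as in Remark 2.6(iii) of \cite{abe2022stability}. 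It then controls the rest by Chebyshev: $\int_{\{x_2\ge 2\mu\}}\omega\le\frac12$.

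\textbf{The inputs to that step also need repair.}
\begin{itemize}
\item Your Kuhn--Tucker derivation of $\gamma_k\ge 0$ is vacuous. Since $x_2>0$ in $\S$, any $\eta\ge0$ with $\int x_2\eta=0$ vanishes identically. A correct variational version needs signed perturbations that lower the mass at fixed impulse.
\item The paper gets the signs from lateral decay instead. Because $\omega\in L^1$ while $\psi$ is uniformly continuous and tends to $0$ as $|x_1|\to\infty$, one must have $-\gamma\le 0$ (looking near $x_2=0$) and $-WL-\gamma\le0$ (looking near $x_2=L$).
\item The second inequality is exactly what disposes of your case $W_k<0$. It gives $-Wx_2-\gamma\le 0$ on $(0,L)$, hence $\omega\le\lambda\psi$ for either sign of $W$. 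Your paragraph on $W_k<0$ gets this backwards: the decay forces $\gamma_k\ge|W_k|L>0$, not $\gamma_k<0$.
\end{itemize}

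\textbf{What works.} Your final step, $W\mu=2E[\omega]-\lambda^{-1}\|\omega\|_2^2=-2I_{\mu,\lambda}>0$ once $\gamma=0$, is correct. It is shorter than the paper's Fourier-transform argument for excluding $W=0$. The flux identities are never used and can be dropped.
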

\begin{proof} 
    Recall the structure of $\omega$, \eqref{eqn:general structure of minimizer}.
    % , we shall first analyze the sign of $W$ and $\gamma$. 
    Since $\int_{\S}\omega<\infty$, then $(\psi-Wx_2-\gamma)_+=\frac{1}{\lambda}\omega\rightarrow 0$ as $|x_1|\rightarrow\infty$. Consider a sequence $x_n=(x_{n,1},x_{n,2})$ such that $x_{n,1}\rightarrow \infty$ and $x_{n,2} \rightarrow 0$, then combine with the decay of $\psi$, \eqref{eqn:decay of stream function},
    \begin{align*}
        \limsup_n ( \psi(x_n)-Wx_{2,n}-\gamma )\leq 0,
    \end{align*}
    this implies $\gamma\geq 0$ for all $\mu$. As for $W$, by taking an another sequence $x_n=(x_{n,1},x_{n,2})$ such that $x_{n,1}\rightarrow \infty$ and $x_{n,2} \rightarrow L$, then $-WL-\gamma \leq 0$, so that $W\geq -\gamma/L$.

    To show $\gamma=0$, we noticed that: if $W<0$, then $-W x_2-\gamma \leq -WL-\gamma\leq 0$ implies $\omega=\lambda(\psi-Wx_2-\gamma)_+\leq \lambda \psi$ for any $x_2\in (0,L)$.
    Such inequality also holds when $W\geq 0$. Thus, we can use \eqref{ineq:comparsion between stream function} to obtain
    \begin{align*}
        \int_{0<x_2<2\mu}\omega dx\leq & \lambda \int_{0<x_2<2\mu} \psi dx\leq \lambda \int_{0<x_2<2\mu} \psi^H dx,\\
        \leq & \lambda \int_{0<x_2<2\mu} \int_{\R^2_+} G_H(x,y)\omega(y) dy dx,
    \end{align*}
    and all remaining steps are followed by Remark 2.6 (iii) in \cite{abe2022stability}

    We shall finish the proof by deriving a contradiction. Suppose not, i.e. $W=0$, then $\psi \in L^2(S)$ is a solution to the system:
    \begin{align*}
        \left\{ 
        \begin{aligned}
            -\Delta\psi=\omega=\lambda(\psi)_+&=\lambda \psi && \tt{ in } \S,\\
        \psi&=0 && \tt{ on } \p \S,\\
        \psi &\rightarrow 0 && \tt{ as } |x_1|\rightarrow \infty.
        \end{aligned}
        \right.
    \end{align*}
    Note that $\psi \in \BUC^{2+\alpha}(\overline{\S})$, then it is possible to do a Fourier transform of $\psi$ in $x_1$-variable, we define
    \[
        \widehat{\psi}(\xi,x_2) := \frac{1}{\sqrt{2\pi}} \int_{\mathbb{R}} e^{-i\xi x_1}\,\psi(x_1,x_2)\,dx,
        \qquad \xi\in\R,\; x_2\in(0,L).
    \]
    By Plancherel's theorem, $\widehat\psi\in L^2(\S)$ and
    $\|\psi\|_{L^2(\S)} = \|\widehat\psi\|_{L^2(\S)}$.

    Applying the Fourier transform (in $x_1$) to the equation
    $-\Delta\psi = \lambda\psi$ gives
    \begin{align}\label{eq:transverse}
        \left\{\begin{aligned}
            -\p_{x_2}^2\widehat\psi &= (\lambda-\xi^2)\,\widehat\psi,\\
            \widehat\psi(\xi,0) &= \widehat\psi(\xi,L) = 0\qquad \text{for any }\xi\in\mathbb{R}.
        \end{aligned}\right.
    \end{align}
    For each fixed $\xi$, \eqref{eq:transverse} is a problem for the unknown function $y\mapsto\widehat\psi(\xi,y)$. By solving the elementary single variable boundary valued ODE, $\widehat\psi(\xi,\cdot)$ has a non‑zero solution exists if and only if
    \[
        \lambda - \xi^2 = \nu_m,\qquad 
        \nu_m := \left(\frac{m\pi}{L}\right)^2,\; m=1,2,3,\dots
    \]
    More precisely:
    \begin{itemize}
        \item If $\lambda-\xi^2 \neq \nu_m$ for all $m$, then $\widehat\psi(\xi,\cdot)\equiv 0$ on $[0,L]$.
        \item If $\lambda-\xi^2 = \nu_m$ for some $m$, then $\widehat\psi(\xi,y) = c(\xi)\,\sin\!\bigl(\frac{m\pi y}{L}\bigr)$for some coefficient $c(\xi)$.
    \end{itemize}
    Consequently, the set of $\xi$ for which $\widehat\psi(\xi,\cdot)$ can be non‑zero is contained in
    \[
        E := \bigcup_{m\in\mathbb{N}} \Bigl\{\xi\in\mathbb{R} : \xi^2 = \lambda - \Bigl(\frac{m\pi}{L}\Bigr)^2\Bigr\}.
    \]
    For a fixed $\lambda$, only finitely many $m$ satisfy $\lambda \ge (m\pi/L)^2$; for each such $m$ there are at most two values of $\xi$, namely $\pm\sqrt{\lambda-(m\pi/L)^2}$.  Thus, $E$ is a finite set with Lebesgue measure zero, so that $\|\psi\|_{L^2(\S)} = \|\widehat\psi\|_{L^2(\S)}=0$ and then $\psi\equiv 0$.

    By the structure analysis \eqref{eqn:general structure of minimizer}, $\omega=0$, hence $0>-I_{\mu,\lambda}=E_2[\omega]=0$. This yields a contradiction.

    % $u=0$ is a solution to the system, and by uniqueness, $u=0$ is the unique solution to the system, so that $\psi=0$. By the structure analysis \eqref{eqn:general structure of minimizer}, $\omega=0$, hence $0>-I_{\mu,\lambda}=E_2[\omega]=0$. Contradiction.

\end{proof}

Then combine \eqref{eqn:decay of stream function} with $\gamma=0$ and $W>0$, we have 

\begin{prop}\label{prop:compact support}
Under the assumption in the Proposition \ref{prop:psi/x2 decay}, $\spt\omega$ is compact in $\overline{\mathbb{R}_+^2}$.
\end{prop}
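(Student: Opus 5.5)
The plan is to combine the structure formula \eqref{eqn:general structure of minimizer} with $\gamma=0$ and $W>0$ from Corollary \ref{cor:gamma is zero and positive W}, which gives
\begin{align*}
    \omega=\lambda\big(\psi-Wx_2\big)_+=\lambda x_2\Big(\frac{\psi}{x_2}-W\Big)_+ ,
\end{align*}
and then to use the decay \eqref{eqn:psi/x2 decay} from Proposition \ref{prop:psi/x2 decay}. Since $\S=\R\times(0,L)$ is already bounded in the vertical direction, compactness of $\spt\omega$ in $\overline{\R^2_+}$ reduces to showing that $\spt\omega\subseteq[-R,R]\times[0,L]$ for some $R>0$. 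A closed set contained in such a rectangle is compact.

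First, I would note that the hypotheses of Proposition \ref{prop:psi/x2 decay} are in force, so that $\psi/x_2\in\BUC^{1+\alpha}(\S)$ and $\psi(x)/x_2\to0$ as $|x_1|\to\infty$. I read this convergence as uniform in $x_2\in(0,L)$. The justification, following Proposition 2.7 of \cite{abe2022stability}, is that $\psi/x_2$ is uniformly continuous and lies in $L^2(\S)$ by Hardy's inequality together with \eqref{eqn:kinetic energy and stream fct}. A uniformly continuous $L^2$ function on a strip tends to zero uniformly as $|x_1|\to\infty$: if $|f|\ge\delta$ at points $x^{(n)}$ with $|x^{(n)}_1|\to\infty$, then $|f|\ge\delta/2$ on disjoint balls of a fixed radius intersected with $\S$. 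These sets have uniformly positive measure (the strip has fixed width), which contradicts $f\in L^2$.

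Next, taking $\delta=W>0$, I obtain $R$ such that $\psi(x)/x_2<W$ for all $|x_1|>R$ and $x_2\in(0,L)$. Then $\psi-Wx_2<0$ there, so $\omega=0$ on $\{|x_1|>R\}\cap\S$. Since $\omega$ is continuous (indeed $\psi\in\BUC^{2+\alpha}$), its support is a closed subset of $[-R,R]\times[0,L]$ and hence compact.

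The only delicate point is the uniformity of \eqref{eqn:psi/x2 decay} in $x_2$, especially near $x_2=0$, where $\psi/x_2$ could fail to be small even though $\psi$ itself is. Relying on the $\BUC$ regularity of $\psi/x_2$ up to the boundary handles this, since the boundary value $\partial_2\psi(x_1,0)$ is included in the uniform continuity. Near $x_2=L$ there is no issue, because $x_2$ is bounded away from $0$ there and $\psi\to0$ uniformly. Otherwise the argument is routine.
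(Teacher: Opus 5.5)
Your proposal is correct and follows the paper's argument: on $\spt\omega$ one has $Wx_2\le\psi$, and with $W>0$ the decay \eqref{eqn:psi/x2 decay} confines the support to a bounded range in $x_1$, while the strip is already bounded in $x_2$. The paper leaves the uniformity of \eqref{eqn:psi/x2 decay} in $x_2$ implicit; you justify it explicitly via $\psi/x_2\in\BUC\cap L^2(\S)$ (Hardy's inequality), which is the reasoning behind Proposition 2.7 of \cite{abe2022stability} that the paper cites.
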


\begin{proof}
Since $\spt\omega = \overline{\{x \in S \mid \psi(x) - W x_2 - \gamma > 0\}}$ for $W > 0$ and $\gamma \ge 0$,
\begin{align*}
    W x_2 \le \psi(x), \qquad x \in \operatorname{spt}\omega.
\end{align*}
Because $\psi/x_2 \to 0$ as $|x_1| \to \infty$ by \eqref{eqn:psi/x2 decay}, the assertion follows.

\end{proof}

\begin{prop}[Steiner symmetrization]\label{prop:steiner symmetrization}
For $\omega \ge 0$ satisfying $\omega \in L^2 \cap L^1(\S)$ and $x_2\omega \in L^1(\S)$, there exists $\omega^* \ge 0$ such that
\begin{align}
    \begin{aligned}
        &\omega^*(x_1,x_2) = \omega^*(-x_1,x_2), \\
        &\omega^*(x_1,x_2) \text{ is non-increasing for } x_1 > 0.
    \end{aligned}
\end{align}
Moreover,
\begin{align}
    \begin{aligned}
        \|\omega^*\|_q &= \|\omega\|_q, \quad 1\le q \le 2, \\
        \|x_2\omega^*\|_1 &= \|x_2\omega\|_1, \\
        E[\omega^*] &\ge E[\omega].
    \end{aligned}
\end{align}
\end{prop}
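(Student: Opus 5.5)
Take $\omega^*$ to be the Steiner symmetrization of $\omega$ in the $x_1$-variable. For a.e.\ fixed $x_2\in(0,L)$, let $\omega^*(\cdot,x_2)$ be the symmetric decreasing rearrangement on $\R$ of the one-variable function $x_1\mapsto\omega(x_1,x_2)$. This is well defined because $\omega(\cdot,x_2)\in L^1(\R)$ for a.e.\ $x_2$ by Fubini. Since $\omega\ge0$, $\omega^*\ge 0$, and by construction $\omega^*$ is even in $x_1$ and non-increasing for $x_1>0$. Measurability of $\omega^*$ on $\S$ is standard: use the layer-cake representation $\omega^*(x)=\int_0^\infty 1_{\{|x_1|<\frac12|\{\omega(\cdot,x_2)>t\}|\}}\,dt$, and the map $(x_2,t)\mapsto |\{\omega(\cdot,x_2)>t\}|$ is measurable.

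The norm identities follow slice by slice. The one-dimensional rearrangement is equimeasurable with the slice, so $\int_\R \omega^*(x_1,x_2)^q\,dx_1=\int_\R\omega(x_1,x_2)^q\,dx_1$ for every $q\ge1$. Integrating in $x_2$ gives $\|\omega^*\|_q=\|\omega\|_q$ for $1\le q\le 2$. Multiplying the $q=1$ slice identity by $x_2$ before integrating in $x_2$ gives $\|x_2\omega^*\|_1=\|x_2\omega\|_1$. Because the weight depends only on $x_2$, the first moment is preserved exactly, and so are the hypotheses needed for $E$ to be finite by Proposition \ref{prop:estimate for kinetic energy}.

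For the energy inequality, write the Green's function \eqref{def:green's function for strip} as $G(x,y)=g_{x_2,y_2}(x_1-y_1)$ with
\begin{align*}
g_{a,b}(s)=\frac{1}{4\pi}\ln\Big(1+\frac{\sin(\frac{\pi}{L}a)\sin(\frac{\pi}{L}b)}{\sinh(\frac{\pi}{2L}s)^2+\sin(\frac{\pi}{2L}(a-b))^2}\Big).
\end{align*}
For fixed $a,b\in(0,L)$, $g_{a,b}$ is nonnegative, even in $s$, and non-increasing in $|s|$, because $\sinh^2$ increases in $|s|$ and $t\mapsto\ln(1+c/t)$ decreases for $c\ge0$. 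Hence $g_{a,b}$ equals its own symmetric decreasing rearrangement. For fixed $x_2,y_2$, the one-dimensional Riesz rearrangement inequality gives
\begin{align*}
\int_\R\int_\R \omega(x_1,x_2)\,g_{x_2,y_2}(x_1-y_1)\,\omega(y_1,y_2)\,dx_1dy_1\le \int_\R\int_\R \omega^*(x_1,x_2)\,g_{x_2,y_2}(x_1-y_1)\,\omega^*(y_1,y_2)\,dx_1dy_1 .
\end{align*}
Integrating over $(x_2,y_2)\in(0,L)^2$ by Tonelli, which is legitimate since all integrands are nonnegative, yields $E[\omega^*]\ge E[\omega]$.

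The main obstacle is applying Riesz when $g_{a,b}$ has a logarithmic singularity at $s=0$ on the diagonal $a=b$, where $g_{a,a}\notin L^\infty$. I would handle this by truncation. Apply Riesz to $\min(g_{a,b},k)$, which is still symmetric decreasing and bounded. For $a\neq b$ note that $g_{a,b}$ is integrable on $\R$ because it decays like $e^{-\pi|s|/L}$; in any case each slice of $\omega$ is in $L^1\cap L^2$, so each side is finite or handled by monotone convergence. Then let $k\to\infty$ by monotone convergence on both sides. The finiteness of $E[\omega]$ and $E[\omega^*]$, from Proposition \ref{prop:estimate for kinetic energy} together with the preserved norms, ensures the limits are genuine numbers. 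The case $a=b$ is a null set in $(x_2,y_2)$ and does not matter anyway.
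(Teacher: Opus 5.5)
Your proposal is correct and takes essentially the paper's route: the paper also Steiner-symmetrizes in $x_1$ and gets $E[\omega^*]\ge E[\omega]$ from the Riesz rearrangement inequality, using that $G(x,y)$ is symmetric decreasing in $|x_1-y_1|$ for fixed $x_2,y_2$. The only difference is that the paper cites Fraenkel and Turkington for this, while you carry out the details yourself: the slice-wise one-dimensional Riesz inequality, Tonelli in $(x_2,y_2)$, measurability of $\omega^*$, and truncation of the logarithmic singularity.
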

\begin{proof}
    Recall the structure of the Green's function for $\S$, \eqref{def:green's function for strip}. For any fixed $x_2,y_2$, $G$ is a strictly decreasing function with respect to $|x_1-y_1|$. Then the Riesz Rearrangement inequality can be applied, which directly implies this theorem. See \cite{Fraenkel1974}, Appendix I, and \cite{TurkingtonBruce1983Osvf}, p.1053.

\end{proof}

\begin{prop}\label{prop:far-field estimate}
Let $A,R \ge 1, Q = \{x \in \S \mid |x_1| < AR,\; x_2 < R\}$. Let $\psi$ be the stream function \eqref{def:stream function} for $\omega \in L^2\cap L^1(\S)$ satisfying $x_2\omega \in L^1(\S)$ and $\omega \ge 0$. Assume that \eqref{def:symmetry property} holds for $\omega$. Then,
\begin{align}
    & \psi(x)\leq C x_2^{1/2}\|\omega\|_1^{1/2}\|\omega\|_2^{1/2} + \|\omega\|_1 +x_2\Big(\frac{A}{x_1}\Big)^2\|x_2 \omega\|_1 , x_2 \leq \frac{|x_1|}{A},\label{enq:upperbound estimate}\\
    & \int_{\S\setminus Q} \psi(x)\omega(x)\,dx \leq \frac{C}{\min\{A,R\}^{1/2}} \bigl( \|\omega\|_{L^1\cap L^2}^2 + \|x_2\omega\|_{L^1}^2 \bigr).\label{enq:far-field estimate}
\end{align}
The constant $C$ is independent of $\omega$ and $A, R \ge 1$.
\end{prop}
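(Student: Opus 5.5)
Both estimates will be reduced to the half-plane case. The comparison \eqref{ineq:comparsion between stream function} gives $0<\psi\le\psi^H$ in $\S$, where $\psi^H$ is the half-plane stream function generated by $\omega$ extended by zero. The symmetry \eqref{def:symmetry property} is a statement about $\omega$ alone, so it is unchanged by this extension. Hence the far-field bounds of Abe--Choi (their Lemma 2.8, proved for symmetric, $x_1$-nonincreasing vorticities in $\R^2_+$) apply to $\psi^H$ and transfer to $\psi$ directly.

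For \eqref{enq:upperbound estimate}, fix $x$ with $x_2\le |x_1|/A$, say $x_1>0$. Split $\int G_H(x,y)\omega(y)\,dy$ into three regions.
\begin{itemize}
\item \emph{Near region} $|x-y|<x_2$ (or $|y_1-x_1|<x_1/2$ together with $y_2$ comparable). Use the $L^q$ bound on $G_H(x,\cdot)$ from Proposition \ref{prop:estimate for kinetic energy}, which gives the term $Cx_2^{1/2}\|\omega\|_1^{1/2}\|\omega\|_2^{1/2}$.
\item \emph{Intermediate region} $x_2\le|x-y|$ with $|x-y|$ bounded. Use $G_H\le \frac{1}{4\pi}\ln(1+4x_2y_2/|x-y|^2)\le C$, contributing at most $\|\omega\|_1$.
\item \emph{Far region} $|x_1-y_1|\ge x_1/2$. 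Use $\ln(1+t)\le t$ to get $G_H(x,y)\le \frac{x_2y_2}{\pi|x-y|^2}\le Cx_2y_2/x_1^2$.
\end{itemize}
The monotonicity of $\omega$ in $|y_1|$ is what lets us avoid needing smallness of $\omega$ near $x$. Its role is to control the portion of $y$ with $y_1$ close to $x_1$ by averaging: $\omega(y_1,y_2)\le \frac{1}{y_1}\int_0^{y_1}\omega(s,y_2)\,ds$. This gives the factor $A^2$ when we use $x_1\ge Ax_2$. Collecting terms yields the right side of \eqref{enq:upperbound estimate}, with constants absorbed.

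For \eqref{enq:far-field estimate}, split $\S\setminus Q$ into the two pieces $\{x_2\ge R\}\cap\S$ and $\{|x_1|\ge AR,\ x_2<R\}$. The first is empty if $R\ge L$. In general, on it we use \eqref{ineq:estimate of stream fct}:
\[
\int_{x_2\ge R}\psi\omega \le C\|\omega\|_1^{1/2}\|\omega\|_2^{1/2}\int_{x_2\ge R} x_2^{1/2}\omega\le C R^{-1/2}\|\omega\|_1^{1/2}\|\omega\|_2^{1/2}\|x_2\omega\|_1 .
\]
On the second piece we have $x_2<R\le |x_1|/A$, so \eqref{enq:upperbound estimate} applies. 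Multiplying by $\omega$ and integrating gives three terms.
\begin{itemize}
\item The term $x_2^{1/2}$ is handled by Hölder with the tail mass of $\omega$.
\item The term $x_2A^2x_1^{-2}\|x_2\omega\|_1$ is bounded by $A^2(AR)^{-2}\|x_2\omega\|_1^2\le R^{-2}\|x_2\omega\|_1^2$.
\item The term $\|\omega\|_1\int_{|x_1|\ge AR}\omega$ needs the tail mass to be small. Here monotonicity is used again: $\omega(x_1,x_2)\le \|\omega(\cdot,x_2)\|_{L^1}/(2|x_1|)$. Interpolating with $L^2$ gives $\int_{|x_1|\ge AR}\omega\le C(AR)^{-1/2}\|\omega\|_{L^1\cap L^2}$. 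I would prove this by Hölder on the horizontal slices and then integrate in $x_2$.
\end{itemize}
Finally, all products are bounded by $\|\omega\|_{L^1\cap L^2}^2+\|x_2\omega\|_1^2$ via Young's inequality.

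The main obstacle is \eqref{enq:upperbound estimate}, specifically the intermediate region, where neither the logarithmic singularity bound nor the far-field bound is sharp. My first attempt there is to follow Abe--Choi's proof verbatim on $\psi^H$. If the stated form (with the bare $\|\omega\|_1$ term) does not come out, I would fall back on the cruder bound $G_H\le C(1+\ln^+(x_2/|x-y|))$ near $x$ and absorb the log part into the $L^2$ term.
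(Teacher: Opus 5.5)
Your opening reduction is exactly the paper's proof. Extending $\omega$ by zero keeps \eqref{def:symmetry property}, $0<\psi\le\psi^H$ on $\S$, and since $\omega\ge 0$ you get $\int_{\S\setminus Q}\psi\omega\le\int_{\S\setminus Q}\psi^H\omega$, which is the half-plane far-field integral. The paper then simply invokes Propositions 3.3--3.4 of \cite{abe2022stability}.

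The self-contained derivation you add, however, has a genuine gap in the far-field step.

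First, \eqref{enq:upperbound estimate} as stated follows at once from \eqref{ineq:estimate of stream fct}, because the two extra terms are nonnegative. So your three-region analysis, and the ``main obstacle'' you identify there, are unnecessary. For the same reason this bound has no decay in $|x_1|$, so integrating it against $\omega$ over $\{|x_1|\ge AR\}$ cannot produce \eqref{enq:far-field estimate}.

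Second, your tail bound $\int_{|x_1|\ge AR}\omega\le C(AR)^{-1/2}\|\omega\|_{L^1\cap L^2}$ is false, and no tail bound can save this route. Take $\omega_M=1_{\{|x_1|<M,\ L/4<x_2<L/4+1/M\}}$, which is even and nonincreasing in $|x_1|$, with $A=R=(M/2)^{1/2}$. Then $\|\omega_M\|_1=2$, $\|\omega_M\|_2=\sqrt2$, $\|x_2\omega_M\|_1\to L/2$, and $\int_{|x_1|\ge AR}\omega_M=1$. So your term $\|\omega_M\|_1\int_{|x_1|\ge AR}\omega_M=2$, and likewise the $x_2^{1/2}$-term, stays bounded below. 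Meanwhile the right side of \eqref{enq:far-field estimate} is $O(M^{-1/4})$. In this example $\int\psi\omega$ is in fact small because $\psi$ itself is small, and a bare $\|\omega\|_1$ term cannot detect that.

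The missing idea is to use the monotonicity inside the Biot--Savart integral, not on the tail of $\omega$. For $x_1>0$, split at $y_1=x_1/2$.
\begin{itemize}
\item Where $y_1\le x_1/2$, you have $|x-y|\ge x_1/2$, hence $G_H(x,y)\le x_2y_2/(\pi|x-y|^2)\le 4x_2y_2/(\pi x_1^2)$.
\item Where $y_1>x_1/2$, your own averaging bound gives $\omega(y)\le m(y_2)/x_1$, with $m(y_2)=\int_{\R}\omega(s,y_2)\,ds$. The exact identity $\int_{\R}G_H(x,y)\,dy_1=\min\{x_2,y_2\}$ then controls this part by $\|x_2\omega\|_1/x_1$.
\end{itemize}
Together these give
\[
\psi(x)\le\psi^H(x)\le\Big(\frac{1}{|x_1|}+\frac{4x_2}{\pi x_1^2}\Big)\|x_2\omega\|_1 .
\]
Integrating over $\{|x_1|\ge AR\}$ yields $\frac{1}{AR}\|\omega\|_1\|x_2\omega\|_1+\frac{4}{\pi(AR)^2}\|x_2\omega\|_1^2$. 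Combine this with your correct estimate on $\{x_2\ge R\}$ and with $AR\ge\min\{A,R\}^{1/2}$, and \eqref{enq:far-field estimate} follows.
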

\begin{proof}
    Combine the observation $0\leq \psi\leq \psi^H$ on $\S$ and the Proposition 3.3, 3.4 in \cite{abe2022stability}, then the result conclude.
    
\end{proof}

\begin{prop}[Convergence of the kinetic energy]
\label{prop:strip energy_convergence}
Assume that the sequence \(\{\omega_n\} \subset L^1  \cap L^2(\S)\) satisfies the following:
\begin{itemize}
    \item \(\sup_n \|\omega_n\|_1 + \sup_n \|\omega_n\|_2 \leq M\) for some constants \(M\), and
    \item \(\omega_n \rightharpoonup \omega\) in \(L^2(\S)\).
\end{itemize}
If each $\omega_n$ is nonnegative and satisfies \eqref{def:symmetry property}, then
\(E[\omega_n] \to E[\omega] \) as \(n \rightarrow \infty\).
\end{prop}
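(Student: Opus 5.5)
The plan is to follow the proof of Proposition \ref{prop:weak_finite_volume_energy_convergence}: split $\S$ into a bounded piece, where weak convergence is enough, and a far field whose contribution is uniformly small. In $\S$ the uniform smallness of the far field will come from the symmetry assumption \eqref{def:symmetry property}, through Proposition \ref{prop:far-field estimate}, rather than from decay of the domain. First I record the missing moment bound: since $x_2<L$ in $\S$, we have $\|x_2\omega_n\|_1\le L\|\omega_n\|_1\le LM$, so every quantity on the right of \eqref{enq:far-field estimate} is bounded by $C(1+L^2)M^2$, uniformly in $n$.

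Next I check that the weak limit $\omega$ is admissible for the same estimates. Nonnegativity and the even symmetry in $x_1$ pass to weak $L^2$ limits by testing against nonnegative test functions and against reflected test functions. Monotonicity for $x_1>0$ also passes to the limit: for $\varphi\ge0$ and $h>0$, the inequality $\int(\omega_n(x_1+h,x_2)-\omega_n(x_1,x_2))\varphi\,dx\le 0$ (for $\varphi$ supported in $\{x_1>0\}$) is preserved under weak convergence. The norm bounds $\|\omega\|_2\le M$ and $\|\omega\|_1\le M$ follow from weak lower semicontinuity, using $L^1$ on bounded sets together with monotone convergence, exactly as in Step 1 of Theorem \ref{thm:exists of minimizer}. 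Hence $\omega$ satisfies \eqref{def:symmetry property} and the same bounds as the $\omega_n$.

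For the splitting, fix $R\ge\max\{L,1\}$ and $A\ge1$, and set $Q=\{x\in\S: |x_1|<AR\}$; the condition $x_2<R$ is automatic in $\S$. For any admissible $\eta\in\{\omega_n,\omega\}$, write $\eta=\eta 1_Q+\eta 1_{\S\setminus Q}$ and expand $E[\eta]$. The cross term and the pure far-field term are both of the form $\int_{\S\setminus Q}\psi^{\eta 1_Q}\eta$ or $\int_{\S\setminus Q}\psi^{\eta 1_{\S\setminus Q}}\eta$. Since $G\ge0$ and $\eta\ge0$, each is at most $\int_{\S\setminus Q}\psi\,\eta\,dx$, where $\psi$ is the stream function of the whole $\eta$. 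By \eqref{enq:far-field estimate},
\[
|E[\eta]-E[\eta1_Q]|\le \frac{C(1+L^2)M^2}{\min\{A,R\}^{1/2}},
\]
uniformly for $\eta\in\{\omega_n\}\cup\{\omega\}$. Given $\epsilon>0$, I will choose $A=R$ large enough that this bound is below $\epsilon$.

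On the bounded set $Q$, I would use $0\le G\le G_H$ and the logarithmic singularity of $G_H$ to get $G\in L^2(Q\times Q)$. I would also show $\omega_n(x)\omega_n(y)\rightharpoonup\omega(x)\omega(y)$ weakly in $L^2(Q\times Q)$. This follows because the products are bounded by $M^2$ in $L^2(Q\times Q)$ and converge when tested against tensor products $\varphi(x)\chi(y)$, whose span is dense. Therefore $E[\omega_n1_Q]\to E[\omega1_Q]$, so $\limsup_n|E[\omega_n]-E[\omega]|\le2\epsilon$, and letting $\epsilon\to0$ gives the claim. I expect the main obstacle to be the second step: checking that the weak limit inherits \eqref{def:symmetry property} and the $L^1$ bound, so that Proposition \ref{prop:far-field estimate} applies to $\omega$ with the same constant.
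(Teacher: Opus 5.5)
Your proposal is correct and follows essentially the same route as the paper. You split off $Q=\{|x_1|<AR,\ x_2<R\}$, control the far-field contributions uniformly in $n$ via \eqref{enq:far-field estimate} (this is where \eqref{def:symmetry property} enters), and pass to the limit on $Q\times Q$ using $G\in L^2(Q\times Q)$ and weak convergence of $\omega_n(x)\omega_n(y)$. Your check that the limit $\omega$ inherits the symmetry and bounds is more careful than the paper's ``estimating $E[\omega]$ in the same way'', but it is unnecessary: for the single function $\omega$, $E[\omega 1_Q]\uparrow E[\omega]$ as $A\to\infty$ by monotone convergence, so no symmetry of the limit is needed.
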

\begin{proof}
    We decompose the energy into two terms
    \begin{align*}
        2E[\omega_n] = \int_{\S} \psi_n(x)\omega_n(x)\,dx = \int_{Q} \psi_n(x)\omega_n(x)\,dx + \int_{\S\setminus Q} \psi_n(x)\omega_n(x)\,dx,
    \end{align*}
    and observe that
    \begin{align*}
        \int_{Q} \psi_n(x)&\omega_n(x)dx \\
        &= \int_{Q} \omega_n(x) \int_{Q} G(x,y)\omega_n(y)\,dydx + \int_{Q} \omega_n(x)\int_{\S\setminus Q} G(x,y)\omega_n(y)dydx,\\
        &= \int_{Q} \omega_n(x) \int_{Q} G(x,y)\omega_n(y)\,dydx + \int_{\S\setminus Q} \omega_n(y)\int_{Q} G(x,y)\omega_n(x)dxdy,\\
        &\leq \int_{Q} \omega_n(x) \int_{Q} G(x,y)\omega_n(y)\,dydx + \int_{\S \setminus Q} \psi_n(x)\omega_n(x)\,dx.
    \end{align*}
    Applying \eqref{enq:far-field estimate} yields
    \begin{align*}
        \Bigl|E[\omega_n] - \frac{1}{2}\int_{Q}\int_{Q} G(x,y)\omega_n(x)\omega_n(y)\,dx\,dy \Bigr| \leq & \int_{\S\setminus Q} \psi_n(x)\omega_n(x)\,dx, \\
        \leq & \frac{C}{\min\{A,R\}^{1/2}}.
    \end{align*}
    By estimating $E[\omega]$ in the same way,
    \begin{align*}
        2|E[\omega_n] - E[\omega]| \le \Bigl| \int_{Q}\int_{Q} G(x,y)(\omega(x)\omega(y) - \omega_n(x)\omega_n(y))\,dx\,dy \Bigr| + \frac{C}{\min\{A,R\}^{1/2}}.
    \end{align*}
    Since $G(x,y) \in L^2(Q \times Q)$ and $\omega_n(x)\omega_n(y) \rightharpoonup \omega(x)\omega(y)$ in $L^2(Q \times Q)$, sending $n\to\infty$ and $A,R\to\infty$ imply the desired result.

\end{proof}

Before stating the next result, we pause to note that the path to Lemma 3.2 in \cite{abe2022stability} has now been fully cleared in our strip setting. Specifically, Proposition \ref{prop:steiner symmetrization} provides the Steiner symmetrization with proper properties, Proposition \ref{prop:strip energy_convergence} guarantees that the weak $L^2$ convergence of the kinetic energy for symmetrized sequences, and Corollary \ref{cor:gamma is zero and positive W} ensures the strict positivity of the propagation speed $W$. With these three components in place, the variational construction in Lemma 3.2 \cite{abe2022stability} carries over verbatim.

\begin{prop}\label{prop:strip exists of minimizer}
For $0 < \nu < \infty, 0< \mu <M_1 \nu, \lambda > \frac{\pi^5}{2L^2}$, there exists a minimizer $\tilde\omega\in \tilde{K}_{\mu,\nu}$ such that
\begin{align}
    E_{2,\lambda}[\tilde\omega]=\inf_{\tilde \omega \in \tilde{K}_{\mu,\nu}} \left\{-E_{2,\lambda}[\tilde \omega]\right\},
\end{align}
where $\tilde{K}_{\mu,\nu} = \Bigl\{ \omega \in L^2 \cap L^1(S) \;\Big|\; \omega \ge 0,\; \int_{S} x_2\omega \,dx \le \mu,\; \int_{S} \omega \,dx \le \nu \Bigr\}.$ Moreover, $\int x_2\omega =\mu$ with a compact support. In particular, it is also a minimizer of \eqref{def:minimizing problem} in ${K}_{\mu,\nu}$.
\end{prop}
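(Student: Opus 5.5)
We follow the strategy of Lemma 3.2 in \cite{abe2022stability}, working with the relaxed class $\tilde K_{\mu,\nu}$, which is closed under weak limits. By the measure scaling we reduce to $\nu=1$. Set $\tilde I_{\mu,\lambda}=\inf_{\tilde K_{\mu,1}}\{-E_{2,\lambda}\}$.

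First we record the basic properties of $\tilde I_{\mu,\lambda}$. Since $K_\mu\subseteq\tilde K_{\mu,1}$, we have $\tilde I_{\mu,\lambda}\le I_{\mu,\lambda}\le -\mu^2/(5\pi^5L^2)<0$ by \eqref{ineq:lower bound of -I_mu, lmabda}. Finiteness follows from the same Young inequality argument as in \eqref{eqn:I_mu is finite}, because that bound used only $\|x_2\omega\|_1\le\mu$.

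Next we take a minimizing sequence $\{\omega_n\}\subset\tilde K_{\mu,1}$ and replace each $\omega_n$ by its Steiner symmetrization $\omega_n^*$ from Proposition \ref{prop:steiner symmetrization}. This preserves the $L^1$ norm, the $L^2$ norm and $\|x_2\omega\|_1$, and does not decrease $E$, so $\{\omega_n^*\}$ is again minimizing and satisfies \eqref{def:symmetry property}. The argument of Remark \ref{remark:uniformly bounded} also only uses $\|x_2\omega\|_1\le\mu$ and $I<0$, so $\{\omega_n^*\}$ is bounded in $L^1\cap L^2$. We extract $\omega_n^*\rightharpoonup\tilde\omega$ weakly in $L^2$. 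Lower semicontinuity on balls, as in Step 1 of Theorem \ref{thm:exists of minimizer}, gives $\tilde\omega\ge0$, $\int\tilde\omega\le1$ and $\int x_2\tilde\omega\le\mu$, so $\tilde\omega\in\tilde K_{\mu,1}$. This is exactly why we use the relaxed constraint: we only get an inequality for $\int x_2\tilde\omega$ because mass may escape to $|x_1|\to\infty$. Proposition \ref{prop:strip energy_convergence} gives $E[\omega_n^*]\to E[\tilde\omega]$. Weak lower semicontinuity of $\|\cdot\|_2^2$ then yields $-E_{2,\lambda}[\tilde\omega]\le\tilde I_{\mu,\lambda}$, so $\tilde\omega$ is a minimizer over $\tilde K_{\mu,1}$. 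It is nonzero because $\tilde I<0$.

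The main obstacle is proving $\int x_2\tilde\omega=\mu$. We argue by contradiction and suppose $\mu':=\int x_2\tilde\omega<\mu$. The constraint on the first moment is then inactive. Taking variations $\tilde\omega+\epsilon\eta$ with $\eta$ supported where they are admissible, the Lagrange multiplier argument of Proposition \ref{prop:general structure} gives $\tilde\omega=\lambda(\psi-Wx_2-\gamma)_+$ with $W=0$ and $\gamma\ge0$. The case $\gamma=0$ is ruled out by the Fourier argument of Corollary \ref{cor:gamma is zero and positive W}, which forces $\psi\equiv0$ and hence $\tilde\omega=0$, a contradiction. For $\gamma>0$, Remark \ref{remark:measure and gamma for minimizer} gives $\int\tilde\omega=1$. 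Then $\tilde\omega$ is a minimizer in $K_{\mu',1}$ with full mass, and since $\mu'<\mu\le M_1$, Corollary \ref{cor:gamma is zero and positive W} gives $\int\tilde\omega<1$, a contradiction. The delicate point is checking that the inputs of Corollary \ref{cor:gamma is zero and positive W} still hold at $\mu'$: the smallness $\mu'\le M_1$ holds, and the condition $\lambda>\pi^5/(2L^2)$ is unchanged. If instead that corollary requires $\tilde\omega$ to minimize over $K_{\mu'}$ itself, we would first show $\tilde I$ is non-increasing in $\mu$ to identify the two problems.

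Finally, since $\int x_2\tilde\omega=\mu$, we have $\tilde\omega\in K_{\mu,1}$. Because $K_{\mu,1}\subseteq\tilde K_{\mu,1}$, $\tilde\omega$ also minimizes \eqref{def:minimizing problem}, so $\tilde\omega\in S_{\mu,\lambda}$. Corollary \ref{cor:gamma is zero and positive W} gives $\gamma=0$ and $W>0$, and Proposition \ref{prop:compact support} then gives compact support.
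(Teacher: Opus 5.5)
Your proof is correct. Its first half is exactly the two-pillar argument the paper borrows from Lemma 3.2 of \cite{abe2022stability}: pass to the relaxed class $\tilde K_{\mu,1}$, Steiner symmetrize a minimizing sequence, take a weak limit, and combine Proposition \ref{prop:strip energy_convergence} with weak lower semicontinuity of $\|\cdot\|_2$.

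Where you genuinely depart from the paper is the identity $\int x_2\tilde\omega\,dx=\mu$.

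\textbf{The paper's version.} The paper gives no strip-specific argument here and only says the half-plane argument carries over. In $\R^2_+$ one can raise the impulse while increasing the penalized energy, by upward translation or dilation, and neither operation has an analogue in $\S$. The strip substitute, Proposition \ref{prop:decreasing of I_mu}, is proved only later and fixes an element of $S_{\mu,\lambda}$, i.e.\ it presupposes this very proposition.

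\textbf{Your version.} Your argument avoids that circularity. If $\mu'=\int x_2\tilde\omega<\mu$, the inclusion $K_{\mu',1}\subseteq\tilde K_{\mu,1}$ puts $\tilde\omega$ in $S_{\mu',\lambda}$. Meanwhile the inactive impulse constraint forces $W=0$ in its Euler--Lagrange equation, and Corollary \ref{cor:gamma is zero and positive W} at $\mu'\le M_1$ excludes this. This is also the only place where the hypothesis $\mu<M_1\nu$ is really used. So your route buys a self-contained strip proof using only earlier results, while the paper's route is shorter but leans on structure the strip lacks.

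\textbf{Small refinements.}
\begin{itemize}
\item Your closing hedge is unnecessary, because membership in $S_{\mu',\lambda}$ follows directly from the inclusion above.
\item The case split on $\gamma$ can be collapsed. Since $(W,\gamma)$ is uniquely determined by $\tilde\omega$ (Proposition \ref{prop:general structure}), the corollary's $W>0$ immediately contradicts $W=0$.
\item You should record $\mu'>0$ (from $\tilde\omega\neq0$) and $\mu'<L/2$, so that $(\mu',\lambda)$ lies in the range \eqref{ineq:negativity of I_mu, strip} required by the corollary.
\end{itemize}
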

\begin{proof}

The result follows by reproducing the proof of Lemma 3.2 in \cite{abe2022stability} in our setting. That argument rests on two pillars: i) a Steiner symmetrization that preserves the constraints while raising the penalized energy (given by Proposition \ref{prop:steiner symmetrization}), and ii) the continuity of the kinetic energy with respect to weak-$L^2$ convergence for a symmetrized sequence (given by Proposition \ref{prop:strip energy_convergence}).

In the final step, one can use the same argument in Lemma 3.2 in \cite{abe2022stability} to prove $\int x_2\omega =\mu$. This implies that such a maximizer in the enlarged space $\tilde K_{\mu,\nu}$ is actually a maximizer in the admissible space $K_{\mu,\nu}$.

% It is a direct consequence of Lemma 3.2 in \cite{abe2022stability} to the strip since we have demonstrated all ingredients.

% since we have demonstrated all ingredients, which are: 
% $$ \exists ~ \omega \in \tilde{K}_{\mu,\nu} \tt{ satisfy \eqref{def:symmetry property} such that } E_{2,\lambda}[\omega] = \sup_{\tilde{\omega} \in \tilde{K}_{\mu,\nu}} E_{2,\lambda}[\tilde{\omega}]  \tt{ and } \int_{\S} x_2\omega \,dx = \mu, $$
% where  $\tilde{K}_{\mu,\nu} = \Bigl\{ \omega \in L^2 \cap L^1(S) \;\Big|\; \omega \ge 0,\; \int_{S} x_2\omega \,dx \le \mu,\; \int_{S} \omega \,dx \le \nu \Bigr\}.$ Thus 
% $$-I_{\mu,\nu,\lambda}\geq E_{2,\lambda}[\omega]=\sup_{\tilde{\omega} \in \tilde{K}_{\mu,\nu}} E_{2,\lambda}[\tilde{\omega}]\geq \sup_{\tilde{\omega} \in {K}_{\mu,\nu}} E_{2,\lambda}[\tilde{\omega}]=-I_{\mu,\nu,\lambda},$$
% we conclude $\omega$ is a minimizer of $I_{\mu,\nu,\lambda}$ in $K_{\mu,\nu}$. $\omega$ has a compact support by the Prop \ref{prop:compact support}.

\end{proof}

\subsection{General Convergence Theorem}

Having established the existence of a minimizer, we now aim for the main result of this section: proving that any minimizing sequence is (up to a subsequence) strongly convergent to a minimizer. The concentration-compactness principle (Proposition \ref{prop:concentration-compactness lemma}) will be the primary tool. However, to apply this principle and, in particular, to exclude the dichotomy scenario, the strict decrease of $I_{\mu, \lambda}$ with respect to $\mu$ is required.
We begin by deriving important uniform bounds for all minimizers with small impulse. These bounds, especially the $L^1$ bound, are indispensable for the perturbation argument that proves the strict decreasing (Proposition \ref{prop:decreasing of I_mu}).

\begin{prop}($L^1$ estimate of minimizers)\label{prop:minimizer L1,2 estimate}
    For any $0<\nu<\infty, 0<\mu < M_1 \nu $ and $\lambda \geq \pi^5(2 L^2)^{-1}$, we have
    \begin{align}
        \sup_{\omega\in S_{\mu,\nu,\lambda }} |\spt\omega|\leq \frac{5}{2}\pi^5 \nu^{-2} L^2. \label{ineq:minimizer support estimate}
    \end{align}
    Consequently,
    \begin{align}
        \sup_{\omega\in S_{\mu,\nu,\lambda }}\| \omega \|_1\leq C^{3} \mu\nu^{-3}\lambda^{2} L^3, \label{ineq:minimizer L1 estimate}\\
        \sup_{\omega\in S_{\mu,\nu,\lambda }}\| \omega \|_2\leq C^{3} \mu\nu^{-2}\lambda^{2} L^2, \label{ineq:minimizer L2 estimate}
    \end{align}
    for some constant $C$ independent from $\mu,\nu,\lambda,L$.
\end{prop}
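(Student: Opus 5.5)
The plan is to exploit the structure of minimizers given by Proposition~\ref{prop:general structure} together with Corollary~\ref{cor:gamma is zero and positive W}. By the measure scaling $\hat\omega=\omega/\nu$ we first reduce to $\nu=1$ and keep track of the powers of $\nu$ only at the end. For $0<\mu<M_1$ every $\omega\in S_{\mu,\lambda}$ then satisfies $\omega=\lambda(\psi-Wx_2)_+$ with $\gamma=0$ and $W>0$. Moreover $\spt\omega$ is compact by Proposition~\ref{prop:compact support}, so all integrations below are justified.

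\textbf{Step 1: an identity for $W$.} Multiplying the structure equation by $\omega$ and integrating over $\spt\omega$, where $\psi=\omega/\lambda+Wx_2$, gives
\begin{align*}
2E[\omega]=\int_{\S}\psi\omega\,dx=\frac1\lambda\|\omega\|_2^2+W\mu .
\end{align*}
Hence $E_{2,\lambda}[\omega]=\tfrac12 W\mu$, that is, $W=-2I_{\mu,\lambda}/\mu$. The explicit lower bound \eqref{ineq:lower bound of -I_mu, lmabda} then yields the quantitative lower bound
\begin{align*}
W\ge \frac{2\mu}{5\pi^5L^2}.
\end{align*}
It holds uniformly over the whole set $S_{\mu,\lambda}$, and it is the only place where the choice of $\lambda\ge \pi^5(2L^2)^{-1}$ enters.

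\textbf{Step 2: the support bound \eqref{ineq:minimizer support estimate}.} On $\spt\omega$ we have $\psi/x_2> W$. A Chebyshev argument then gives
\begin{align*}
|\spt\omega|\le W^{-2}\Bigl\|\frac{\psi}{x_2}\Bigr\|_2^2 .
\end{align*}
By Hardy's inequality in $x_2$ (as in Proposition~\ref{prop:psi/x2 decay}, using $\psi(x_1,0)=0$) and \eqref{eqn:kinetic energy and stream fct}, $\|\psi/x_2\|_2^2\le 4\|\nabla\psi\|_2^2=8E[\omega]$. I would then bound $E[\omega]$ from above using Step 1 and the constraints. One option is $E=\tfrac12W\mu+\tfrac1{2\lambda}\|\omega\|_2^2$, with the second term controlled via \eqref{ineq:uniformly bounded}. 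Another is a bound on $\psi$ from $\psi\le\psi^H$, which gives $E\lesssim W\mu$-type quantities. Either way the goal is to make the ratio $E/W^2$ independent of $\mu$, producing the constant $\tfrac52\pi^5L^2$ after reinserting $\nu$.

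\textbf{Step 3: the $L^1$ and $L^2$ bounds.} These follow by bootstrapping. Cauchy--Schwarz gives $\|\omega\|_1\le|\spt\omega|^{1/2}\|\omega\|_2$, and \eqref{ineq:uniformly bounded} gives $\|\omega\|_2\le C\mu^{1/3}\lambda^{2/3}\|\omega\|_1^{2/3}$. Combining the two,
\begin{align*}
\|\omega\|_1^{1/3}\le C|\spt\omega|^{1/2}\mu^{1/3}\lambda^{2/3},
\end{align*}
and cubing yields \eqref{ineq:minimizer L1 estimate}. Substituting this back into \eqref{ineq:uniformly bounded} gives \eqref{ineq:minimizer L2 estimate}. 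The final step is to undo the $\nu$-scaling, under which $\mu\mapsto\mu/\nu$ and the norms scale linearly in $\nu$.

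\textbf{Main obstacle.} The delicate point is Step 2: obtaining an upper bound on $E[\omega]$ that, divided by $W^2\gtrsim \mu^2L^{-4}$, does not blow up as $\mu\to0$. The crude bound $E\le C\lambda^{1/3}\mu^{2/3}$ is not enough, since it gives $|\spt\omega|\lesssim\mu^{-4/3}$. I would first try the identity of Step 1 to express $E$ through $W\mu$ plus the $L^2$ penalty, so that $E/W^2$ behaves like $\mu/W$. If a residual power of $\mu$ survives, I would instead use the pointwise bound \eqref{enq:upperbound estimate} on $\psi$ and the smallness $\mu<M_1$ to absorb it.
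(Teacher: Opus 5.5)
\textbf{Gap in Step 2.} The step you flagged as the main obstacle does not close, and none of your fallbacks can close it. Your Chebyshev--Hardy bound combined with Step 1 gives
\begin{align*}
|\spt\omega|\le \frac{8E[\omega]}{W^2}=\frac{4\mu}{W}+\frac{4\|\omega\|_2^2}{\lambda W^2}.
\end{align*}
The only available lower bound is $W\gtrsim \mu L^{-2}$, so the second term stays bounded as $\mu\to0$ only if $\|\omega\|_2=O(\mu)$.

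The only a priori information you have is \eqref{ineq:uniformly bounded} with $\|\omega\|_1\le 1$, which gives $\|\omega\|_2^2\lesssim\mu^{2/3}\lambda^{4/3}$ and hence the useless bound $|\spt\omega|\lesssim\mu^{-4/3}$. An $O(\mu)$ bound on $\|\omega\|_2$ is exactly \eqref{ineq:minimizer L2 estimate}, which is a consequence of the support bound, so using it would be circular. The same applies to the $O(\mu^2)$ size of $-I_{\mu,\lambda}$ in Remark \ref{remark:order of I_mu}, which is derived from this proposition.

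Your other routes fail for the same reason. The bound $\psi\le\psi^H$ and \eqref{enq:upperbound estimate} are controlled by $\|\omega\|_1$ and $\|\omega\|_2$, for which you again have only $O(1)$ and $O(\mu^{1/3})$ control. Moreover, \eqref{enq:upperbound estimate} requires the symmetry \eqref{def:symmetry property}, which an arbitrary minimizer is not known to have. Smallness of $\mu$ makes the blow-up worse, not better. Even the first term alone only yields $10\pi^5L^2$, so this route cannot reach the stated constant $\frac52\pi^5L^2$.

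\textbf{The missing idea.} Do not bound $\frac1\lambda\|\omega\|_2^2$ at all; make it cancel. Test $-\Delta(\psi-Wx_2)=\lambda(\psi-Wx_2)_+$ against $(\psi-Wx_2)_+$, which vanishes on $\partial(\spt\omega)$ and on $\partial\S$. This gives $\int_{\spt\omega}|\nabla(\psi-Wx_2)|^2=\frac1\lambda\|\omega\|_2^2$. Now expand the square. By the divergence theorem with zero boundary values,
\begin{align*}
\int_{\spt\omega}\partial_2\psi=\int_{\spt\omega}\partial_2(\psi-Wx_2)+W|\spt\omega|=W|\spt\omega|,
\end{align*}
and therefore
\begin{align*}
\int_{\spt\omega}|\nabla\psi|^2=W^2|\spt\omega|+\frac1\lambda\|\omega\|_2^2 .
\end{align*}
The left side is at most $\|\nabla\psi\|_2^2=2E[\omega]=W\mu+\frac1\lambda\|\omega\|_2^2$, by \eqref{eqn:kinetic energy and stream fct} and your Step 1. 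The penalty terms cancel exactly, leaving $W^2|\spt\omega|\le W\mu$. Hence $|\spt\omega|\le\mu/W\le\frac52\pi^5L^2$ by your lower bound on $W$.

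With this replacement for Step 2, your Steps 1 and 3 coincide with the paper's proof.
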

\begin{proof}
    It is sufficient to consider the case when $\nu=1$. For any $\omega\in S_{\mu,\lambda}$, with $\mu\leq M_1$ and $\lambda \geq \pi^5(2 L^2)^{-1}$. Recall the equation \eqref{eqn:general structure of minimizer} with $W=W(\omega)>0 $ and $\gamma=0$. Then, multiplying the equation $-\Delta(\psi-W x_2)=\lambda(\psi-W x_2)_+$ by $\psi-Wx_2$ and integrating by parts,
    \begin{align*}
        \int_{\spt\omega} |\nabla \psi |^2-2W \partial_2\psi +W^2dx = & \int_{\spt\omega} |\nabla (\psi-Wx_2)|^2dx,\\
        = &\lambda \int_{\spt\omega}|\psi -Wx_2|^2dx=\frac{1}{\lambda} \int_{\spt\omega}|\omega|^2dx.
    \end{align*}
    
    Since $\psi-W x_2=0$ on $\p ( \spt\omega ) $, we have
    \begin{align*}
        \int_{\spt\omega} \partial_2\psi = & \int_{\spt\omega} \partial_2(\psi-Wx_2) + W|\spt\omega|, \\ 
        = & \int_{\p ( \spt\omega )} n_2(\psi-Wx_2) + W|\spt\omega|=W|\spt\omega|.
    \end{align*}
    Then 
    \begin{align*}
        2E[\omega] - W^2|\spt\omega| & \geq \int_{\spt\omega} |\nabla \psi |^2-W^2|\spt \omega|= \frac{1}{\lambda} \int_{\spt\omega}|\omega|^2dx.
    \end{align*}
    Hence,
    \begin{align*}
        W^2|\spt\omega| & \leq 2E_{2,\lambda }[\omega]\leq -2I_{\mu,\lambda}.
    \end{align*}
    Meanwhile, 
    \begin{align}
        \begin{aligned}
            -I_{\mu,\lambda} = & E_{2,\lambda}[\omega]=\frac{1}{2}\int_{\spt\omega} \psi \omega -\frac{1}{2\lambda}\int_{\spt\omega} \omega^2, \\
            = & \frac{1}{2}\int_{\spt\omega} (\psi-Wx_2) \omega +\frac{1}{2}W\mu -\frac{1}{2\lambda}\int_{\spt\omega} \omega^2=\frac{1}{2}W\mu,
        \end{aligned}
         \label{eqn:value of I_mu and W}
    \end{align}
    then 
    \begin{align*}
        |\spt\omega| & \leq \mu/W.
    \end{align*}
    Recall there exists a lower bound for $-I_{\mu,\lambda} \geq \frac{\mu^2}{5 \pi^5 L^2}$, then the identity \eqref{eqn:value of I_mu and W} implies 
    \begin{align*}
        W\geq \frac{2 \mu}{5 \pi^5 L^2}.
    \end{align*}
    Hence 
    \begin{align*}
       |\spt\omega| & \leq \mu/W \leq \frac{5}{2}\pi^5 L^2,
    \end{align*}
    and the proof finished by the Cauchy inequality and the $L^2$ uniform bound \eqref{ineq:uniformly bounded} while $\omega_n\equiv \omega$ for all $n$:
    \begin{align*}
        \|\omega\|_1\leq & \|\omega\|_{2}|\spt\omega|^{1/2}\leq C\mu^{1/3}\lambda^{2/3}L\|\omega\|_1^{2/3},\\
        \Rightarrow \|\omega\|_1 \leq & C^3\mu \lambda^{2}L^3,\\
        \Rightarrow \|\omega\|_2 \leq & C\mu^{1/3}\lambda^{2/3}\|\omega\|_1^{2/3}\leq C^3\mu\lambda^{2}L^2.
    \end{align*}
    
\end{proof}

\begin{remark}\label{remark:uniqueness of W}
    The proof shows that, for a small $\mu$, the parameter $W$ is independent of the choice of $\omega$, i.e., $W=-2I_{\mu,\lambda}/\mu$.
\end{remark}

\begin{remark}\label{remark:order of I_mu}
 Combined with estimates \eqref{ineq:estimate of kinetic energy}, \eqref{ineq:minimizer L1 estimate}, and \eqref{ineq:minimizer L2 estimate}, together with the lower bound \eqref{ineq:lower bound of -I_mu, lmabda}, one can confirm $-I_{\mu,\lambda}\in\mathcal{O}(\mu^2)$ as $\mu$ is sufficiently small.
\end{remark}

Before proving the strict monotonicity, we recall that in Proposition~\ref{prop:propoties of I_mu} and the analysis for \eqref{ineq:negativity of I_mu, strip} we constructed an explicit trial function $\omega_0 = c_0 \mathbf{1}_{(0,10L)\times(0,L)}$ supported on a fixed compact set $K\subset\mathcal{S}$ with $E_{2,\lambda}[\omega_0] > 0$. This function serves as a “seed” for perturbing minimizers without violating the mass constraint, thanks to the uniform $L^1$ bound provided by Proposition~\ref{prop:minimizer L1,2 estimate}.

\begin{prop}(Strictly decreasing of $I_\mu$)\label{prop:decreasing of I_mu}
    For any $0<\nu<\infty$ and $\lambda \geq \pi^5(2 L^2)^{-1},$ take $M_2=\min\{C^{-3}\nu^3 \lambda^{-2}L^{-3},M_1\nu\}>0$, then for any $0<\alpha<\mu<M_2$, $I_{\alpha,\nu,\lambda} > I_{\mu,\nu,\lambda}$.
\end{prop}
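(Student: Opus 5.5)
The plan is to compare $I_{\alpha,\nu,\lambda}$ and $I_{\mu,\nu,\lambda}$ by building, from a minimizer at the smaller impulse $\alpha$, an admissible competitor at impulse $\mu$ whose penalized energy is strictly larger. By the measure scaling $\hat\omega=\omega/\nu$ it suffices to treat $\nu=1$.

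\emph{Step 1: a minimizer with mass slack.} Since $0<\alpha<M_2\le M_1$, Proposition~\ref{prop:strip exists of minimizer} gives a minimizer $\omega_\alpha\in S_{\alpha,\lambda}$. By Proposition~\ref{prop:compact support}, $\spt\omega_\alpha$ is compact. By \eqref{ineq:minimizer L1 estimate},
\[
\|\omega_\alpha\|_1\le C^3\alpha\lambda^2L^3 .
\]
The definition $M_2\le C^{-3}\lambda^{-2}L^{-3}$ is chosen so that $C^3\lambda^2L^3\le 1/M_2$. Hence
\[
\|\omega_\alpha\|_1\le \alpha/M_2<1,
\]
so the mass constraint is \emph{not} saturated and there is room to add vorticity. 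Corollary~\ref{cor:gamma is zero and positive W} gives the same conclusion qualitatively; the quantitative form is what is needed to fix the size of the added piece.

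\emph{Step 2: add a far-away seed.} I will use the explicit seed from \eqref{ineq:lower bound of -I_mu, lmabda}. Let $K_s=(s,s+10L)\times(0,L)$ with $s$ large enough that $K_s\cap\spt\omega_\alpha=\emptyset$. The Green's function \eqref{def:green's function for strip} depends only on $x_1-y_1$, so the energy computation for $\omega_0$ transfers verbatim to $K_s$. Set
\[
\eta=\frac{\mu-\alpha}{5L^3}\,1_{K_s},\qquad \omega=\omega_\alpha+\eta .
\]
Then $\int x_2\omega=\alpha+(\mu-\alpha)=\mu$ and $\omega\ge0$. Because $G\ge0$, the cross term $\int\!\!\int G\,\omega_\alpha\eta$ is nonnegative. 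Because the supports are disjoint, $\|\omega\|_2^2=\|\omega_\alpha\|_2^2+\|\eta\|_2^2$. Therefore
\[
E_{2,\lambda}[\omega]\ge E_{2,\lambda}[\omega_\alpha]+E_{2,\lambda}[\eta]\ge -I_{\alpha,\lambda}+\frac{(\mu-\alpha)^2}{5\pi^5L^2},
\]
using \eqref{ineq:lower bound of -I_mu, lmabda} with $\mu$ replaced by $\mu-\alpha<L/2$. This is admissible because $\lambda\ge\pi^5(2L^2)^{-1}$.

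\emph{Step 3: conclude.} If $\omega\in K_\mu$, then $-I_{\mu,\lambda}\ge E_{2,\lambda}[\omega]>-I_{\alpha,\lambda}$, which is the claim.

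The main obstacle is the mass budget $\|\omega\|_1\le1$. Using $\|\eta\|_1=2(\mu-\alpha)/L$, this reads
\[
\alpha/M_2+2(\mu-\alpha)/L\le 1 .
\]
This holds for $\mu<M_2$ only if $M_2$ is also small compared with $L$, i.e.\ $M_2\le L/2$. I expect to absorb this by noting that $M_1$ (and hence $M_2$) can be taken small, or else to shrink $M_2$ harmlessly.

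Should this bookkeeping fail, I would fall back on the relaxed class $\tilde K_{\mu}$ of Proposition~\ref{prop:strip exists of minimizer}. Since $\tilde K_\alpha\subseteq\tilde K_\mu$, one has $I_\mu\le I_\alpha$. If equality held, $\omega_\alpha$ (which has impulse $\alpha<\mu$) would minimize over $\tilde K_\mu$. That contradicts the fact, from the Lemma 3.2 argument of \cite{abe2022stability}, that every minimizer in $\tilde K_\mu$ saturates $\int x_2\omega=\mu$. The mass slack of Step 1 is exactly what that argument needs to rule out unsaturated minimizers.
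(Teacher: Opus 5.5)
Your proof is correct and is essentially the paper's argument. Take a minimizer at the smaller impulse, which is compactly supported and has the $L^1$ slack from \eqref{ineq:minimizer L1 estimate}; add a far-translated multiple of the seed $\omega_0$; then disjoint supports, $G\ge 0$ and \eqref{ineq:lower bound of -I_mu, lmabda} give the strict gain $(\mu-\alpha)^2/(5\pi^5L^2)$.

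The only difference is bookkeeping. You size the seed to carry the whole gap $\mu-\alpha$ and close the mass budget in one step, at the cost of requiring $M_2\le L/2$. That requirement is harmless: $M_1$ is only defined in the range $\mu<L/2$ of \eqref{ineq:negativity of I_mu, strip}, and the paper's own bound $\|\omega_s\|_1\le 1$ tacitly uses $\|\omega_0\|_1=2\mu/L<1$ as well. The paper instead caps the increment by the slack $1-C^3\mu\lambda^2L^3$ and then iterates along $\mu_{k+1}=\mu_k(2-C^3\mu_k\lambda^2L^3)$, so your one-step version is slightly cleaner.
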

\begin{proof}
    It is sufficient to consider the case $\nu=1$. For any $0<\mu <M_2$, fix a $\omega_\mu\in S_{\mu,\lambda}$. Note that $\omega_\mu$ is compactly supported has an $L^1$ upper bound $C^3 \mu\lambda^{2} L^3$ 
    and recall the function $\omega_0$ given in Proposition \ref{prop:propoties of I_mu}. For $s\in(0,1-C^3 \mu\lambda^{2} L^3)$, we define $\omega_{s}\in K_{\mu(1+s)}$ by 
    \begin{align*}
        \omega_s(x)=\omega_{\mu}(x)+s\omega_0(x+\tau e_1), 
    \end{align*}
    where $\tau $ is sufficiently large constant such that $ \spt(\omega_\mu) \cap \spt(\omega_0(\cdot+\tau e_1))=\emptyset$. Then for any $\alpha\in (\mu,~\mu(1+1-C^3 \mu\lambda^{2} L^3) ),$ 
    \begin{align*}
        -I_{\alpha, \lambda} & \geq E_{2,\lambda}[\omega_s]=E_{2,\lambda}[\omega_\mu]+s^2 E_{2,\lambda}[\omega_0]+2s\int_S\int_S G(x,y)\omega_\mu(x)\omega(y+se_1)dxdy,\\
        & > -I_{\mu,\lambda}+\frac{1}{5 \pi^5 L^2}\mu^2s^2,\\
        & =-I_{\mu,\lambda}+\frac{1}{5 \pi^5 L^2}(\alpha-\mu)^2.
    \end{align*}
    Here, $s=\alpha/\mu -1$ and $\alpha=\mu(1+s)$. Then $\mu$ induce a monotone increasing and bounded above sequence $\{ \mu_k \}_{k}$ by $\mu_0=\mu, \mu_{k+1}=\mu_k(1+1-C^3 \mu_k\lambda^{2} L^3)$ with a limit $C^{-3}\lambda^{-2}L^{-3}$. Hence, for any $\mu<\alpha<M_2$, there exists some $k\geq 0$ such that $\alpha\in(\mu_k,\mu_{k+1}]$, then $I_{\alpha,\lambda}<I_{\mu_k,\lambda}\leq I_{\mu,\lambda}$.
    
\end{proof}

\begin{prop}\label{prop:concentration-compactness lemma}
Let \(0<\mu<\infty\). Let \(\{\rho_n\}\subset L^1(\S)\) satisfy
\begin{align*}
\rho_n\ge 0,\quad n\ge 1,\qquad \int_{S}\rho_n\,dx = \mu_n \to \mu \quad\text{as } n\to\infty .
\end{align*}
There exists a subsequence \(\{\rho_{n_k}\}\) satisfying one of the following:
\begin{enumerate}
\item \textbf{(Compactness)} There exists a sequence \(\{y_k\}\subseteq\overline{\S}\) such that \(\rho_{n_k}(\cdot+y_k)\) is tight, i.e., for every \(\epsilon>0\) there exists \(R>0\) such that
\begin{align*}
\liminf_{k\to\infty}\int_{B(y_k,R)\cap \S}\rho_{n_k}\,dx \ge \mu-\epsilon.
\end{align*}
In addition, it is possible to assume $y_{k,2}=0$ for all $k$.

\item \textbf{(Vanishing)} For each \(R>0\)
\begin{align*}
\lim_{k\to\infty}\sup_{y\in \S}\int_{B(y,R)\cap \S}\rho_{n_k}\,dx = 0 .
\end{align*}

\item \textbf{(Dichotomy)} There exists \(\alpha\in(0,\mu)\) such that for every \(\epsilon>0\) there exist \(k_0\ge 1\) and \(\{\rho_k^1\},\{\rho_k^2\}\subset L^1(\S)\) such that \(\spt\rho_k^1\cap\spt\rho_k^2=\emptyset\), \(0\le\rho_k^i\le\rho_{n_k}\) for \(i=1,2\), and
\begin{align*}
\limsup_{k\to\infty}\Bigl\{\|\rho_{n_k}-\rho_k^1-\rho_k^2\|_{L^1}
+ \Bigl|\int_{S}\rho_k^1\,dx-\alpha\Bigr|
+ \Bigl|\int_{S}\rho_k^2\,dx-(\mu-\alpha)\Bigr|\Bigr\} \le \epsilon ,
\end{align*}
\begin{align*}
d(\spt\rho_k^1,\spt\rho_k^2) \to \infty \quad\text{as } k\to\infty .
\end{align*}
\end{enumerate}
\end{prop}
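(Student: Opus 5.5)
This is Lions' concentration-compactness lemma on the strip; the plan is to adapt Lions' original proof using the Lévy concentration function. For each $n$ define
\begin{align*}
Q_n(R)=\sup_{y\in\overline{\S}}\int_{B(y,R)\cap\S}\rho_n\,dx,\qquad R\ge 0.
\end{align*}
Each $Q_n$ is nondecreasing in $R$ and satisfies $0\le Q_n\le\mu_n$, and $\mu_n$ is bounded. By Helly's selection theorem (diagonal argument over rational $R$, then monotone extension) I extract a subsequence $n_k$ and a nondecreasing $Q$ with $Q_{n_k}(R)\to Q(R)$ for every $R>0$. Set $\alpha=\lim_{R\to\infty}Q(R)\in[0,\mu]$. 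The three alternatives correspond to $\alpha=0$, $\alpha=\mu$, and $0<\alpha<\mu$.

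\emph{Vanishing case} ($\alpha=0$). Then $Q(R)=0$ for all $R$, so $\sup_y\int_{B(y,R)\cap\S}\rho_{n_k}\to0$, which is alternative (2).

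\emph{Compactness case} ($\alpha=\mu$). Fix $\epsilon$ small and $R_0$ with $Q(R_0)>\mu-\epsilon$. For large $k$ choose $y_k$ with $\int_{B(y_k,R_0)}\rho_{n_k}>\mu-\epsilon$. For a smaller $\epsilon'$ the corresponding ball $B(y_k',R_0')$ carries mass greater than $\mu-\epsilon'$, so the two balls overlap once $\epsilon+\epsilon'<\mu$. Hence $|y_k-y'_k|\le R_0+R_0'$, and enlarging the radius to $2R_0'+R_0$ around $y_k$ (fixed at the first $\epsilon<\mu/2$) gives tightness. This is Lions' standard step. For the additional normalization $y_{k,2}=0$: since $\S$ has height $L$, replacing $y_k$ by $(y_{k,1},0)$ moves the center by at most $L$. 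Enlarging $R$ by $L$ then keeps the same lower bound, so the normalization is free. This is the only place where the geometry of $\S$ enters.

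\emph{Dichotomy case} ($0<\alpha<\mu$). Given $\epsilon$, pick $R$ with $Q(R)>\alpha-\epsilon$. Because $Q_{n_k}(R_k)\to\alpha$ along suitable $R_k\to\infty$ (a diagonal choice using pointwise convergence and monotonicity), I can take $R_k\to\infty$ with $Q_{n_k}(R_k)\le\alpha+\epsilon$ for large $k$. For large $k$ choose $y_k$ with $\int_{B(y_k,R)}\rho_{n_k}>\alpha-\epsilon$. Define
\begin{align*}
\rho^1_k=\rho_{n_k}1_{B(y_k,R)},\qquad \rho^2_k=\rho_{n_k}1_{\S\setminus B(y_k,R_k)}.
\end{align*}
Then $0\le\rho^i_k\le\rho_{n_k}$, the supports are disjoint, and their distance is at least $R_k-R\to\infty$. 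The mass of $\rho^1_k$ lies in $(\alpha-\epsilon,\alpha+\epsilon]$. The annulus mass is at most $Q_{n_k}(R_k)-(\alpha-\epsilon)\le2\epsilon$, so $\|\rho_{n_k}-\rho^1_k-\rho^2_k\|_1\le2\epsilon$ and $\int\rho^2_k$ is within $3\epsilon+|\mu_{n_k}-\mu|$ of $\mu-\alpha$. Rescaling $\epsilon$ gives alternative (3).

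I expect the main obstacle to be the bookkeeping in the dichotomy case: producing $R_k\to\infty$ uniformly with $Q_{n_k}(R_k)\le\alpha+\epsilon$ for a single subsequence that works for every $\epsilon$. I will handle this as Lions does. Since $Q_{n_k}\to Q$ pointwise and $Q\le\alpha$, for each integer $m$ there is $k_m$ with $Q_{n_k}(m)\le\alpha+1/m$ for $k\ge k_m$. Setting $R_k=m$ for $k_m\le k<k_{m+1}$ then gives $R_k\to\infty$, and monotonicity of each $Q_n$ in $R$ keeps the bound valid for this choice of $R_k$. The restriction of balls to $\S$ causes no difficulty, because all the mass estimates are intrinsic to $\rho_n$.
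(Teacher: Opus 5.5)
Your proof is correct, but it takes a different route from the paper. The paper does not redo Lions' argument. It extends each $\rho_n$ by zero to $\R^2_+$ and invokes the half-plane version with varying mass $\mu_n\to\mu$ (Lemma 4.1 of Abe--Choi, itself an adaptation of Lions' Lemma I.1). It then transfers the three alternatives back to $\S$:
\begin{itemize}
\item vanishing, because a supremum over $y\in\S$ is dominated by one over $y\in\R^2_+$;
\item dichotomy, because $0\le\rho^i_k\le\rho_{n_k}$ forces $\spt\rho^i_k\subseteq\overline{\S}$;
\item compactness, by arguing after the fact that the half-plane centres $y_k$ have bounded height (otherwise $B(y_k,R)$ eventually misses $\S$) and then enlarging $R$.
\end{itemize}

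You instead rebuild the lemma from the L\'evy concentration function, with the supremum taken directly over centres $y\in\overline{\S}$. This has several consequences:
\begin{itemize}
\item $y_k\in\overline{\S}$ holds by construction, and the normalization $y_{k,2}=0$ reduces to a shift of at most $L$.
\item You handle $\mu_n\to\mu$ yourself. In the overlap step, the disjoint-balls contradiction needs $\mu_{n_k}<2\mu-\epsilon-\epsilon'$ for large $k$, which holds since $\mu_{n_k}\to\mu$.
\item Your choice of $R_k$ through the integer radii $m$ and thresholds $k_m$ gives a single sequence $R_k\to\infty$ that works for every $\epsilon$, which is exactly what the dichotomy alternative requires.
\end{itemize}

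Your version is self-contained and shows clearly that the strip geometry enters only through the height bound $L$. The paper's version is shorter, but it depends on the cited half-plane lemma and on a somewhat delicate a posteriori bound for $y_{k,2}$.

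One small imprecision: a diagonal extraction over rational $R$ alone gives convergence only at continuity points of the limit, unless you extract once more. This does not matter for your argument, since you only evaluate $Q_{n_k}$ at integer radii (or at one fixed radius, which you may take to be an integer), so convergence on a countable set suffices.
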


\begin{proof}
The assertion is proved by Lemma I.1 in \cite{LIONS1984109} for $\R^2$ and the fixed mass \(\mu_n=\mu\) by using Lévy's concentration function. The case of $\mu_n \rightarrow \mu$ is proved by the Lemma 4.1 in \cite{abe2022stability} for $\R_2^+$. This proof is a direct consequence of result in \cite{abe2022stability} by view $L^i(\S)=\{\omega\in L^i(\R_+^2) ~ | ~ \spt\omega\subseteq \overline{\S} \}$. From this perspective, the sequence of $\rho_n$ meets the requirement of Lemma 4.1 in \cite{abe2022stability} and then yields three possibilities: compactness, vanishing, or dichotomy.

% Consider the sequence $\{\rho_n\} \subset L^1(\mathcal{S})$. Extending each $\rho_n$ by zero 
% outside $\mathcal{S}$ yields a sequence of finite measures on $\mathbb{R}^2$ with total mass 
% $\mu_n \to \mu$. The concentration-compactness lemma of Lions 
% \cite[Lemma I.1]{Lions1984} applies directly in $\mathbb{R}^2$, yielding three 
% possibilities: compactness, vanishing, or dichotomy.

% The statements for vanishing and dichotomy translate verbatim because the $L^1$ norm on 
% $\mathcal{S}$ coincides with that on $\mathbb{R}^2$ after zero extension, and the 
% disjointness of supports is unaffected by the domain restriction.

\begin{enumerate}
\item \textbf{(Compactness)} In this case, the tightness is originally stated with centres $y_k \in \mathbb{R}^2_+$. If $\limsup_k y_{k,2}=\infty$, then for large $k$, the ball $B(y_k, R)$ would not intersect $\mathcal{S}$, forcing the integral to vanish and contradicting tightness. Therefore, $\sup_k y_{k,2} < \infty$. By replacing original $R$ by $R+\limsup_k y_{k,2}$, it is possible to assume $y_{2,k}=0$ for all $k$, so that $y_k \in \overline{\S}$ for all $k$.

\item \textbf{(Vanishing)} In this case, one can use the set relation $\S\in \R^2_+$ to conclude for any \(R>0\),
\begin{align*}
0\leq \lim_{k\to\infty}\sup_{y\in \S}\int_{B(y,R)\cap \S}\rho_{n_k}\,dx \leq \lim_{k\to\infty}\sup_{y\in \R_+^2}\int_{B(y,R)\cap \R_+^2}\rho_{n_k}\,dx = 0 .
\end{align*}

\item \textbf{(Dichotomy)} In this case, due to \(0\le\rho_k^i\le\rho_{n_k}\), then $\spt \rho_{n_k}^i\subseteq \spt\rho_k \subseteq \overline{\S}$ for \(i=1,2\), so that \(\{\rho_k^1\},\{\rho_k^2\}\subset L^1(\S)\) and conclude the result.
\end{enumerate}

\end{proof}

Under the strict decreasing of \(I_{\mu,\lambda}\) (Proposition \ref{prop:decreasing of I_mu}), the convergence of kinetic energy for Steiner-symmetrized sequences (Proposition \ref{prop:strip energy_convergence}), and the concentration-compactness alternatives (Proposition \ref{prop:concentration-compactness lemma}), we can now outline the proof of the general convergence theorem for the strip. This theorem parallels the strategy of Abe \& Choi \cite{abe2022stability} for the half-plane, but its validity in the strip domain \(\mathcal{S}\) hinges on the adaptations made in the preceding propositions.

The proof proceeds by eliminating vanishing and dichotomy for a minimizing sequence \(\{\omega_n\}\). Vanishing is ruled out by the negativity of \(I_{\mu,\lambda}\). The more delicate case is dichotomy, where a minimizing sequence splits into two well-separated components. By applying Steiner symmetrization to these components and using the strict monotonicity of \(I_{\mu,\lambda}\), we deduce a contradiction to its minimality, forcing compactness instead. We now formalize this result.

% We have completed all the materials required for the general convergence theorem in \cite{abe2022stability} in the Class \ref{def:case Omega=S}. Therefore, the general convergence theorem for the strip model can be obtained through the same step; we omit the proof.

\begin{thm}\label{thm:general convergence theorem for strip}
    Let $0<\nu<\infty, \mu<M_2 \nu$ and $\lambda \geq \pi^5(2 L^2)^{-1}$. For any minimizing sequence $\{\omega_n\}$ satisfying $\omega_n\in K_{\mu_n,\nu}$, $\mu_n \rightarrow \mu $ and $-E_{2,\lambda}[\omega_n] \rightarrow I_{\mu,\nu,\lambda}$, then there exists a subsequence, still denoted by $\{\omega_n\}$, s.t. there exists $\omega\in K_{\mu,\nu}$, 
    $\omega_n\rightarrow\omega$ and $x_2\omega_n\rightarrow x_2\omega$ strongly in $L^2(\S)$ and $L^1(\S)$ respectively.
\end{thm}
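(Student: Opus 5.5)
Note first that the theorem cannot give convergence of the full sequence, since the strip is invariant under horizontal translations. The intended conclusion is strong convergence after a horizontal shift: we find $\{y_n\}\subset\R\times\{0\}$ with $\omega_n(\cdot+y_n)\to\omega$ in $L^2(\S)$ and $x_2\omega_n(\cdot+y_n)\to x_2\omega$ in $L^1(\S)$. Horizontal shifts preserve $K_{\mu_n,\nu}$ and $E_{2,\lambda}$, so they do not affect the argument. We take $\nu=1$ and follow the scheme of Theorem 4.2 in \cite{abe2022stability}.

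We apply Proposition \ref{prop:concentration-compactness lemma} to $\rho_n=x_2\omega_n$, whose masses are $\int\rho_n=\mu_n\to\mu$. By Remark \ref{remark:uniformly bounded}, $\omega_n$ is bounded in $L^1\cap L^2$.

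\emph{Vanishing.} Suppose vanishing occurs. For fixed $\delta>0$, split $\omega_n$ into the part on $\{x_2<\delta\}$ and the rest. The energy of the part near the boundary is $O(\delta^{1/2})$ by \eqref{ineq:estimate of kinetic energy}, since its weighted mass is bounded by $\delta\|\omega_n\|_1$. For the remainder we use $\omega_n\le\delta^{-1}\rho_n$. Vanishing makes the local $L^1$ mass small on every ball, so the near-field part of $\int\psi_n\omega_n$ is small. The far-field part is small because $G(x,y)\le G_H(x,y)\lesssim x_2y_2/|x-y|^2$. Together these give $\limsup E[\omega_n]\le o_\delta(1)$, so $\limsup E_{2,\lambda}[\omega_n]\le 0$. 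This contradicts $-I_{\mu,\lambda}\ge \mu^2/(5\pi^5L^2)>0$ from \eqref{ineq:lower bound of -I_mu, lmabda}.

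\emph{Dichotomy.} Suppose dichotomy occurs with some $\alpha\in(0,\mu)$. Set $\omega_n^i=\rho_n^i/x_2$ and $\omega_n^3=\omega_n-\omega_n^1-\omega_n^2$, where $\omega_n^3$ has weighted mass at most $\epsilon$. The cross terms vanish in the limit: the supports separate, so the part of $G$ at large distance contributes $O(d^{-2})$ via $G\le G_H$, and the remaining contributions are controlled by \eqref{ineq:estimate of energy crossing term} with $O(\epsilon^{1/2})$ errors. This yields
\[
-I_{\mu,\lambda}\le E_{2,\lambda}[\omega_n^1]+E_{2,\lambda}[\omega_n^2]+o(1)+O(\epsilon^{1/2})\le -I_{\alpha_1,\lambda}-I_{\mu-\alpha_2,\lambda}+O(\epsilon^{1/2}),
\]
with $\alpha_1\to\alpha$ and $\alpha_2\to\alpha$. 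By Proposition \ref{prop:steiner symmetrization} we may Steiner symmetrize each piece without lowering its energy, which makes Proposition \ref{prop:strip energy_convergence} available. Letting $\epsilon\to0$ gives $I_{\mu}\ge I_\alpha+I_{\mu-\alpha}$.

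We must now exclude this. Proposition \ref{prop:decreasing of I_mu} is only monotonicity, but we need strict subadditivity. The plan is to reuse the construction in the proof of Proposition \ref{prop:decreasing of I_mu}. Take minimizers $\omega_\alpha\in S_{\alpha,\lambda}$ and $\omega_{\mu-\alpha}\in S_{\mu-\alpha,\lambda}$; by Proposition \ref{prop:strip exists of minimizer} they exist and have compact support. Their $L^1$ norms sum to at most $C^3\mu\lambda^2L^3<1$ by Proposition \ref{prop:minimizer L1,2 estimate} and the choice of $M_2$. Place them at a finite horizontal distance $\tau$. The sum then lies in $K_\mu$, and its interaction term $\iint G\,\omega_\alpha\omega_{\mu-\alpha}(\cdot+\tau e_1)$ is strictly positive because $G>0$. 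Hence $I_\mu< I_\alpha+I_{\mu-\alpha}$, which is a contradiction. I expect the main obstacle to be making the cross-term estimates in the dichotomy step rigorous.

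\emph{Compactness.} Compactness therefore holds with centres $y_n=(y_{n,1},0)$. Replace $\omega_n$ by its translate $\omega_n(\cdot+y_n)$. Then the weighted mass is tight, up to $\epsilon$, in a bounded set $D_0$. Extract a weak limit $\omega_n\rightharpoonup\omega$ in $L^2$. We then repeat the proof of Proposition \ref{prop:weak_finite_volume_energy_convergence} with this $D_0$: the tails are handled by \eqref{ineq:estimate of kinetic energy} and \eqref{ineq:estimate of energy crossing term}, and on $D_0\times D_0$ the kernel satisfies $G\in L^2$, which gives $E[\omega_n]\to E[\omega]$.

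Tightness also shows $\int x_2\omega=\mu$, and lower semicontinuity gives $\int\omega\le1$, so $\omega\in K_\mu$. The chain \eqref{eqn:conclusion of claim 1} then shows $\omega\in S_{\mu,\lambda}$ and $\|\omega_n\|_2\to\|\omega\|_2$, hence strong $L^2$ convergence. Strong $L^1$ convergence of $x_2\omega_n$ follows from the Fatou argument at the end of Theorem \ref{thm:exists of minimizer}.
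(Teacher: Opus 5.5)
Your proof is correct. It uses the same concentration–compactness skeleton as the paper: you apply it to $x_2\omega_n$ and exclude vanishing via $I_{\mu,\lambda}<0$. Where you differ is the inequality that closes the dichotomy case.

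The paper cites Steiner symmetrization plus the strict monotonicity of Proposition~\ref{prop:decreasing of I_mu}, and defers the details to Theorem~1.3 of \cite{abe2022stability}. You note that each piece only satisfies $\int\omega_n^i\le1$. So what the splitting actually gives is $I_{\mu,\lambda}\ge I_{\alpha,\lambda}+I_{\mu-\alpha,\lambda}$, and monotonicity alone does not contradict this.

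You contradict it with strict subadditivity, obtained by gluing two compactly supported minimizers at finite horizontal distance:
\begin{itemize}
\item the bound \eqref{ineq:minimizer L1 estimate} and $\mu<M_2$ keep the glued function in $K_\mu$;
\item $G>0$ makes the interaction term strictly positive.
\end{itemize}
This is the construction from the proof of Proposition~\ref{prop:decreasing of I_mu}, with the seed $\omega_0$ replaced by a second minimizer. It therefore stays inside the strip and uses only what the paper proves for small $\mu$: existence, compact support, and the $L^1$ bound. The paper's route instead leans on a half-plane argument where scaling is available. The Steiner symmetrization you invoke in this step is not needed.

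Two further differences favour your version:
\begin{itemize}
\item In the compactness case you localize via tightness and use $G\in L^2(D_0\times D_0)$, as in Proposition~\ref{prop:weak_finite_volume_energy_convergence}. The paper uses Proposition~\ref{prop:strip energy_convergence}, which requires every $\omega_n$ to satisfy \eqref{def:symmetry property} and so does not apply to an arbitrary minimizing sequence.
\item You correctly note that convergence can only hold after horizontal translations. The translates $\omega(\cdot+ne_1)$ of a minimizer form a minimizing sequence with no strongly convergent subsequence.
\end{itemize}

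One step needs adding. Letting $\epsilon\to0$ to reach $I_{\mu,\lambda}\ge I_{\alpha,\lambda}+I_{\mu-\alpha,\lambda}$ requires continuity of $\beta\mapsto I_{\beta,\lambda}$, which the paper never proves. It follows quickly. Scaling near-minimizers by $c=\beta'/\beta<1$ preserves the mass constraint and gives $I_{\beta',\lambda}\le(\beta'/\beta)^2I_{\beta,\lambda}$ for $\beta'<\beta$. Combined with the monotonicity of Proposition~\ref{prop:decreasing of I_mu} on $(0,M_2)$, this yields continuity from both sides.
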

\begin{proof}
The proof follows the concentration-compactness argument of Theorem 1.3 in \cite{abe2022stability}, which we briefly outline. By Proposition \ref{prop:concentration-compactness lemma}, the sequence $x_{2}\omega_{n}$ (up to a subsequence) satisfies one of three scenarios: compactness, vanishing, or dichotomy. Vanishing cannot occur since $I_{\mu,\lambda} < 0$. If dichotomy were to occur, the sequence would split into two non-trivial parts with strictly smaller impulses. Applying Steiner symmetrization (Proposition \ref{prop:strip energy_convergence}) and the strict monotonicity of $I_{\mu,\lambda}$ (Proposition \ref{prop:decreasing of I_mu}), one derives a contradiction to the minimizing property of $\{\omega_{n}\}$, as shown in detail in the Theorem~1.3, \cite{abe2022stability}. Therefore, the sequence must be locally compact up to horizontal translations along the boundary $\S$. The strong convergence of the kinetic energy then follows from Proposition \ref{prop:strip energy_convergence}, which identifies the limit as a minimizer.
\end{proof}

\section{Stability respect to the minimizer}\label{sec:stability}

Having established the existence of minimizers and a general convergence theorem, we now turn to their orbital stability with $C_c^{\infty}(\Omega)$ initial data. Using the strong convergence shown in Theorem \ref{thm:exists of minimizer} for $\Omega=\D$ and Theorem \ref{thm:general convergence theorem for strip} for $\Omega=\S$, we can show that the set of minimizers is orbital stable in the sense that any flow starting close to a minimizer will remain close to the set of minimizers during its lifespan. We conclude this section by deriving the precise functional form of these minimizers.

The proof here is the same as that for Theorem 1.4 in \cite{abe2022stability}, but to ensure completeness, we present it below.

\begin{thm}\label{thm:stable wrt minimizer}
    Let $\nu,\mu,\lambda$ given in the Theorem \ref{thm:exists of minimizer} for the case $\Omega=\D$ and given in the Theorem \ref{thm:general convergence theorem for strip} for the case $\Omega=\S$, then $S_{\mu,\nu,\lambda}$ is orbitally stable in the sense that: for any $\epsilon>0$, there exists $\delta>0$ s.t. $\forall ~ \xi_0\in C_c^{\infty}(\Omega), ~ x_2\xi_0\in L^1(\Omega), ~ \xi_0\geq0, ~ \|\xi_0\|\leq \nu$ and 
    \begin{align*}
        \inf_{\omega\in S_{\mu,\nu,\lambda}}\Big\{ \|\xi_0-\omega\|_{2}+\|x_2(\xi_0-\omega)\|_{1} \Big\}\leq \delta,
    \end{align*}
    the solution $\xi(t)$ satisfy: $\forall ~ t \in [0,T_{\text{lifespan}})$,
    \begin{align}
        \inf_{\omega\in S_{\mu,\nu,\lambda}}\Big\{ \|\xi(t)-\omega\|_{2}+\|x_2(\xi(t)-\omega)\|_{1} \Big\}\leq \epsilon. \label{eqn:stable result}
    \end{align}
\end{thm}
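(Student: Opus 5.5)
We argue by contradiction, following the scheme of Theorem 1.4 in \cite{abe2022stability}. Suppose the statement fails. Then there exist $\epsilon_0>0$, nonnegative initial data $\xi_{0,n}\in C_c^\infty(\Omega)$ with $\|\xi_{0,n}\|_1\le\nu$, elements $\omega_n\in S_{\mu,\nu,\lambda}$ with $\|\xi_{0,n}-\omega_n\|_2+\|x_2(\xi_{0,n}-\omega_n)\|_1\to0$, and times $t_n\in[0,T_{\text{lifespan}})$ such that
\begin{align*}
\inf_{\omega\in S_{\mu,\nu,\lambda}}\Big\{\|\xi_n(t_n)-\omega\|_2+\|x_2(\xi_n(t_n)-\omega)\|_1\Big\}\ge\epsilon_0 .
\end{align*}
The goal is to show that $\{\xi_n(t_n)\}$ is a minimizing sequence in the sense of Theorem \ref{thm:exists of minimizer} (for $\Omega=\D$) or Theorem \ref{thm:general convergence theorem for strip} (for $\Omega=\S$). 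The strong convergence along a subsequence given by those theorems then contradicts the displayed lower bound.

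The first step uses the conservation laws of the smooth Euler flow with slip boundary condition. Transport by a divergence-free field tangent to $\p\Omega$ preserves $\xi\ge0$ and all $L^q$ norms, in particular $\|\xi(t)\|_1$ and $\|\xi(t)\|_2$. Since $\Omega\subseteq\R^2_+$ has boundary condition $\psi=0$ on the bottom, the impulse $\int x_2\xi\,dx$ is conserved: $\frac{d}{dt}\int x_2\xi=\int u_2\xi$, which vanishes after integrating by parts using $\xi=-\Delta\psi$, $u=\nabla^\perp\psi$ and the boundary terms, exactly as in \cite{abe2022stability}. The kinetic energy $E[\xi(t)]$ is also conserved. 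For $\Omega=\D$ the boundary is more general, so these identities require justification. I will rely on the applicability of the Biot--Savart law assumed in the definition of Class \ref{def:case Omega=D}, together with compact support of $\xi_{0,n}$, so that boundary terms vanish. For the strip, the decay of Proposition \ref{prop:kinetic energy and stream function, and decay} handles the terms at $|x_1|\to\infty$. I expect this justification of conservation of impulse and energy on general domains to be the main technical obstacle, and I would treat it via the cut-off argument used in Proposition \ref{prop:kinetic energy and stream function, and decay}.

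Granting conservation, set $\mu_n=\int x_2\xi_{0,n}\,dx\to\mu$, so that $\xi_n(t_n)\in K_{\mu_n,\nu}$. By the energy-difference estimate \eqref{ineq:estimate of energy difference}, the uniform $L^1$ and $L^2$ bounds on $S_{\mu,\nu,\lambda}$, and the convergence $\xi_{0,n}-\omega_n\to0$ in $L^2$ and weighted $L^1$, we obtain $E_{2,\lambda}[\xi_{0,n}]-E_{2,\lambda}[\omega_n]\to0$. For the strip, the uniform bounds come from Proposition \ref{prop:minimizer L1,2 estimate}. For $\D$, they come from \eqref{ineq:uniformly bounded} together with $\|\omega\|_1\le\nu$. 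The $L^1$ difference is not directly controlled, but $\|\xi_{0,n}\|_1,\|\omega_n\|_1\le\nu$ bound it, and the factor $\|\xi_{0,n}-\omega_n\|_2^{1/2}$ tends to zero, so the product still vanishes. Hence $-E_{2,\lambda}[\xi_n(t_n)]=-E_{2,\lambda}[\xi_{0,n}]\to I_{\mu,\nu,\lambda}$, and $\{\xi_n(t_n)\}$ is a minimizing sequence.

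Applying Theorem \ref{thm:exists of minimizer} or Theorem \ref{thm:general convergence theorem for strip} gives a subsequence and some $\omega\in S_{\mu,\nu,\lambda}$ with $\xi_n(t_n)\to\omega$ in $L^2$ and $x_2\xi_n(t_n)\to x_2\omega$ in $L^1$. For the strip this holds after a horizontal translation $\xi_n(t_n)(\cdot+y_ne_1)$. Because $S_{\mu,\nu,\lambda}$ is invariant under horizontal translations in $\S$, the translated limit still lies in the set of minimizers. The infimum in the displayed inequality therefore tends to $0$, contradicting $\epsilon_0>0$.
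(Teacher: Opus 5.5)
Your argument follows the same contradiction scheme as the paper's proof. Closeness of $\xi_{0,n}$ to some $\omega_n\in S_{\mu,\nu,\lambda}$, together with \eqref{ineq:estimate of energy difference}, makes $\{\xi_{0,n}\}$ minimizing. Conservation laws transfer this to $\{\xi_n(t_n)\}$, and the convergence theorems give the contradiction. For $\Omega=\S$ the argument is sound. Your use of horizontal translations, plus the translation invariance of $S_{\mu,\nu,\lambda}$, is more careful than the paper, whose strip convergence theorem is stated without translations.

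\textbf{The gap is conservation of the impulse for $\Omega=\D$.} This is not a matter of justifying that boundary terms vanish; the identity is false in general. Take $u=\nabla^\perp\psi=(-\p_2\psi,\p_1\psi)$, $\xi=-\Delta\psi$, and $\psi=0$ on $\p\Omega$, so that $\nabla\psi=(\p_{\mathcal{N}}\psi)\mathcal{N}$ there. The computation you sketch then gives
\begin{align*}
\frac{d}{dt}\int_{\Omega}x_2\xi\,dx=\int_{\Omega}u_2\,\xi\,dx=-\int_{\Omega}\Delta\psi\,\p_1\psi\,dx=-\frac12\int_{\p\Omega}|\nabla\psi|^2\,\mathcal{N}_1\,dS .
\end{align*}
The other convention for $\nabla^\perp$ only flips the sign.

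\begin{itemize}
\item This vanishes for the half-plane and the strip only because $\mathcal{N}_1\equiv 0$ on their boundaries.
\item Compact support of $\xi$ does not help. By the Hopf lemma, $|\nabla\psi|>0$ on every smooth portion of $\p\Omega$ whenever $\xi\ge 0$, $\xi\not\equiv 0$. So the boundary integral can vanish only by cancellation, and no cut-off argument can establish it.
\item Concretely, take the disk $B((0,2),1)$, which lies in Class~\ref{def:case Omega=D}. To leading order, a concentrated vortex in it moves along a circle about the center. Its height, and hence $\int x_2\xi\,dx$, therefore changes by a non-negligible amount.
\end{itemize}
Consequently $\xi_n(t_n)$ need not lie in $K_{\mu_n',\nu}$ with $\mu_n'\to\mu$, and Theorem~\ref{thm:exists of minimizer} cannot be invoked. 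Energy and the $L^q$ norms are conserved without difficulty; only the impulse fails.

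You cannot close this by following the paper, because its proof has the same hole. It asserts $\|x_2\xi_n\|_1=\|x_2\xi_{0,n}\|_1$ on the grounds that $\xi$ is conserved along particle trajectories. That only yields conservation of $\|\xi\|_q$, not of a moment weighted by the non-transported function $x_2$.

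As structured, the argument proves the statement for $\Omega=\S$ only. For $\D$ you need a genuinely new ingredient. Either add a hypothesis on the domain ensuring $\int_{\p\D}|\nabla\psi|^2\mathcal{N}_1\,dS=0$ along the flow, or replace the impulse constraint in $K_{\mu,\nu}$ by a quantity that is actually conserved in $\D$.
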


\noindent\textit{Proof: } It suffices to prove the case when $\nu=1$. Suppose that \eqref{eqn:stable result} is false. Then there exists $\epsilon_0>0$, a sequence $\{\xi_{0,n}\}\subseteq C_c^\infty(\mathcal{D}), ~ \xi_{0,n} \geq 0, ~ \|\xi_{0,n}\|_1\leq 1 $ and $t_n$ in the lifespan of the solution $\xi_n$ based on the initial data $\xi_{0,n}$ s.t. 
\begin{align*}
    \inf_{\omega\in S_{\mu,\lambda}}\Big\{ \|\xi_{0,n}-\omega\|_{2}+\|x_2(\xi_{0,n}-\omega)\|_{1} \Big\} & \leq 1/n, \\ \inf_{\omega\in S_{\mu,\lambda}}\Big\{ \|\xi_n(t_n)-\omega\|_{2}+\|x_2(\xi_n(t_n)-\omega)\|_{1} \Big\} &> \epsilon_0.
\end{align*}
We write $\xi_n=\xi_n(t_n)$ by suppressing $t_n$. We take $\omega_n\in  S_{\mu,\lambda}$ such that $\|\xi_{0,n}-\omega\|_{2}+\|x_2(\xi_{0,n}-\omega)\|_{1}\rightarrow 0$. By \eqref{ineq:estimate of energy difference}, 
$$ | E_{2,\lambda}[\xi_{0,n}]+I_{\mu,\lambda} | = | E_{2,\lambda}[\xi_{0,n}]-E_{2,\lambda}[\omega_n] | \rightarrow 0 \text{ as } n \rightarrow \infty$$
Thus $\{\xi_{0,n}\}$ be a minimizing sequence s.t. $\xi_{0,n}\in K_{\mu_n}, \mu_n\rightarrow\mu$ and $-E_{2,\lambda}[\xi_{0,n}] \rightarrow I_{\mu,\lambda} $ as $n\rightarrow\infty$.

Knowing that $\xi$ conserved on particle trajectory map, then $\|\xi_n\|_{2}=\|\xi_{0,n}\|_{2}$ and $\|x_2\xi_n\|_1=\|x_2\xi_{0,n}\|_1$. Meanwhile, since the kinetic energy is conserved in time, then $E_{2,\lambda}[\xi_{0,n}]=E_{2,\lambda}[\xi_{n}]$ for any $n$.

Hence $\{\xi_{n}\}$ be a minimizing sequence s.t. $\xi_{n}\in K_{\mu_n}, \mu_n\rightarrow\mu$ and $-E_{2,\lambda}[\xi_{n}] \rightarrow I_{\mu,\lambda} $ as $n\rightarrow\infty$. By the Theorem \ref{thm:exists of minimizer}, by choosing a subsequence still denoted by $\{\xi_{n}\}$, there exists $\xi\in S_{\mu,\lambda}$ s.t. $\omega_n\rightarrow\omega$ and $x_2\omega_n\rightarrow x_2\omega$ in $L^2(\Omega)$ and $L^1(\Omega)$ respectively. Sending $n\rightarrow\infty$, 
\begin{align*}
    0=&\inf_{\omega\in S_{\mu,\lambda}}\Big\{ \|\xi-\omega\|_{2}+\|x_2(\xi-\omega)\|_{1} \Big\},\\
    =&\inf_{\omega\in S_{\mu,\lambda}}\Big\{ \lim_{n\rightarrow\infty}\Big( \|\xi_{n}-\omega\|_{2}+\|x_2(\xi_{n}-\omega)\|_{1}\Big) \Big\},\\
    \geq&\liminf_{n\rightarrow\infty}\Big(\inf_{\omega\in S_{\mu,\lambda}}\Big\{ \|\xi_{n}-\omega\|_{2}+\|x_2(\xi_{n}-\omega)\|_{1}\Big\} \Big),\\
    \geq&\epsilon_0.
\end{align*}
We obtained a contradiction.

\rightline{$\Box$}

\section{Discussion}\label{sec:discussion}

In the end, we discuss some generalizations of Theorem \ref{thm:stable wrt minimizer}, and state some open questions.

\subsection{More general domains.}
A natural question arises as to whether the stability result applies to more general domains. It would be valuable to identify optimal conditions on the domain that ensure both the weak continuity of kinetic energy and the existence of minimizers. This is particularly relevant for domains with even weaker volume conditions and small perturbations of the strip model. Examples of such domains include shapes defined by functions like \((\ln(2 + |x_1|))^{-1}\), \(1/x\), and \(e^{-x^2/2} + 1\), along with \(\partial \mathbb{R}^2_+\) as boundaries, among others. Their behavior is unpredictable due to the absence of technical tools, such as rearrangement, and the concentrated nature resulting from the decay of the domain itself.

\subsection{Free-boundary problems.}
Since the Class \ref{def:case Omega=D} is very general, for a free-boundary problem whose initial domain satisfies the decay assumption \eqref{def:weak_finite_volume_domain_assumption}, it is believed that within a certain time, the free-boundary domain $\D_t$ will still satisfy the condition \ref{def:weak_finite_volume_domain_assumption}. This holds particularly for the domains studied in Hu--Luo--Yao \cite{hu2024small}. 
A pertinent question arises regarding whether the stability issues discussed in this paper can be extended to the free-boundary case. This concept may be further explored by referencing the approximate Biot-Savart law proposed in \cite{hu2024small}. However, it is important to note that their study focuses on local properties, whereas stability studies necessitate a reliance on global considerations, which requires more advanced techniques.
%Therefore, a natural question is whether the stability issue discussed in this paper can be generalized to the free-boundary case. The general idea might be developed by referring to the approximate Biot-Savart law proposed in \cite{hu2024small}. However, the difference lies in that they study local properties, while stability studies need to rely on the global for discussion, which requires more techniques.

\subsection{Multiple patches and more study of minimizers.}

Our results extend the work of \cite{abe2022stability} from a single vortex patch to more general domains. A natural next step is to consider configurations with multiple patches, as studied for dipole pairs in \cite{choi2024stability}. However, extending our approach to dipole pairs presents two major difficulties.

First, the structure of the minimizer set $S_{\mu,\nu,\lambda}$ remains largely unknown for both domain classes $\D$ and $\S$. Consequently, a vortex evolving under the Euler dynamics might approach minimizers with different spatial structures over its lifespan, making stability analysis significantly more challenging.

Second, the method of \cite{choi2024stability} introduces an $L^2$-norm equality constraint, which allows one to reinterpret minimizers of the penalized energy as maximizers of the kinetic energy. Their method heavily relies on scaling invariance, a property that is absent in our domains, as discussed in Section \ref{sec: difficulty_loss_scaling}. Hence, a completely new approach is required to handle the multiple patch case, which is considerably more challenging.

Also, in \cite{abe2022stability}, the author figured out the formula of the minimizer by the moving plane method in Theorem 4.2 of \cite{fraenkel2000introduction}. Unfortunately, when studying similarly, our model can only achieve a general structure, as outlined in Theorem \ref{prop:general structure}. However, due to the lack of general symmetry and the specific characteristics of the domain, such as \(\R^2_+\), we are unable to derive more detailed properties. It may be possible to solve for additional useful properties by fixing a specific domain. For instance, we could explore a minimizer with compact supports (for weak finite volume domains), connected supports, boundedness, symmetry, and even an explicit formula. If we can achieve these results within this domain, we believe it may be possible to eliminate the \(L^1\) upper bound in Theorem \ref{thm:stable wrt minimizer}, similar to what was demonstrated in \cite{Abe2025StabilityOL}.
%Unfortunately, when studying similarly, our model can only obtain a general structure like the Theorem \ref{prop:general structure}, but because the domain itself loses the general symmetry and particularity like $\R^2_+$, it cannot obtain more properties. Perhaps after fixing a specific domain, it is possible to solve for more useful properties, such as a minimizer having compact supports (for weak finite volume domain), connected supports, boundedness, symmetry, and even an explicit formula. If these can be achieved in this domain, we believe that one can get rid of the $L^1$ upper bound in Theorem \ref{thm:stable wrt minimizer} as in \cite{Abe2025StabilityOL}.

\bibliographystyle{plain}
\bibliography{sample}

\end{document}